 \newtheorem{theorem}{Theorem}[section]
 \newtheorem{lemma}[theorem]{Lemma}
 \newtheorem{corollary}[theorem]{Corollary}
 \newtheorem{proposition}[theorem]{Proposition}
 \newtheorem{remark}[theorem]{Remark}
 \newcommand{\qed}{\nolinebreak{\hfill  $\Box$}}
 \newenvironment{proof}{ {\bf Proof: }}{\qed \bigskip\par} 
   \newcommand{\beq}{\begin{equation}}
   \newcommand{\eeq}{\end{equation}}
   \newcommand{\bey}{\begin{eqnarray}}
   \newcommand{\eey}{\end{eqnarray}}
   \newcommand{\Bey}{\begin{eqnarray*}}
   \newcommand{\Eey}{\end{eqnarray*}}
\newcommand{\cC}{{\cal C}}
\newcommand{\cF}{{\cal F}}
\newcommand{\cJ}{{\cal J}}
\newcommand{\cM}{{\cal M}}
\newcommand{\cS}{{\cal S}}
\newcommand{\cZ}{{\cal Z}}
\newcommand{\tR}{\tilde {R}}
\newcommand{\ga}{\gamma}
\newcommand{\al}{\alpha}
\newcommand{\be}{\beta}
\newcommand{\kap}{\kappa}
\newcommand{\ve}{\varepsilon}
\newcommand{\ti}{\tilde}
\newcommand{\U}{\Upsilon}
\newcommand{\na}{\nabla}
\newcommand{\real}{{\rm I\mkern-3.7mu R}}
\newcommand{\ind}{\mbox{\rm{\small 1}\kern-4pt1}}
\newcommand{\dts}{\displaystyle}
\newcommand{\newdot}{\mbox{$\raisebox{.5ex}{\large .}\,$}}
\newcommand{\Int}{\mathrm{int}}
\newcommand{\dom}{\mathrm{dom}}
\def\theequation{\arabic{section}.\arabic{equation}}
\title{\vspace{-2cm} A unifying view on the irreversible investment exercise boundary \\ in a stochastic, time-inhomogeneous capacity expansion problem}
\author{Maria B. Chiarolla \thanks{e-mail address: maria.chiarolla@unisalento.it}\\
\small{\em Dipartimento di Scienze dell'Economia, Campus Ecotekne, University of Salento, 73047 Lecce, Italy}  }
\begin{document}
\maketitle

\noindent {\footnotesize {\bf  Abstract.}
This paper devises a way to apply the Bank and El Karoui Representation Theorem to find the investment boundary of a rich stochastic, continuous time capacity expansion problem with irreversible investment on the finite time interval $[0, T]$, despite the presence of a state dependent scrap value associated with the production facility at the terminal time $T$. Standard variational methods are not feasible for the proposed singular stochastic control problem but it admits some first order conditions, complicated however by an extra, non integral term  involving the scrap value function and depending on the initial capacity $y$, which are solved by devising a way to apply the Representation Theorem. Such devise, new and of interest in its own right, provides the existence of the base capacity $l^{\star}_y(t)$, a positive level which the optimal investment process is shown to become active at. As far as we know the Representation Theorem has never been applied to this extent. In the special case of deterministic coefficients, under a further assumption specific to the scrap value case, a unifying view on the curve at which it is optimal to invest emerges: the base capacity equals the investment boundary ${\hat y}(t)$ obtained by variational methods.

}

\bigskip

\noindent {\footnotesize {\bf  Keywords:}
 irreversible investment; singular stochastic control; multivariable production function; investment exercise boundary; the Bank and El Karoui Representation Theorem; boundary integral equation}
\bigskip

\noindent {\footnotesize {\bf  MSC 2020 Subject Classification:} 91B38, 91B70, 93E20, 60G40, 60H25.}

\noindent {\footnotesize {\bf  JEL Classification:} C02, E22, G31.}

\section{Introduction}
This paper contributes to the literature on the applications of the Bank and El Karoui Representation Theorem by solving a rich stochastic, continuous time capacity expansion, with irreversible investment entering additively a time-inhomogeneous diffusion on the finite time interval $[0, T]$ and in the presence of a state dependent scrap value associated with the production facility at the terminal time $T$. Standard variational methods are not feasible to find the investment boundary of the proposed singular stochastic control model but the functional to be maximized admits a supergradient, hence the optimal control satisfies some first order conditions which are solved by devising a way to apply the Representation Theorem. Such devise is new in singular stochastic control and of interest in its own right. As far as we know the Representation Theorem has never been applied to this extent. 

The reader may find an extensive review about singular stochastic control of expanding capacity by optimally choosing the investment process in  \cite{CHi}, \cite{CH1}, among others. The classical approach is based on the study of the optimal  stopping problem naturally associated to the control problem, the so-called {\em optimal risk of not investing} (cf. for example \cite{BK}, \cite{CH1}). The optimal investment process is then usually obtained in terms of the left-continuous inverse of the optimal stopping time, and that can be done under quite general time-inhomogeneous dynamics allowing random, time dependent coefficients for the capacity process  and even, as in \cite{CHi}, multivariable production functions (cf. \cite{CHu}) often awkward to handle. However that approach nothing says about the free boundary ${\hat y} (t)$ at which it is optimal to invest. Usually, in order to obtain ${\hat y} (t)$, one must apply variational methods but for that coefficients and discount factor must be reasonably reconsidered, quite often taken constant. In some cases, under some extra conditions on the production function (e.g.  of  Cobb-Douglas type), the investment exercise boundary ${\hat y}(t)$ may be characterized by an integral equazion that often requires a priori continuity of the boundary itself. In general the investment boundary makes the strip $[0,T)\times (0, \infty)$ split into the {\em Continuation Region} $\Delta$, where it is not optimal to invest as the capital's replacement cost is strictly greater than the shadow value of installed capital,  and its complement, the {\em Stopping Region} $\Delta^c$, where it is optimal to invest instantaneously.  

This paper studies a model in which the levels of production capacity $C$, labor input $L$ (at current random wage rate $w(t)$) and operating capital $K$ (at current random interest rate $r(t)$) contribute to the firm's production and are optimally chosen in order to maximize the expected total discounted profits under the three-variable production function $R(C, L, K)$. 
The setting is borrowed by the equilibrium model in \cite{CHei}. $R(C, L, K)$ is handled by the corresponding reduced production function $\tR(C,w,r)$, given in terms of conjugate functions (cf. \cite{RTR}). The optimal capacity expansion problem is then the maximization of the expected total discounted profit plus scrap value, net of investment, 
\[
\cJ_{y}(\nu)\!=\! E\bigg\{\int_0^T \hspace{-0.2cm} e^{-\int_0^t \mu_F (u) du} \ti R(C^{y}(t;\nu), w(t), r(t))\, dt 
        + e^{- \int_0^T \mu_F (u) du}G(C^{y}(T;\nu))  - \int_{[0, T)} \hspace{-0.2cm}  e^{- \int_0^t \mu_F (u) du} \,d\nu(t) \bigg\}
\]
over the set of non-decreasing investment processes $\nu(t)$ entering the time-inhomogeneous dynamics of the capacity process $C^{y}$ starting at time zero from $y>0$,
\[
dC^{y}(t;\nu) = C^{y}(t;\nu) [-\mu_C(t)  dt + \sigma_C^{\top}(t) dW(t)] + f_C (t) d\nu(t),
\] 
where $\mu_C\geq 0, \sigma_C, f_C$  are bounded, measurable, adapted processes. 
It is evident that in such setting the classical variational approach is prohibitive. Instead, by exploiting the existence of a supergradient of the strict concave functional $\cJ_{y}(\nu)$, the optimal $\nu$ may be characterized by first order conditions which might look appealing to be solved by means of the Bank and El Karoui Representation Theorem.    

In essence the Representation Theorem (cf.  \cite{BElK}), applied to an optional processes $\{X(t), t\in [0, T ] \}$ with $X(T ) = 0$ and given a function $f(t, \xi)$ and a nonnegative optional random measure $\mu$,  provides the representation 
\beq\label{Rep in intro}
X(t) = E\Big\{\int_{(t,T]} f(s, \dts\sup_{t\leq u'<s} \xi^{\star} (u')) \,\mu(ds) \Big|\, {\cal F}_{t}\Big\}
\eeq
 in terms of a progressively measurable process $\xi^{\star}(t)$ to be determined. Its connection with some stochastic optimization problems was shown in \cite{BElK}, \cite{BF}.  A good review of applications may be found in \cite{ChF}. Application to a model of singular stochastic investment with inventory risk is more recently found in  \cite{BBess2018},  after the ealier  \cite{RS}. 
In a simplified version of the model in \cite{CHi}, the investment exercise boundary was studied in \cite{ChF} by applying  the Bank and El Karoui Representation Theorem to the first order conditions. The lack of a scrap value at the terminal time $T$ reduced the first order conditions to the conditional expectation of an integral, that is a form similar to  (\ref{Rep in intro}). It was proved the representation 
\beq\label{equation xi}
X(\tau) = E\Big\{ \int_{\tau}^T  e^{-\int_0^t \mu_F(u)\,du}C^o(t) R_C(\frac{C^o (\omega, t)}{-\dts\sup_{\tau\leq u'<t} \xi^{\star} (u')})  \,dt  \Big|\, {\cal F}_{\tau}\Big\}
\eeq
for the optional process $X(\omega,t):= e^{-\int_0^{t} \mu_F(r)\,dr}\frac{C^o(\omega, t)}{f_C(t)} \ind_{[0,T)} (t)$, 
in terms of a unique optional process $\xi^{\star} (t)$ (cf. \cite{ChF},  Lemma 4.1).  Hence the optimal investment process was obtained in terms of  $\xi^{\star}(t)$. Under the assumption of deterministic coefficients, \cite{ChF} managed to use some results in \cite{CHi} to show that the base capacity and the investment exercise boundary ${\hat y}(t)$ were the same, and therefore $l^{\star}(t)$ had to be deterministic and its integral equation could be used to characterize ${\hat y}(t)$ without any further assumption on it.

In our model the first order conditions do not have a form similar to (\ref{Rep in intro}), in fact the scrap value at the terminal time $T$ adds an extra, non integral term in the supergradient (cf. (\ref{supergradient})). Such term depends on the initial capacity $y$ through the scrap value function and it makes non trivial the application of the Representation Theorem and the existence of the unique solution of the corresponding integral equation, $l^{\star}_y(t)$, itself a function of $y$. 
We handle the presence of the scrap value  by suitably defining on $[0, \infty)$, rather than on $[0, T]$, the optional process $X(t)$, the function $f(t, \xi)$ and the optional random measure $\mu (dt)$. The method is quite involved while allowing quite general $R, G,$ and random coefficients and discount factor. 
Notice that sometimes, in much simpler contexts, the time interval is fictitiously extended to $\infty$ only in order to apply the Representation Theorem to a process $X$ that satisfies $X(\infty)=0$ but not $X(T)=0$ (see point 1. below (\ref{BElK general equation xi})), as is the case with the discounted stock price $e^{-rt} P_t$, $t \in [0, T]$, in \cite{BF}, Corollary 2.4. There, with $f(t, x)= -x$ and $\mu(dt)= r  e^{-rt} dt$, the process $ e^{-rs} P_{s\wedge {T}} $ is written as $\int_{(s, \infty]} r e^{-rt} P_{s\wedge {T}}\,  dt$ for $ s \in [0, \infty)$, and it is $0$ for $s=\infty$. 

For each initial capacity $y$, we solve the optimal capacity expansion problem by obtaining the optimal investment process in terms of the base capacity $\l^{\star}_y(t)$, a process linked to the unique solution $\xi^{\star}_y (t)$ of the integral equation given by the Representation Theorem. In fact, the base capacity is a positive level  depending on $y$ which the optimal investment process is shown to become active at. In the special case of deterministic coefficients, discount factor, conversion factor, wage rate and interest rate, we prove that a unifying view on the curve at which it is optimal to invest emerges; in fact, under a further assumption specific to the scrap value case, the base capacity equals the investment boundary ${\hat y}(t)$ obtained by variational methods, and hence it is deterministic. The advantage is that the integral equation of the base capacity may then be used to characterize ${\hat y}(t)$, whatever ``cost'' functions $w(t)$ and $r(t)$ one chooses, and without appealing to {\sc PDE} methods that could be difficult to use in complex settings.
The extra assumption allows to obtain monotonicity and strict positiveness of ${\hat y}(t)$ by means of probabilistic methods. As a byproduct of monotonicity, continuity of the optimal investment process is also obtained, despite the lack of knowledge of continuity of ${\hat y}(t)$. Such result is of interest in singular stochastic control under time-inhomogeneous dynamics.  Obviously the optimal investment process is continuous if the boundary is so, and that is the case for most models in the literature which are restricted to geometric dynamics with constant coefficients  or, more generally, limited to time-homogeneous diffusions.  

This paper is organized as follows.  
In Section~\ref{model} we set the model. 
In Section~\ref{Bank-El Karoui approach} we solve the first order conditions for optimality via the Bank and El Karoui Representation Theorem and  we obtain the optimal investment  process in terms of  the base capacity $\l^{\star}_y(t)$. 
In Section~\ref{investment exercise boundary} the special case of deterministic coefficients is considered.  The investment exercise boundary ${\hat y}(t)$ is then given by variational methods. Under a further assumption specific to the scrap value case ${\hat y}(t)$ is shown to be monotone and strictly positive on $[0,T)$. As a byproduct, the continuity of the optimal investment process is also obtained. In Section~\ref{unifying views} the above properties of the boundary are used to get a unifying view about the curve at which it is optimal to invest, in fact the base capacity is shown to coincide with the investment exercise boundary. Appendix~\ref{variational approach to free boundary} completes the paper with a brief review of the variational approach in \cite{CHi} generalized to the setting of Section~\ref{investment exercise boundary}.


\section{The Model}
\label{model}

The setting is that of the equilibrium model in \cite{CHei}. On a complete probability space $(\Omega,{\cF}, P)$ with filtration $\{ {\cF}_t: t\in [0,T] \}$ given by the usual augmentation of the filtration generated by an exogenous $n$-dimensional Brownian  motion $\{W(t): t\in [0,T] \}$, a production facility (``the firm'') with finite horizon $T$ 
 produces output at rate $R(C, L, K)$ when it has capacity $C$, employs $L$ units of labour and borrows $K$ units of operating capital at current {\em wages} $w$ and {\em interest rate} $r$, respectively. 
The capital invested in technology on the time interval $[0,t]$ to increase capacity is denoted $\nu(t)$, it is given by a process almost surely finite, left-continuous with right limits (``lcrl''), non-decreasing and adapted. The non-decreasing nature of $\nu$ expresses the irreversibility of investment.  

The three-variable production function $R(C,L, K)$ is  defined on all of $\real^3$ for convenience,  but it may take the value $-\infty$. It is finite on $\dom(R):=\{(C,L,K)^\top:R(C,L,K)>-\infty\}$. The $C$-{\em section} of $\dom(R)$ is defined by $\dom(R(C)):=\{(L,K)^\top:R(C,L,K)>-\infty\}$. We set $\na_{L,K} R(C,L,K):=(R_L(C,L,K), R_K(C,L,K))^\top$ where $R_X$ denotes the partial derivative with respect to $X$.
We denote $\real^n_{++}$  the positive orthant in $\real^n$, $\real^n_+$ its closure (i.e. the non-negative orthant). 
Let $\kap_L$ and  $\kap_K$ denote respectively the labour supply and the money, supply, and let $\kap_{w}, \kap_{r}$ be given constants. Then we make the following\\
{\bf Assumption-[R]} \vspace{-0.2cm}
\beq\label{AssR}
\left\{\begin{array}{rl}
\mathrm{(i)~}&R:\real^3\mapsto [-\infty,\infty)\mathrm{~ is~upper~semicontinuous,~concave~and~non-decreasing};\\ 
\mathrm{(ii)~}& \real^3_+\subset\dom(R),\ R(\real^3_+)\subset[0,\infty),\ R \in C(\real^3_+);  \\
\mathrm{(iii)~}& R \in C^2(\Int(\dom(R)));\\
\mathrm{(iv)~}& R \mathrm{~is~strictly~concave~and~strictly~increasing~on~}\\
                       & \hspace{0.5cm}  \Int(\dom(R))\cap(\real_+\times [0,\kap_L]\times[0,\kap_K]);\\
\mathrm{(v)~}& \lim_{C\rightarrow\infty}\inf_{(L,K)\in [0,\kap_L]\times[0,\kap_K]}R_C(C,L,K)=0.
\end{array}\right.
\eeq
{\bf Assumption-[LK]}  \vspace{-0.2cm}
\beq\label{condLK}
~~~~~ w, r: \Omega\times [0,T] \rightarrow \real \mathrm{~are~continuous~a.s.} \mathrm{~with~} 0<k_w\leq w (t)\leq \kap_{w} ~and~
0< k_r \leq r (t)\leq \kap_{r}.
\eeq
Set $A :=[0, \kap_L]\times [0, \kap_K]$ and $\ti A :=[0, \infty) \times A$. Let $R^A(C,\cdot, \cdot)$ denote $R(C,\cdot, \cdot)$ modified as $-\infty$ off $ A$. 
The manager of the firm chooses labour $L$ and operating capital $K$ in order to maximize at each moment in time the production profits $R(C, L, K) - w L - r K$, at current capacity $C$, wages $w$ and interest rate $r$.  
Therefore the maximal production profit rate is given by the ``reduced production function'',
\beq\label{R*maxprofit}
\tR(C, w, r):=\max_{(L,K)\in A} ~[R(C, L, K) - w L- r K]. 
\eeq
Notice that, for fixed $C$,  $\tR(C,w, r)$ is the negative of the concave conjugate of $R^A(C,\cdot, \cdot)$ (cf. \cite{RTR}), hence it is convex in $w$ and in  $r$, and  strictly concave in $C$  (by a generalization of Proposition~5.1 in the Appendix of \cite{CHi}). 
As in \cite{CHi}, Section~2 (with $L$ replaced by $(L,K)$) a unique solution exists and it is denoted by 
$$(L^C(w,r), K^C(w,r))^\top:=I^{R^A (C,\newdot,\newdot)}(w,r)$$ with $\top$ denoting ``transpose'' and 
$I^{R^A(C,\newdot,\newdot)}$  being an extension of the inverse of $\na_{L,K} R^A (C,\newdot,\newdot)$  (cf. \cite{CHu}, Proposition~3.2).
\begin{remark}\label{growth condition}\rm
The growth condition 
$\sup_{C\geq 0}\;\max_{ [0,\kap_{w}]\times [0,\kap_{r}] }\{\ti R(C,w,r) -\ve C\}
=\kap_{\ve} <\infty$ 
holds with $\kap_{\ve}$ depending on $\kap_L$, $\kap_{w}$, $\kap_K$, $\kap_{r}$ and $\ve$. It follows by \cite{CHu}, Proposition~3.3, thanks to Assumption-[R] (see also \cite{CHi}). It is needed to obtain Proposition \ref{cJprop} below. 
{\qed \par}\end{remark}
The capacity process $C^{y}(t;\nu)$ starting at time zero from level $y>0$ under the control $\nu$ is given by
\beq
\label{C.eq starting at 0}
\left\{
\matrix{
dC^{y}(t;\nu) = C^{y}(t;\nu) [-\mu_C(t)  dt + \sigma_C^{\top}(t) dW(t)] + f_C (t) d\nu(t),              
\qquad t\in (0, T], \hfill\cr
C^{y}(0;\nu) =y> 0, \hfill\cr
}
\right.
\eeq
where $f_C$ is a conversion factor as one unit of investment is converted into $f_C$ units of production capacity,
and $\nu\in {\cal S}$ with 
\[
{\cal S} := \{ \nu: [0,T] \rightarrow \real: \mbox{\rm ~non-decreasing, lcrl, adapted  process with ~} \nu(0)=0 \mbox{\rm ~ a.s.} \}, 
\]
a convex set. For the coefficients it holds the following\\
{\bf Assumption-[C]} \vspace{-0.2cm}
\beq\label{AssC}
\left\{\begin{array}{rl}
\mathrm{(i)~}&\mu_C\geq 0, \sigma_C, f_C \mathrm{~are~bounded,~measurable,~ adapted~processes~on~} [0,T];\\ 
\mathrm{(ii)~}& f_C \mathrm{~is~continuous~with~} 0<k_f\leq f_C\leq \kap_f. 
\end{array}\right.
\eeq
In the absence of investment, the decay of a unit of initial capital is 
\beq
\label{C^o.eq}
C^o(t):=C^{1}(t;0) = e^{-\int_0^t \mu_C(u)du} \cM_0 (t),
\eeq
with  
\beq
\label{martingale}
\cM_s (t) 
= e^{[\int_s^t\sigma_C^{\top}(u)\, dW(u) - \frac{1}{2} \int_s^t \| \sigma_C(u)\|^2 du]}, \qquad t \in [s,T], 
\eeq
exponential martingale defined for each fixed $s \in [0,T]$ and such that $E\{[\cM_s (t)]^p\} < \infty$ for any $p$. 
Then the solution of (\ref{C.eq starting at 0}) is written as
\beq
\label{C solution}
C^{y}(t;\nu) =C^o(t)\Big[y + \int_{[0,t)}\frac{f_C(u)}{C^o(u)}\,d\nu(u)\Big] =C^o(t)[y + {\overline\nu}(t)], 
\eeq
with ${\overline\nu}(t) := \int_{[0,t)}\frac{f_C(u)}{C^o(u)}\,d\nu(u)$. 


The firm has a finite planning horizon $T$ and at that time  its value is $G(C^{y}(T;\nu))$, the so-called {\em scrap value} of the firm. For $G$ it holds the following \\
{\bf Assumption-[G]} \vspace{-0.2cm}
\beq\label{condG}
\left\{\begin{array}{rl}
\mathrm{(i)~}&G:\real_+\mapsto\real_+ \mathrm{~is~concave,~non-decreasing,~continuously~ differentiable};\\
\mathrm{(ii)~}&
\lim_{C\rightarrow\infty}G'(C)=0,\qquad
  G'(0)f_C(T)\leq 1\mathrm{\rm ~a.s.}.\\
\end{array}\right.
\eeq
An alternative to the limit condition in (ii) above is:
$G(C)\leq a_o+b_o C,\ b_o\kap_f<1,\ a_o,b_o\geq 0.$

Over the finite planning horizon the firm chooses the investment process $\nu \in \cS$
in order to maximize the expected total discounted profit plus scrap value, net of investment, 
\begin{eqnarray}
\label{J}
& & 
\cJ_{y}(\nu)= E\bigg\{\int_0^T  e^{-\int_0^t \mu_F (u) du} \ti R(C^{y}(t;\nu), w(t), r(t))\, dt 
        + e^{- \int_0^T \mu_F (u) du}G(C^{y}(T;\nu)) \\
&  & \hspace{2.1cm}  - \int_{[0, T)}  e^{- \int_0^t \mu_F (u) du} \,d\nu(t) \bigg\}. \nonumber
\end{eqnarray}
Here  the firm discount rate $\mu_F$ 
is assumed to be uniformly bounded in $(t,\omega)$, nonnegative, measurable, adapted with $\bar\mu:=\mu_C+\mu_F\geq\ve_o>0 $ a.s. 
Note that  $(t,\omega)\mapsto \tR(C^{y}(t;\nu), w(t), r(t))$ is measurable due to the continuity of $\tR(C, w, r)$. 
The firm's optimal capacity expansion problem is  then
\beq\label{cap prob from 0}
V(y) := \dts{\max_{\nu \in {\cal S}}}\ \cJ_{y}(\nu). 
\eeq 
Problem (\ref{cap prob from 0}) is a modification of the ``social planning problem" discussed by Baldursson and Karatzas \cite{BK}. Its solution may be found in terms of the left-continuous inverse of the optimal stopping time  of the associated optimal stopping problem known as ``the optimal risk of not investing'' as in \cite{CHi}, Section 3.  In fact \cite{CHi} generalizes the results in \cite{CH1}.
Here the unboundness of the reduced production function $\ti R$ is handled by the following estimates which allow to use some results in \cite{BK}. Its proof is as \cite{CHi}, Proposition~2.1.
\begin{proposition}\label{cJprop}
There exists a constant $K_\cJ$  depending on $T, \kap_L, \kap_w, \kap_K, \kap_r, \kap_f, k_f$ only such that
\[ 
\begin{array}{ll}
(a)& \cJ_{y}(\nu)\leq K_\cJ(1+y)\,\mbox{\rm ~on~} \cS,\\
& \\
(b)& {\dts E\bigg\{\int_{[0,T)} e^{-\int_0^t\mu_F(u)\,du}\,d\nu(t)\bigg\}}\leq 2K_\cJ(1+y) \mbox{\rm ~~~if~~}\cJ_{y}(\nu)\geq 0.
\end{array}
\]
\end{proposition}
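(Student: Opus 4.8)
The plan is to establish both bounds by direct estimation, exploiting the growth condition on $\tR$ from Remark~\ref{growth condition}, the concavity/boundedness assumptions on $G$ from Assumption-[G], and the exponential bound $\bar\mu\geq\ve_o>0$ on the combined discount rate. First I would fix $\ve=\ve_o/2$ (or any small fixed $\ve$) in the growth condition and write, for each $t$,
\[
e^{-\int_0^t\mu_F(u)du}\,\tR(C^y(t;\nu),w(t),r(t))\;\leq\; e^{-\int_0^t\mu_F(u)du}\big(\kap_\ve+\ve\,C^y(t;\nu)\big),
\]
and similarly bound the scrap value using the linear-growth alternative $G(C)\leq a_o+b_o C$ (with $b_o\kap_f<1$) from Assumption-[G]. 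The key is then to control $E[e^{-\int_0^t\bar\mu(u)du}C^o(t)]$ and the integral $E\big[\int_{[0,t)}e^{-\int_0^t\bar\mu\,du}\frac{C^o(t)}{C^o(u)}f_C(u)\,d\nu(u)\big]$. Using representation (\ref{C solution}), $e^{-\int_0^t\mu_F\,du}C^y(t;\nu)=e^{-\int_0^t\bar\mu(u)du}\cM_0(t)\,[y+\overline\nu(t)]$; since $\cM_0(t)$ is a martingale with $E[\cM_0(t)]=1$, and $e^{-\int_0^t\bar\mu(u)du}\leq e^{-\ve_o t}$, an application of Fubini and the tower property should yield a bound of the form $\mathrm{const}\cdot(1+y)$ for the $y$-part and a bound involving $E\big[\int_{[0,T)}e^{-\int_0^u\bar\mu\,dr}f_C(u)\,d\nu(u)\big]$ — which by $f_C\leq\kap_f$ and $\mu_F\leq\bar\mu$ is at most $\kap_f\,E\big[\int_{[0,T)}e^{-\int_0^u\mu_F\,dr}d\nu(u)\big]$, i.e. proportional to the investment cost itself.

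For part (a): collecting the three pieces, $\cJ_y(\nu)\leq K_1(1+y)+(b_o\kap_f+\ve\cdot\mathrm{const})\,E\big[\int_{[0,T)}e^{-\int_0^u\mu_F\,dr}d\nu(u)\big]-E\big[\int_{[0,T)}e^{-\int_0^u\mu_F\,dr}d\nu(u)\big]$. The point is that the coefficient multiplying the investment integral is $b_o\kap_f+O(\ve)-1$, which is strictly negative for $\ve$ small enough since $b_o\kap_f<1$; hence the investment integral can be dropped (it only decreases the bound), giving $\cJ_y(\nu)\leq K_\cJ(1+y)$ with $K_\cJ$ depending only on the listed constants.

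For part (b): rearrange the same inequality. If $\cJ_y(\nu)\geq 0$, then
\[
(1-b_o\kap_f-c\ve)\,E\Big[\int_{[0,T)}e^{-\int_0^t\mu_F(u)du}\,d\nu(t)\Big]\;\leq\;K_1(1+y),
\]
and choosing $\ve$ so that $1-b_o\kap_f-c\ve\geq\tfrac12(1-b_o\kap_f)$ (hence $\geq$ a fixed positive constant) yields the bound $2K_\cJ(1+y)$ after absorbing the resulting constant into $K_\cJ$. The main obstacle I anticipate is not conceptual but bookkeeping: one must be careful that the constant $K_\cJ$ genuinely depends only on $T,\kap_L,\kap_w,\kap_K,\kap_r,\kap_f,k_f$ and not on $\ve_o$ or the bound on $\mu_F$ — this is handled by simply bounding $e^{-\int_0^t\bar\mu\,du}\leq 1$ rather than using $\ve_o$, so no smallness of $\bar\mu$ is needed, only $\mu_F\leq\bar\mu$ and $\mu_F,\mu_C\geq 0$. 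The other delicate point is that the left-continuity of $\nu$ and the half-open intervals $[0,t)$ must be tracked consistently through the Fubini interchange, exactly as in \cite{CHi}, Proposition~2.1, whose argument carries over verbatim once $L$ is replaced by $(L,K)$ and the conversion factor $f_C$ is inserted.
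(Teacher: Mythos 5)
Your argument is correct and is essentially the paper's own: the paper proves this proposition simply by invoking the argument of \cite{CHi}, Proposition~2.1, which is exactly what you reconstruct (growth bound $\tR\leq\kap_\ve+\ve C$, linear bound on $G$ with $b_o\kap_f<1$, the representation $C^{y}(t;\nu)=C^o(t)[y+\overline\nu(t)]$ with martingale/tower estimates reducing the capacity terms to $y$ plus $\kap_f E\{\int_{[0,T)}e^{-\int_0^u\mu_F}d\nu\}$, then choosing $\ve$ small so the net coefficient of the investment integral is negative, which yields (a) by dropping it and (b) by rearranging). The only blemishes are cosmetic (e.g.\ writing $e^{-\int_0^t\bar\mu}$ where $e^{-\int_0^t\mu_F}$ belongs in the cross term), and you correctly flag the one genuinely delicate point, the projection/Fubini step inside $d\nu$, deferring to \cite{CHi} just as the paper does.
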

Notice that $\tR$ strictly concave in $C$, $G$ concave, and $C$ affine in $\nu$ imply that $\cJ_{y}$ is concave on $\cS$; in fact it is strictly concave since  $0<k_f\leq f_C$. 
The strict concavity of $\cJ_{y}(\nu)$ has a double implication. First, it implies that the solution of the optimal capacity expansion problem (\ref{cap prob from 0}) is unique, if it exists. If we denote it by $\hat\nu$, then the corresponding unique solution of (\ref{R*maxprofit}) is given by
\beq\label{hKL}
({\hat L}(t),{\hat K}(t))^\top := (L^{C^{y}(t;\hat\nu)}(w(t), r(t)), K^{C^{y}(t;\hat\nu)}(w(t), r(t)))\\
= I^{R^A (C^{y}(t;\hat\nu),\newdot,\newdot)}(w(t), r(t)). \nonumber
\eeq
Secondly, the functional $\cJ_{y}$ admits the supergradient (\ref{supergradient}) in the next section.
\begin{remark}\label{Appendix of [5]}\rm
Notice that there is a simpler way to calculate $\tR_C(y C^s(u), w(u), r(u))$. In fact, a generalization of Proposition 5.1 in the Appendix of \cite{CHi} (based on the results in \cite{CHu}) implies that
\beq\label{formula Appendix of[8]}
\tR_C(C, w, r) = R_C (C, I^{R^A (C,\newdot,\newdot)}(w, r)),
\eeq
for $w$ and $r$ fixed, since $[\nabla_{L,K} R(C, I^{R^A (C,\newdot,\newdot)}(w, r)) - (w, r)^{\top}]^{\top} \frac{\partial}{\partial C}I^{R^A (C,\newdot,\newdot)}(w, r) = 0$. For example, if $R$ is of the Cobb-Douglas type with zero shift, i.e. $R(C,L,K)=\frac{1}{\al\be\ga}C^\al L^\be K^\ga$ with $\al,\be,\ga>0$ and $\al+\be+\ga<1$, then 
\[
\tR_C(C, w(t), r(t))=\left[\frac{1}{\beta \gamma}\left(\frac{\beta}{\alpha \, w(t)}\right)^{\beta}\left(\frac{\gamma}{\alpha \, r(t)}\right)^{\gamma} C^{\alpha+\beta+\gamma -1}\right]^{\frac{1}{1-\beta-\gamma}}.
\]
{\qed \par}\end{remark}

\section{Solving by the Bank-El Karoui Representation Theorem approach}
\label{Bank-El Karoui approach}
To solve the optimal capacity expansion (\ref{cap prob from 0}) by finding the curve at which it is optimal to invest is not an easy task. The underline dynamics has random time-depending coefficients and the production function is also a function of the stochastic processes $w (t), r(t)$. So variational methods are precluded and we must take a different approach. 

We look for some first order conditions. The strict concavity of $\cJ_{y}(\nu)$ implies the existence of the supergradient,
\bey\label{supergradient}
& & 
\nabla_{\nu} \cJ_{y} (\nu)(\tau) = \frac{f_C(\tau)}{C^o(\tau)} E\bigg\{ \int_{\tau}^T  e^{-\int_0^t \mu_F(u)\,du}C^o(t) \tR_C(C^{y}(t;\nu), w(t), r(t))  \,dt \\
&  & \hspace{4cm} +  e^{-\int_0^T \mu_F(u)\,du} C^o(T) \, G'(\frac{C^{y}(T;\nu)}{\ind_{[0,T)} (\tau)})  \,\bigg|\,\cF_{\tau}\bigg\} 
- e^{-\int_0^{\tau} \mu_F(u)\,du} \ind_{[0,T)} (\tau) \nonumber
\eey
for all $\{{\cal F}_{t}\}_{t\in [0,T]}$-stopping times $\tau : \Omega \rightarrow [0,T]$ and $\omega\in \Omega$ such that $\tau(\omega) <T$ (recall that $G'(\infty)=0$). 
Then optimality is characterized through first-order conditions.  

Denote by $\U[0,T]$ the set of all $\{{\cal F}_{t}\}_{t\in [0,T]}$-stopping times taking values in $[0,T]$.  A generalization of the proof of \cite{ChF}, Theorem 3.2, that takes into account the presence of the scrap value at the terminal time $T$ in the present model, that is the presence of the non integral term involving $G'$  and the initial capacity $y$ in the conditional expectation appearing in the supergradient,  gives 
\begin{proposition}\label{First Order Cond}
The control $\hat\nu \in \cS$ is optimal for the capacity problem {\rm (\ref{cap prob from 0})} if and only if it satisfies the first-order conditions 
\beq\label{FOC}
\left\{
\matrix{
\nabla_{\nu} \cJ_{y} (\hat\nu)(\tau) \leq 0 \quad\mbox{a.s.~~~for~} \tau\in \U[0,T], \vspace{0.2cm}\hfill\cr
E\bigg\{\dts\int_{[0,T)}\nabla_{\nu} \cJ_{y} (\hat\nu)(t) \, d \hat\nu (t)\bigg\} = 0. \hfill\cr
}
\right.
\eeq
\end{proposition}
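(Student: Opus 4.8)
The plan is to establish Proposition~\ref{First Order Cond} as the standard first-order characterization of a maximizer of a concave functional over a convex cone, using the supergradient (\ref{supergradient}). The key point is that $\cS$ is a convex set, closed under the operations $\nu\mapsto \nu+\epsilon(\bar\nu-\nu)$ for $\bar\nu\in\cS$, $\epsilon\in[0,1]$, and that $\cJ_y$ is concave (indeed strictly concave) on $\cS$ with a well-defined directional derivative expressible through $\nabla_\nu\cJ_y(\nu)(\cdot)$.

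\emph{Sufficiency.} First I would assume $\hat\nu$ satisfies (\ref{FOC}) and show it is optimal. For any $\nu\in\cS$, concavity of $\cJ_y$ gives $\cJ_y(\nu)-\cJ_y(\hat\nu)\leq \langle \nabla_\nu\cJ_y(\hat\nu),\,\nu-\hat\nu\rangle$, where the pairing is the expectation of the integral of $\nabla_\nu\cJ_y(\hat\nu)(t)$ against $d(\nu-\hat\nu)(t)$ over $[0,T)$. One writes this as $E\{\int_{[0,T)}\nabla_\nu\cJ_y(\hat\nu)(t)\,d\nu(t)\} - E\{\int_{[0,T)}\nabla_\nu\cJ_y(\hat\nu)(t)\,d\hat\nu(t)\}$. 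The second term vanishes by the second condition in (\ref{FOC}); the first is $\leq 0$ because $\nabla_\nu\cJ_y(\hat\nu)(t)\leq 0$ a.s.\ (first condition) and $\nu$ is non-decreasing, so its Stieltjes measure is nonnegative --- here one must justify that the pointwise a.s.\ inequality over stopping times upgrades to the measure-theoretic inequality against $d\nu$, which follows from an optional-section-type argument (the integrand is an optional process dominated pointwise along all stopping times). Hence $\cJ_y(\nu)\leq\cJ_y(\hat\nu)$ for all $\nu\in\cS$.

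\emph{Necessity.} Conversely, suppose $\hat\nu$ is optimal. For $\bar\nu\in\cS$ and $\epsilon\in(0,1]$, set $\nu_\epsilon=\hat\nu+\epsilon(\bar\nu-\hat\nu)\in\cS$. Optimality gives $\cJ_y(\nu_\epsilon)-\cJ_y(\hat\nu)\leq 0$; dividing by $\epsilon$ and letting $\epsilon\downarrow 0$, concavity makes the difference quotient monotone, and one identifies the limit as $\langle\nabla_\nu\cJ_y(\hat\nu),\,\bar\nu-\hat\nu\rangle\leq 0$ (this is where one needs dominated convergence, controlled by Proposition~\ref{cJprop} and the growth condition in Remark~\ref{growth condition}, together with continuity/differentiability of $\tR_C$ and $G'$ in $C$). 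Taking $\bar\nu=\hat\nu+\delta_{\{\tau\}}$-type perturbations --- i.e.\ adding a small jump of size $1$ at a stopping time $\tau<T$ (suitably truncated to stay in $\cS$) --- isolates $E\{\nabla_\nu\cJ_y(\hat\nu)(\tau)\}\leq 0$ for every $\tau\in\U[0,T]$, and since this holds for all such $\tau$ one gets $\nabla_\nu\cJ_y(\hat\nu)(\tau)\leq 0$ a.s. Taking $\bar\nu=0$ (the zero control, which is in $\cS$) gives $-E\{\int_{[0,T)}\nabla_\nu\cJ_y(\hat\nu)(t)\,d\hat\nu(t)\}\leq 0$, hence $\geq 0$; combined with the already-proved pointwise nonpositivity of the integrand (and nonnegativity of $d\hat\nu$), the integral is both $\leq 0$ and $\geq 0$, so it equals $0$, which is the complementarity condition.

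\emph{Main obstacle.} The delicate part is the passage between the \emph{a.s.-for-each-stopping-time} statement and the \emph{pathwise/measure} statement --- i.e.\ that $\nabla_\nu\cJ_y(\hat\nu)(t)\leq 0$ holding a.s.\ for every $\tau\in\U[0,T]$ is equivalent to this holding $dP\otimes d\nu$-a.e.\ for the relevant controls, which rests on the optional projection and the optional section theorem and requires $\nabla_\nu\cJ_y(\hat\nu)(\cdot)$ to be (a version of) an optional process; handling the terminal non-integral term $-e^{-\int_0^\tau\mu_F\,du}\ind_{[0,T)}(\tau)$ and the scrap-value term $G'(C^y(T;\hat\nu)/\ind_{[0,T)}(\tau))$ inside this framework (in particular the behavior as $\tau\uparrow T$, using $G'(\infty)=0$) needs care. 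The integrability needed to differentiate under the expectation near $t=T$ — bounding $\tR_C$ and $G'$ uniformly — is the second technical point, but it is supplied by Assumptions [R], [G] and Proposition~\ref{cJprop}. Since the corresponding result in \cite{ChF}, Theorem~3.2, is proved by the same scheme, I would follow that argument, inserting the scrap-value term where the non-integral contribution appears.
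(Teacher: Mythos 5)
Your proposal is correct and follows essentially the same route the paper intends: the paper gives no independent proof but defers to the convex-analytic argument of \cite{ChF}, Theorem 3.2 (sufficiency via the supergradient inequality, necessity via directional perturbations $\hat\nu+\epsilon(\bar\nu-\hat\nu)$ and unit increments after a stopping time, with the optional section/projection argument handling the passage from statements at stopping times to $dP\otimes d\nu$ statements), with the scrap-value term inserted exactly as you describe. The only step worth making explicit is the localization trick upgrading $E\{\nabla_\nu\cJ_y(\hat\nu)(\tau)\}\leq 0$ for all $\tau$ to the a.s.\ pointwise inequality (replace $\tau$ by $T$ on the event where the supergradient is positive, using $\nabla_\nu\cJ_y(\hat\nu)(T)=0$), which you gloss but which is standard.
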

In particular,  
 (\ref{FOC})$_2$ says that $\nabla_{\nu} \cJ_{y} (\hat\nu)(t)$ is zero at times $t$ of strict increase for 
$\hat\nu$, whereas by (\ref{FOC})$_1$ for any $\tau\in \U[0,T]$,
$E\Big\{\int_{\tau}^T  e^{-\int_0^t \mu_F(u)\,du}C^o(t) \tR_C(C^{y}(t;\hat\nu), w(t), r(t))  dt 
+  e^{-\int_0^T \mu_F(u)\,du} C^o(T) \, G'(\frac{C^{y}(T;\hat\nu)}{\ind_{[0,T)} (\tau)})  \Big|\,\cF_{\tau}\Big\} $ 
is lesser or equal to $e^{-\int_0^{\tau} \mu_F(u)\,du}  \frac{C^o(\tau)}{f_C(\tau)} \ind_{[0,T)} (\tau)$ a.s.

We aim to solve (\ref{FOC}) by means of the Bank-El Karoui Representation Theorem (cf. \cite{BElK}, Theorem 3). This celebrated Theorem, for $T$ not necessarily finite, guarantees the representation 
\beq\label{BElK general equation xi}
X(\tau) = E\Big\{\int_{(\tau, T]} f(t, \dts\sup_{\tau\leq u'<t} \xi^{\star} (u')) \,\mu(dt) \Big|\, {\cal F}_{\tau}\Big\},
\quad \tau \in \U[0,T],
\eeq
in terms of a unique optional process $\xi^{\star}$,  given
\vspace{-0.5cm}\\
\begin{enumerate}
\item $X(\omega,t) :\Omega \times [0,T] \rightarrow \real$ an optional process  of class (D) (i.e. $\sup_{\tau\in \U[0,T]} E\{X(\tau)\} <+\infty$), lower-semicontinuous in expectation (i.e. for any sequence of stopping times $\tau_n \nearrow \tau$ a.s. for some $\tau \in \U[0,T]$, it holds $\liminf_n E\{X(\tau_n)\} \geq E\{X(\tau)\}$) and such that $X(T) =0$, \vspace{-0.5cm}\\
\item $\mu(\omega, dt)$ a  nonnegative optional random Borel measure, \vspace{-0.5cm}\\
\item $f(\omega, t, x) : \Omega\times [0,T]\times \real \rightarrow \real$ a random field such that \vspace{-0.2cm}
\begin{description}
\item[(i)]  for any $x \in \real$, the stochastic process $(\omega, t) \rightarrow f(\omega, t, x)$ is progressively measurable and belonging to $L^1(P(d\omega) \otimes \mu(\omega, dt))$, \vspace{-0.5cm}\\
\item[(ii)]  for any $(\omega, t) \in \Omega \times [0,T]$, the mapping $x \rightarrow f(\omega, t, x)$ is continuous and strictly decreasing from $+\infty$ to $-\infty$.\vspace{-0.5cm}\\
\end{description}
\end{enumerate}
Moreover, the unique optional process $\xi^{\star}$ is found in terms of the optimal stopping problem, for $\xi <0$ and $t\in [0,T]$,  
\beq\label{Gamma standard}
\Gamma^{\xi} (t) :=\mbox{ess}\dts\inf_{\hspace{-0.5cm} t\leq\tau\leq T} E\Big\{\dts\int_t^{\tau} f(u, \xi) \, \mu(du) + X(\tau) \Big|\, {\cal F}_t\Big\}. 
\eeq
In fact, an application of  \cite{BElK}, Lemma 4.12,  implies  the continuity of the mapping $\xi \rightarrow \Gamma^{\xi} (\omega, t)$ for any fixed $(\omega, t)\in \Omega \times [0,T]$,  its non-increasing property from $\Gamma^{-\infty} (t) := \lim_{\xi \downarrow -\infty}\Gamma^{\xi} (t) = X(t)$, and the optimality of the stopping time 
\beq\label{equation tau xi}
\tau^{\xi}(t) := \inf\{u\in [t, T) : \Gamma^{\xi} (u) = X(u)\}\wedge T, \qquad t\in [0, T),
\eeq
for $\Gamma^{\xi}(t)$,  which may be taken right-continuous in $t$. 
Then the Bank-El Karoui Representation Theorem  (cf. \cite{BElK}, equation (23) and Lemma 4.13) showes that the solution $\xi^{\star}$ is the optional process 
\beq\label{optional solution}
\xi^{\star}(t) := \sup\{\xi <0: \Gamma^{\xi} (t) = X(t)\}, \qquad t\in [s, T),
\eeq
which is also uniquely determined up to optional sections if shown to be upper right-continuous \`{a} la Bourbaki (cf. \cite{BElK}, Theorem 1), 
 i.e. if  $\xi^{\star}(s) = \dts\lim_{\ve\downarrow 0} \dts\sup_{u\in[s, (s+ \ve)\wedge T]} \xi^{\star}(u)$ 
 (such limit is greater than or equal to what is commonly called limit superior, in fact it is its upper envelope).  

Notice that in the no scrap value model  of \cite{ChF} the representation (\ref{equation xi}) was obtained thanks also to the 
Inada condition holding for the production function $R$ that  allowed to satisfy point $3_{\rm (ii)}$ above. 
There $X$ and $\xi^{\star}$ were independent of the initial capacity $y$ as the first order conditions did not contain the $G'$ term. Also $\Gamma^{\xi}$ was independent of $y$, and  it was non-increasing from $\Gamma^{-\infty} (t) := \lim_{\xi \downarrow -\infty}\Gamma^{\xi} (t) = X(t)$ again by the Inada condition. In the present model too we shall need to assume some sort of  Inada condition. 
We make the further \\
{\bf Assumption-[I]} \vspace{-0.2cm}
\beq\label{Inada}
\left\{\matrix{ 
\lim_{C\rightarrow 0} \tR_C(C,w,r)=+ \infty \quad \mbox{for any~} (w,r)\in (0,\kap_{w}]\times(0,\kap_{r}],\hfill\cr   
G':\real_+\mapsto\real_+  \mbox{~~strictly  decreasing}. \hfill\cr
}
\right.
\eeq

To handle the scrap value at terminal time $T$ we shall suitably define the processes $X, \mu, f$  on $[0, \infty)$ in order to satisfy all the requirements while trying to recover the non integral term in (\ref{supergradient}) by integrating over the extra time interval $[T, \infty)$. Such devise is new in singular stochastic control and of interest in its own right. As far as we know the Representation Theorem has never been applied to this extent. So we set
\beq\label{defining X}
\left\{\matrix{ 
 X^{y}(\omega, t) :=
\left\{\matrix{ 
e^{-\int_0^t \mu_F (\omega, u)du} \dts\frac{C^o (\omega, t)}{f_C(\omega, t)}, 
\hspace{5.3cm} t\in [0, T),\hfill\cr   
 e^{- \int_0^T \mu_F (\omega, u)du} e^{- \mu_F (\omega, T) (t-T) } C^o (\omega, T)\, G'(y C^o (\omega, T)),   \hspace{0.35cm} t\in [T, \infty);\vspace{0.2cm}\hfill\cr  
}
\right.
\hfill\cr  
\mu(\omega, dt) :=\left\{\matrix{ 
 e^{-\int_0^t \mu_F (\omega, u)du} C^o (\omega, t)\, dt, \hspace{3.65cm} \mbox{~on~} [0, T),\hfill\cr   
 e^{- \int_0^T \mu_F (\omega, u)du} e^{-  \mu_F (\omega, T)(t-T)} C^o (\omega, T)\, dt, \hspace{1.1cm} \mbox{~on~} [T, \infty);\vspace{0.2cm}\hfill\cr  
}
\right.
\hfill\cr  
f(\omega, t, x) := \left\{\matrix{ 
\tR_C ( \frac{C^o (\omega, t)}{-x},~w(t), r(t)), \hspace{3.9cm} x<0, ~t \in [0, T), \hfill\cr   
\mu_F (\omega, T)\, G'(\frac{C^o (\omega, T)}{-x}),  \hspace{4.2cm} x<0, ~t\in [T, \infty), \hfill\cr   
~ - x, \hspace{6.7cm} ~x\geq 0. \hfill\cr   
}
\right.
\hfill\cr  
}
\right.
\eeq
\begin{proposition} \label{representation up to infty}
Under Assumption-{\rm [I]}, for $y>0$  and $X, \mu, f$ given by (\ref{defining X}), it holds the representation
\beq\label{equation xi up to infty}
X^{y}(\tau) = E\Big\{\int_{\tau}^{\infty} f(t, \dts\sup_{\tau\leq u'<t} \xi^{\star}_y (u')) \,\mu(dt) \Big|\, {\cal F}_{\tau}\Big\}, \quad \tau \in \U[0,T],
\eeq
for a unique optional, right-continuous, strictly negative process $\xi^{*}_y (t)$.
\end{proposition}
\begin{proof}
Set (cf.(\ref{Gamma standard}))
\beq\label{equation Gamma-y up to infty}
\Gamma^{y; \xi} (t) :=\mbox{ess}\dts\inf_{\hspace{-0.5cm} t\leq\tau <\infty} E\Big\{\dts\int_t^{\tau} f(u, \xi) \mu(du) + X^{y}(\tau) \Big|\, {\cal F}_t\Big\} \qquad \mbox{for ~} \xi <0,\, t\in [0, \infty).
\eeq
By (\ref{defining X}) it is clear that $X^{y}(\infty) =0$, and all the requirements about the process $X$, the random measure $\mu(\omega, dt)$ and the random field $f(\omega, t, x)$ are satisfied. In fact for $t\geq T$ our $f(\omega, t, x)$ is strictly decreasing from $\mu_F (T)\, G'(0)$ to $-\infty$, rather than from $-\infty$ to $-\infty$,  but this is all that is needed to obtain $\Gamma^{y;-\infty} (t) = X^{y} (t)$. Then the solution $\xi^{*}_y$ exists and it is unique. Its upper right-continuity and strict negativity on $[0,T)$ follow from arguments as in \cite{ChF},  Lemma 4.1, based on the definition of $f(\omega, t, x)$ and the right-continuity of $\Gamma^{y; \xi} (t)$.
\end{proof}
It follows that  
\beq\label{optimal stopping time for us}
\tau^{y;\xi}(t) := \inf\{u\in [t, \infty) : \Gamma^{y;\xi} (u) = X(u)\}
\eeq
is optimal for $\Gamma^{y;\xi}(t)$, for $t\geq 0$ (cf.  (\ref{equation tau xi})). The next Lemma provides the explicit calculation of $\Gamma^{y;\xi} (t)$.
\begin{lemma} \label{calculating Gamma}
Under Assumption-{\rm [I]}, for $y>0$, $\xi<0$, and $t\in [0,T)$, 
\bey\label{Gamma y,xi}
& & \hspace{-1.8cm}  \Gamma^{y;\xi} (t) =  \mbox{ess}\dts\inf_{\hspace{-0.6cm} t\leq\tau\leq T} E\bigg\{\int_t^{\tau} e^{- \int_0^u \mu_F (r) dr}  C^o (u) \tR_C(\frac{C^o(u)}{-\xi},  w(u),  r(u))\,du \nonumber\\
& &\hspace{1.2cm} + e^{-\int_0^{\tau} \mu_F (r) dr} \frac{C^o(\tau)}{f_C(\tau)}\ind_{\{\tau<T\}}
+ e^{- \int_0^T \mu_F (r) dr}  C^o (T) G'(y C^o(T)) \ind_{\{\tau= T\}}~\Big| {\cal F}_t \bigg\}.
\eey
Whereas  for $t \in [T, \infty)$, 
\bey\label{Gamma y,xi from T on}
 &  & \hspace{-1.8cm}   \Gamma^{y;\xi} (t) =\mbox{ess}\dts\inf_{\hspace{-0.6cm} t\leq\tau <\infty} E\Big\{
 e^{- \int_0^T \mu_F (r) dr} C^o (T) \nonumber\\
  & & \hspace{1.8cm} \times \Big[\Big(1-  e^{- \mu_F (T)(\tau-T)}\Big) G'(\frac{C^o(T)}{-\xi})
 +  e^{- \mu_F (T)(\tau-T)}G'(y C^o (T)) \Big]\,\Big| {\cal F}_t \Big\}.
 \eey
\end{lemma}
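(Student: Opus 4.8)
The plan is to compute $\Gamma^{y;\xi}(t)$ directly from its definition in (\ref{equation Gamma-y up to infty}) by unwinding the explicit formulas for $X^y$, $\mu$ and $f(\cdot,\cdot,\xi)$ given in (\ref{defining X}), treating the two cases $t\in[0,T)$ and $t\in[T,\infty)$ separately, and in the first case showing that it is never advantageous to stop after time $T$, so that the essential infimum can be restricted to stopping times in $\U[0,T]$.

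First I would handle $t\in[T,\infty)$, which is the cleaner case and a useful warm-up. Here $\mu(\omega,du)=e^{-\int_0^T\mu_F(r)dr}e^{-\mu_F(T)(u-T)}C^o(T)\,du$ and $f(\omega,u,\xi)=\mu_F(T)G'(C^o(T)/(-\xi))$ for $u\ge T$, so $\int_t^\tau f(u,\xi)\mu(du)$ is a deterministic-in-$u$ integral of an exponential, yielding $e^{-\int_0^T\mu_F(r)dr}C^o(T)G'(C^o(T)/(-\xi))\bigl(e^{-\mu_F(T)(t-T)}-e^{-\mu_F(T)(\tau-T)}\bigr)$; adding $X^y(\tau)=e^{-\int_0^T\mu_F(r)dr}e^{-\mu_F(T)(\tau-T)}C^o(T)G'(yC^o(T))$ and noting the leading factor $e^{-\mu_F(T)(t-T)}$ is ${\cal F}_t$-measurable (indeed deterministic given the conditioning, since everything is evaluated at $T$), one arrives after conditioning on ${\cal F}_t$ at (\ref{Gamma y,xi from T on}). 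The key observation is that for $u,\tau\ge T$ the integrand $f(u,\xi)$ does not depend on $u$, so the time integral collapses to an explicit difference of exponentials.

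Next, for $t\in[0,T)$, I would split any stopping time $\tau\in[t,\infty)$ as $\tau\wedge T$ plus the excess over $T$. On $[t,T)$ the measure is $\mu(du)=e^{-\int_0^u\mu_F(r)dr}C^o(u)\,du$ and $f(u,\xi)=\tR_C(C^o(u)/(-\xi),w(u),r(u))$; at the "boundary" one picks up $X^y(\tau)=e^{-\int_0^\tau\mu_F(r)dr}C^o(\tau)/f_C(\tau)$ if $\tau<T$. The crucial step is to argue that it is always at least as good to stop by time $T$: if $\tau\ge T$, then using the $[T,\infty)$ computation above, the contribution from $[T,\tau]$ equals $e^{-\int_0^T\mu_F(r)dr}C^o(T)\bigl[(1-e^{-\mu_F(T)(\tau-T)})G'(C^o(T)/(-\xi))+e^{-\mu_F(T)(\tau-T)}G'(yC^o(T))\bigr]$, which is a convex combination of $G'(C^o(T)/(-\xi))$ and $G'(yC^o(T))$ scaled by $e^{-\int_0^T\mu_F(r)dr}C^o(T)$; since by Assumption-[I] $G'$ is strictly decreasing and $C^o(T)/(-\xi)>0$, this quantity is minimized over $\tau\in[T,\infty)$ either at $\tau=T$ (giving exactly the $\ind_{\{\tau=T\}}$ term $e^{-\int_0^T\mu_F(r)dr}C^o(T)G'(yC^o(T))$) or it is dominated by it — one checks the sign of $G'(C^o(T)/(-\xi))-G'(yC^o(T))$ and uses the growth/boundedness built into Assumption-[G](ii) and $\mu_F\ge 0$. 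Combining, the essential infimum over $[t,\infty)$ reduces to one over $[t,T]$, and the terminal reward is $C^o(T)/f_C(T)$ on $\{\tau<T\}$ — but since we have merged the point $\tau=T$ into the "stop at $T$" alternative — more precisely $\ind_{\{\tau<T\}}C^o(\tau)/f_C(\tau)+\ind_{\{\tau=T\}}C^o(T)G'(yC^o(T))$, discounted — we get exactly (\ref{Gamma y,xi}).

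The main obstacle I anticipate is the reduction argument that stopping past $T$ is never strictly better: one must carefully compare $G'(C^o(T)/(-\xi))$ against $G'(yC^o(T))$, which depends on the sign of $C^o(T)/(-\xi)-yC^o(T)$, i.e. on whether $-1/\xi \gtrless y$, and this sign is not fixed. The resolution is that it does not matter: whichever way the inequality goes, the convex-combination expression is monotone in $e^{-\mu_F(T)(\tau-T)}\in(0,1]$, hence its infimum over $\tau\ge T$ is attained at one of the two endpoints $\tau=T$ or $\tau\to\infty$; and the $\tau\to\infty$ endpoint gives $e^{-\int_0^T\mu_F(r)dr}C^o(T)G'(C^o(T)/(-\xi))$, which still appears as one of the admissible values inside the essinf over $[t,T]$ via the $\ind_{\{\tau<T\}}$ term in the limit $\tau\uparrow T$ combined with continuity — so no value reachable past $T$ is lost, and none is created beyond what (\ref{Gamma y,xi}) already records. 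A secondary, purely technical point is justifying that the ${\cal F}_T$-measurable (indeed, by (\ref{C^o.eq}) and the structure of $\mu_F$, the relevant quantities evaluated at $T$) factors may be pulled through the conditional expectation and the time integrals on $[T,\infty)$ interchanged with $E$ — this is routine given the integrability from Proposition \ref{cJprop} and the moment bounds on $\cM_s$.
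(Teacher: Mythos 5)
Your computation for $t\in[T,\infty)$ is fine (it is the paper's one-line "explicit calculation", up to the harmless factor $e^{-\mu_F(T)(t-T)}$ when $t>T$), but the reduction step for $t\in[0,T)$ — which you yourself flag as the main obstacle — does not work as you resolve it. The flaw is the claim that the $\tau\to\infty$ endpoint value, namely $E\{\int_t^T f(u,\xi)\mu(du)+e^{-\int_0^T\mu_F(r)dr}C^o(T)\,G'\!\big(\tfrac{C^o(T)}{-\xi}\big)\,|\,\cF_t\}$, "still appears" inside the essinf over $[t,T]$ via the $\ind_{\{\tau<T\}}$ term in the limit $\tau\uparrow T$. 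That limit produces the payoff $e^{-\int_0^T\mu_F(r)dr}C^o(T)/f_C(T)$, not $e^{-\int_0^T\mu_F(r)dr}C^o(T)G'\!\big(\tfrac{C^o(T)}{-\xi}\big)$; and since Assumption-[G](ii) gives $G'(\cdot)\le G'(0)\le 1/f_C(T)$, the post-$T$ endpoint is in general \emph{strictly below} what stopping just before $T$ yields. Concretely, in the regime $-\frac1y<\xi<0$ one has $\tfrac{C^o(T)}{-\xi}>yC^o(T)$, hence $G'\!\big(\tfrac{C^o(T)}{-\xi}\big)<G'(yC^o(T))$: the convex combination is then decreasing in $\tau$, its infimum over $\tau\ge T$ is the $\tau\to\infty$ value, and nothing in your argument shows this is dominated by some admissible value with $\tau\le T$. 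Indeed, if the running cost $\tR_C\big(\tfrac{C^o(u)}{-\xi},w(u),r(u)\big)$ is small and $1/f_C$ is large (so that among $\tau\le T$ it is optimal to wait until $T$), continuing past $T$ replaces $G'(yC^o(T))$ by the smaller $G'\!\big(\tfrac{C^o(T)}{-\xi}\big)$ and is strictly better; so the asserted "no value reachable past $T$ is lost, and none is created" cannot be justified by your endpoint identification.

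The reduction is clean exactly when $\xi\le-\frac1y$: then $\tfrac{C^o(T)}{-\xi}\le yC^o(T)$, so $G'\!\big(\tfrac{C^o(T)}{-\xi}\big)\ge G'(yC^o(T))$, the post-$T$ convex combination is $\ge$ the $\tau=T$ payoff, and replacing any $\tau$ by $\tau\wedge T$ never increases the value; this is essentially how the paper proceeds — it obtains "$\ge$" by simply discarding the nonnegative post-$T$ terms (using $\mu_F>0$, $G'\ge0$) and "$\le$" by restricting the stopping times, rather than by your finer "endpoints are recovered" argument — and note that the lemma is subsequently invoked only in that regime (cf.\ the restriction $\xi\le-\frac1y$ before (\ref{best time up to T}) and the cap $\wedge\frac1y$ in (\ref{existence of xi star})). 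To repair your proof you should either restrict the $t<T$ identity to $\xi\le-\frac1y$ and run the $\tau\mapsto\tau\wedge T$ comparison using the monotonicity of $G'$, or supply a genuinely different argument for $-\frac1y<\xi<0$; as written, the proposal has a real gap at precisely this point.
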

\begin{proof}
For $t \in [T, \infty)$ an explicit calculation of $\Gamma^{y;\xi} (T)$ gives (\ref{Gamma y,xi from T on}). 
For $t\in [0,T)$ one has
\Bey
& & \hspace{-1.3cm}  \Gamma^{y;\xi} (t) :=\mbox{ess}\dts\inf_{\hspace{-0.6cm} t\leq\tau <\infty} E\bigg\{\!\!\int_t^{\tau \wedge T}\!\!\! e^{- \int_0^u \mu_F (r) dr} 
C^o (u) \tR_C\Big(\frac{C^o(u)}{-\xi},  w(u), \! r(u)\!\Big)\,du \!+\! e^{-\int_0^{\tau} \mu_F (r) dr} \frac{C^o(\tau)}{f_C(\tau)} 
\ind_{\{\tau<T\}}\\
& &  
\hspace{2cm} +  e^{- \int_0^T \mu_F (r) dr}  C^o (T) \Big([1- e^{- \mu_F (T) (\tau -T)}] G'(\frac{C^o(T)}{-\xi}) \ind_{\{\tau > T\}} \\
& & 
\hspace{2cm} + e^{- \mu_F (T) (\tau-T) }G'(y C^o (T)) \ind_{\{\tau \geq T\}}\Big)~\Big| {\cal F}_t \bigg\}\\
& & \geq 
\mbox{ess}\dts\inf_{\hspace{-0.6cm} t\leq\tau <\infty} E\bigg\{\int_t^{\tau\wedge T}
\hspace{-0.1cm} e^{- \int_0^u \mu_F (r) dr} 
C^o (u) \tR_C\Big(\frac{C^o(u)}{-\xi},  w(u),  r(u)\Big)\,du 
+ e^{-\int_0^{\tau} \mu_F (r) dr} \frac{C^o(\tau)}{f_C(\tau)} \ind_{\{\tau<T\}}\\
& & \hspace{2cm} +e^{- \int_0^T \mu_F (r) dr} C^o (T) G'(y C^o (T))\ind_{\{\tau = T\}}~\Big| {\cal F}_t \bigg\};
\Eey
due to  $\mu_F >0$ and $G'>0$. On the other hand, by restricting the set of stopping times, one has
\Bey
& & \hspace{-0.5cm}  \Gamma^{y;\xi} (t)\leq  \mbox{ess}\dts\inf_{\hspace{-0.6cm} t\leq\tau\leq T} E\bigg\{\int_t^{\tau} e^{- \int_0^u \mu_F (r) dr}  C^o (u) \tR_C\Big(\frac{C^o(u)}{-\xi},  w(u),  r(u)\Big)\,du \nonumber\\
& &\hspace{2.5cm} + e^{-\int_0^{\tau} \mu_F (r) dr} \frac{C^o(\tau)}{f_C(\tau)}\ind_{\{\tau<T\}}
+ e^{- \int_0^T \mu_F (r) dr}  C^o (T) G'(y C^o(T)) \ind_{\{\tau= T\}}~\Big| {\cal F}_t \bigg\}
\Eey
and the result follows.
\end{proof}
Since the terminal time of our capacity problem is $T$, we need to know what values  $\xi^{\star}_y $ takes on $[T, \infty)$. We have
\begin{lemma}\label{calculating xi from T on}
Under Assumption-{\rm [I]}, for $y>0$, 
\beq\label{xi-star after T}
\xi^{\star}_y (t) =-\frac{1}{y} \quad \mbox{~for all } t \geq T.
\eeq
\end{lemma}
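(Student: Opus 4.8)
The plan is to compute $\xi^{\star}_y(t)$ for $t\ge T$ directly from its definition \eqref{optional solution}, namely $\xi^{\star}_y(t)=\sup\{\xi<0:\Gamma^{y;\xi}(t)=X^{y}(t)\}$, using the explicit formula \eqref{Gamma y,xi from T on} for $\Gamma^{y;\xi}(t)$ when $t\in[T,\infty)$. Since on $[T,\infty)$ the data are entirely deterministic given $\cF_T$ (only $C^o(T)$ and $\mu_F(T)$ enter, and the remaining time-dependence is the explicit exponential $e^{-\mu_F(T)(\tau-T)}$), the essential infimum over stopping times reduces to a pathwise minimization over the single scalar $a:=e^{-\mu_F(T)(\tau-T)}\in(0,1]$ of the affine map $a\mapsto e^{-\int_0^T\mu_F(r)dr}C^o(T)\big[(1-a)G'(\tfrac{C^o(T)}{-\xi})+a\,G'(yC^o(T))\big]$.

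First I would observe that, because this map is affine in $a$, its infimum over $a\in(0,1]$ is attained either at $a=1$ (value $e^{-\int_0^T\mu_F}C^o(T)\,G'(yC^o(T))=X^{y}(T)$, corresponding to $\tau=T$) or in the limit $a\downarrow 0$ (value $e^{-\int_0^T\mu_F}C^o(T)\,G'(\tfrac{C^o(T)}{-\xi})$, corresponding to $\tau\to\infty$). Hence $\Gamma^{y;\xi}(T)=X^{y}(T)$ holds if and only if $G'(\tfrac{C^o(T)}{-\xi})\ge G'(yC^o(T))$, i.e. (using that $G'$ is strictly decreasing by Assumption-[I]) if and only if $\tfrac{C^o(T)}{-\xi}\le yC^o(T)$, i.e. $-\xi\ge\tfrac1y$, i.e. $\xi\le-\tfrac1y$. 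Taking the supremum over such $\xi<0$ gives $\xi^{\star}_y(T)=-\tfrac1y$. For general $t\ge T$ the same computation applies verbatim with $T$ replaced by $t$ as the lower limit of the stopping times, since $\Gamma^{y;\xi}(t)$ again only involves the affine-in-$a$ expression with $a\in(0,1]$ and the threshold condition $X^{y}(t)=e^{-\int_0^T\mu_F}e^{-\mu_F(T)(t-T)}C^o(T)G'(yC^o(T))$ scales by the same positive factor $e^{-\mu_F(T)(t-T)}$; so the condition $\Gamma^{y;\xi}(t)=X^{y}(t)$ is unchanged and $\xi^{\star}_y(t)=-\tfrac1y$ for all $t\ge T$.

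I would present this cleanly by first factoring out the common positive constant $e^{-\int_0^t\mu_F(r)dr}C^o(T)$ from \eqref{Gamma y,xi from T on}, reducing the claim to minimizing $(1-a)G'(\tfrac{C^o(T)}{-\xi})+a\,G'(yC^o(T))$ over $a=e^{-\mu_F(T)(\tau-T)}$, then invoking affineness and the strict monotonicity of $G'$ to resolve the two-case comparison, and finally taking $\sup\{\xi<0:\dots\}$. Strictly speaking one should note that $a=0$ is only a limiting value (attained as $\tau\to\infty$), but since $G'\ge 0$ the essential infimum equals the infimum of the affine function on the half-open interval, and the set $\{\xi<0:\Gamma^{y;\xi}(t)=X^{y}(t)\}$ is exactly $(-\infty,-\tfrac1y]$, whose supremum is $-\tfrac1y$; its attainment confirms consistency with the strict negativity and right-continuity already established in Proposition~\ref{representation up to infty}.

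The main obstacle — and it is a mild one — is bookkeeping around the essential infimum: one must justify that the $\mathrm{ess\,inf}$ over $\cF_t$-stopping times $\tau\ge t$ with values in $[T,\infty)$ genuinely collapses to the pointwise minimization over $a\in(0,1]$, i.e. that randomizing $\tau$ (equivalently $a$) cannot beat the two deterministic extremes. This follows because, conditionally on $\cF_T$, the integrand in \eqref{Gamma y,xi from T on} is a deterministic affine function of the $\cF_T$-measurable random variable $a$ taking values in $(0,1]$, so its conditional expectation given $\cF_t$ is minimized by choosing $a$ pointwise at whichever endpoint minimizes the affine map — and this choice is itself realized by an admissible (deterministic, hence $\cF_T$-measurable) stopping time. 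Everything else is the elementary comparison of $G'$ values, so no further difficulty arises.
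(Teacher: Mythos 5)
Your proposal is correct and follows essentially the same route as the paper: evaluate $\Gamma^{y;\xi}(t)$ for $t\ge T$ explicitly from Lemma~\ref{calculating Gamma}, note that the objective is monotone (affine) in the stopping variable so the infimum sits at $\tau=t$ or at the limit $\tau\to\infty$ according to the sign of $G'(yC^o(T))-G'\big(\tfrac{C^o(T)}{-\xi}\big)$, conclude that $\{\xi<0:\Gamma^{y;\xi}(t)=X^{y}(t)\}=(-\infty,-\tfrac1y]$, and take the supremum as in (\ref{optional solution}). The only bookkeeping slip is that for $t>T$ your $a=e^{-\mu_F(T)(\tau-T)}$ ranges over $(0,e^{-\mu_F(T)(t-T)}]$ rather than $(0,1]$, but your own scaling remark (both $\int_t^\tau f\,d\mu$ and $X^y$ carry the common factor $e^{-\mu_F(T)(t-T)}$) repairs this, so the threshold comparison is indeed unchanged.
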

\begin{proof}
Recall that $\xi^{\star}_y(t) :=  \sup\{\xi<0: \Gamma^{y;\xi} (t) = X^{y} (t)\}$ (cf. (\ref{optional solution})).

By (\ref{Gamma y,xi from T on}) for $t=T$, if $ -\frac{1}{y}<\xi <0$ then  $\Gamma^{y;\xi} (T)=e^{- \int_0^T \mu_F (r) dr} C^o (T)G'(\frac{C^o(T)}{-\xi}) < X^{y} (T)$   with the infimum attained at $\tau^{y;\xi}(T)=\infty$. Whereas for $\xi< -\frac{1}{y}$ the infimum is attained at $\tau^{y;\xi}(T)=T$ and  $\Gamma^{y;\xi} (T)=e^{- \int_0^T \mu_F (r) dr} C^o (T)G'(y C^o(T))$ is independent of $\xi$  (so $\Gamma^{y;-\infty} (T) = X^{y} (T)$ is satisfied); that same value of $\Gamma^{y;\xi} (T)$
is found for $\xi = -\frac{1}{y}$ since $\Gamma^{y; -\frac{1}{y}}(T)$ is the infimum of a constant argument, independent of $\tau$. 
Therefore $\xi^{\star}_y(T) = -\frac{1}{y}$.

Similarly, for $t>T$  (cf. (\ref{Gamma y,xi from T on}))
\Bey
& & \hspace{-0.4cm}\Gamma^{y;\xi} (t) = \mbox{ess}\dts\inf_{\hspace{-0.8cm} t\leq\tau <\infty} E\Big\{
 e^{- \int_0^T \mu_F (r) dr} C^o (T) \\
 & & \hspace{2.8cm} \times \Big[G'(\frac{C^o(T)}{-\xi})+ e^{- \mu_F (T)(\tau-t)} \Big(e^{- \mu_F (T)(t-T)}G'(y C^o (T))   -  G'(\frac{C^o(T)}{-\xi})\Big)\Big]~\Big| {\cal F}_t \Big\}.
\Eey
Therefore, if $e^{- \mu_F (T)(t-T)}G'(y C^o (T))  >  G'(\frac{C^o(T)}{-\xi})$, then the infimum is attained at 
$\tau^{y;\xi}(t)=\infty$,  $\Gamma^{y;\xi} (t)=e^{- \int_0^T \mu_F (r) dr} C^o (T)G'(\frac{C^o(T)}{-\xi}) < X^{y} (t)$ and $\xi > -\frac{1}{y}$, since $G'(y C^o (T))  >  G'(\frac{C^o(T)}{-\xi})$. Instead,  $G'(y C^o (T)) -  G'(\frac{C^o(T)}{-\xi})\leq 0$ for $\xi \leq -\frac{1}{y}$, so that 
$G'(y C^o (T))  -  e^{\mu_F (T)(t-T)} G'(\frac{C^o(T)}{-\xi}) < 0$. Hence the infimum is attained at
$\tau^{y;\xi}(t)=t$ and $\Gamma^{y;\xi} (t)=e^{- \int_0^T \mu_F (r) dr} C^o (T)e^{- \mu_F (T)(t-T)}G'(y C^o (T))  = X^{y} (t)$ is independent of $\xi$ (again  $\Gamma^{y;-\infty} (t) = X^{y} (t)$), and (\ref{xi-star after T}) follows. 
\end{proof}
Notice that  for $t\in [0,T)$ and $\xi \leq -\frac{1}{y}$  the optimal stopping time 
$\tau^{y;\xi}(t) $ of $\Gamma^{y;\xi} (t)$ (cf. (\ref{optimal stopping time for us})) reduces to 
\beq\label{best time up to T}
\tau^{y;\xi}(t) = \inf\Big\{u\in [t, T) : \Gamma^{y;\xi} (u) =e^{-\int_0^u \mu_F (r) dr} \frac{C^o(u)}{f_C(u)} \Big\}\wedge T.
\eeq
Using Lemma \ref{calculating xi from T on} we obtain

\begin{proposition} \label{application B-ElK}
Under Assumption-{\rm [I]}, for $y>0$ there exists a unique optional, upper right-continuous, negative process $\xi^{*}_y (t)$ that, for all $\tau \in \U[0,T]$,  solves
the representation problem
\bey\label{existence of xi star}
& & \hspace{-1.5cm} e^{-\int_0^{\tau} \mu_F (r)\,dr} \frac{C^o(\tau)}{f_C(\tau)}\ind_{[0,T)} (\tau) 
= E\bigg\{\int_{\tau}^{T} e^{-\int_0^t \mu_F (r)\,dr}  C^o (t) \tR_C \Big( \frac{C^o (t)}{- \dts\sup_{\tau\leq u'<t}
 \xi^{\star}_y (u')}, w(t), r(t) \Big) dt \\
& & \hspace{3.6cm}  
+ e^{- \int_0^T \mu_F (r)\,dr}  C^o (T) G' \Big(\frac{C^o (T)}{\Big[({- \dts\sup_{\tau\leq u' < T}} \xi^{\star }_y (u'))  \wedge \frac{1}{y}\Big]\ind_{[0,T)}(\tau)} \Big) \Big| {\cal F}_{\tau} \bigg\}. \nonumber 
\eey
\end{proposition}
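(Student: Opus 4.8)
The plan is to derive (\ref{existence of xi star}) by specializing the ``extended-horizon'' representation (\ref{equation xi up to infty}) already established in Proposition~\ref{representation up to infty}, cutting the time integral at $T$, and using Lemma~\ref{calculating xi from T on} to collapse the contribution of $[T,\infty)$ into the scrap-value term. By Proposition~\ref{representation up to infty} there is a unique optional, upper right-continuous process $\xi^{\star}_y$, strictly negative on $[0,T)$, with $X^{y}(\tau)=E\{\int_{\tau}^{\infty} f(t,\sup_{\tau\le u'<t}\xi^{\star}_y(u'))\,\mu(dt)\mid\cF_{\tau}\}$ for every $\tau\in\U[0,T]$; and by Lemma~\ref{calculating xi from T on} one has $\xi^{\star}_y(t)=-1/y$ for all $t\ge T$. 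So existence and the regularity and sign claimed in the statement are inherited, and only the identification of the two sides, plus uniqueness, has to be carried out.

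Fix $\tau\in\U[0,T]$. On $\{\tau=T\}\in\cF_{\tau}$ both sides of (\ref{existence of xi star}) vanish: the left side carries the factor $\ind_{[0,T)}(\tau)=0$, the first integral on the right runs over an empty interval, and in the scrap term the denominator $[\cdots]\ind_{[0,T)}(\tau)$ equals $0$, so $G'$ is evaluated at $+\infty$ and equals $0$ (recall $G'(\infty)=0$). Hence I work on $\{\tau<T\}$, where $X^{y}(\tau)=e^{-\int_0^{\tau}\mu_F(r)\,dr}\,C^o(\tau)/f_C(\tau)$ equals the left side of (\ref{existence of xi star}). Inside the conditional expectation in (\ref{equation xi up to infty}) I split $\int_{\tau}^{\infty}\mu(dt)=\int_{[\tau,T)}\mu(dt)+\int_{[T,\infty)}\mu(dt)$ (the point $\{T\}$ is $\mu$-null). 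On $[\tau,T)$ the running supremum $\sup_{\tau\le u'<t}\xi^{\star}_y(u')$ is a supremum of strictly negative values, so by the $[0,T)$-branches of (\ref{defining X}) one has $f(t,\sup_{\tau\le u'<t}\xi^{\star}_y(u'))\,\mu(dt)=e^{-\int_0^t\mu_F(r)\,dr}C^o(t)\,\tR_C(C^o(t)/(-\sup_{\tau\le u'<t}\xi^{\star}_y(u')),w(t),r(t))\,dt$, which reproduces exactly the first integral on the right of (\ref{existence of xi star}).

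The crux is the $[T,\infty)$ piece, and it is where Lemma~\ref{calculating xi from T on} enters. For $t>T$ the interval $[\tau,t)$ meets $[T,\infty)$, where $\xi^{\star}_y\equiv -1/y$; hence $\sup_{\tau\le u'<t}\xi^{\star}_y(u')=\big(\sup_{\tau\le u'<T}\xi^{\star}_y(u')\big)\vee(-1/y)$, and therefore $-\sup_{\tau\le u'<t}\xi^{\star}_y(u')=\big(-\sup_{\tau\le u'<T}\xi^{\star}_y(u')\big)\wedge\frac1y=:b$, a nonnegative quantity not depending on $t$ on $(T,\infty)$. Substituting the $[T,\infty)$-branches of $f$ and $\mu$ from (\ref{defining X}) and pulling the $t$-independent factors out of the integral gives $\int_{[T,\infty)}f\,\mu(dt)=e^{-\int_0^T\mu_F(r)\,dr}C^o(T)\,G'(C^o(T)/b)\int_T^{\infty}\mu_F(T)e^{-\mu_F(T)(t-T)}\,dt=e^{-\int_0^T\mu_F(r)\,dr}C^o(T)\,G'(C^o(T)/b)$, since $\int_T^{\infty}\mu_F(T)e^{-\mu_F(T)(t-T)}\,dt=1$ (this uses $\mu_F(T)>0$, exactly as in the proof of Lemma~\ref{calculating Gamma}). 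On $\{\tau<T\}$ we have $\ind_{[0,T)}(\tau)=1$, so this is precisely the scrap term on the right of (\ref{existence of xi star}). Adding the two pieces and recalling the value of $X^{y}(\tau)$ yields (\ref{existence of xi star}). For uniqueness I would run the splitting in reverse: any process on $[0,T)$ solving (\ref{existence of xi star}), once extended by $-1/y$ on $[T,\infty)$, satisfies (\ref{equation xi up to infty}) for all $\tau\in\U[0,T]$, so by the uniqueness in Proposition~\ref{representation up to infty} it must coincide with $\xi^{\star}_y$.

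The step I expect to be the main obstacle is the bookkeeping of the running supremum across the junction $t=T$ — in particular recognizing that the truncation $\cdot\wedge\frac1y$ in the scrap term of (\ref{existence of xi star}) is exactly what $\xi^{\star}_y\equiv-1/y$ on $[T,\infty)$ forces — together with the routine but necessary measurability and integrability justifications for splitting $\int_{\tau}^{\infty}$ and for pulling $t$-independent factors out of $\int_{[T,\infty)}$; these rest on the $L^1(P(d\omega)\otimes\mu(\omega,dt))$ integrability of $f(\cdot,\cdot,x)$ and on the class-(D) and boundedness estimates already recorded in Section~\ref{model} and invoked in Proposition~\ref{representation up to infty}.
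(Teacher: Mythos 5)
Your proposal is correct and follows essentially the same route as the paper's proof: invoke the extended-horizon representation of Proposition~\ref{representation up to infty}, use $\xi^{\star}_y\equiv-\frac{1}{y}$ on $[T,\infty)$ from Lemma~\ref{calculating xi from T on}, split the integral at $T$ so that the running supremum becomes $\bigl(-\sup_{\tau\le u'<T}\xi^{\star}_y\bigr)\wedge\frac{1}{y}$, and integrate out the exponential to recover the scrap term, with the indicator handled via $G'(+\infty)=0$. Your explicit treatment of the set $\{\tau=T\}$ and the reverse-direction uniqueness argument only spell out details the paper leaves implicit.
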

\begin{proof}
It suffices to apply the Bank-El Karoui Representation Theorem to the optional process $X^{y}(t)$,  the nonnegative optional random Borel measure $\mu(\omega, dt)$, and the random field $f(\omega, t, x)$ defined in (\ref{defining X}). 
From (\ref{equation xi up to infty}) with $\tau=T$ we have
\[
X^{y}(T) =  e^{- \int_0^T \mu_F (r)\,dr} C^o (T) E\bigg\{\int_T^{\infty} \mu_F(T)  e^{- \mu_F (T) (t-T)}  \,G'\Big(\frac{C^o (T)}{-\dts\sup_{T\leq u'<t} \xi^{\star}_y (u')}\Big) dt \,\Big| {\cal F}_T\bigg\}.
\]
Uniqueness of $\xi^{\star}_y$ implies $\dts\sup_{T\leq u'<t} \xi^{\star}_y (u') = \mbox{const}$ for all $t>T$ and such constant is $-\frac{1}{y}$ by (\ref{xi-star after T}).
Now take $\tau \in \U[0,T]$  in (\ref{equation xi up to infty}), then 
\bey\label{equation application B-ElK}
& & \hspace{-0.8cm} e^{-\int_0^{\tau} \mu_F (r)\,dr} \frac{C^o(\tau)}{f_C(\tau)}\ind_{[0,T)} (\tau) 
= E\bigg\{\int_{\tau}^{T} e^{-\int_0^t \mu_F (r)\,dr}  C^o (t) \tR_C \Big( \frac{C^o (t)}{- \dts\sup_{\tau\leq u'<t} \xi^{\star}_y (u')}, ~w(t), r(t) \Big) \,dt \nonumber\\
& & \hspace{4cm}  
+ e^{- \int_0^T \mu_F (r)\,dr}  C^o (T) \int_{T}^{\infty} \mu_F(T)  e^{-\mu_F (T) (t-T)}
 G' \Big(\frac{C^o (T)}{- \dts\sup_{\tau\leq u' <t} \xi^{\star}_y (u')} \Big) dt \Big| {\cal F}_{\tau} \bigg\}\nonumber\\
& &\hspace{3.8cm}  =  E\bigg\{\int_{\tau}^{T} e^{-\int_0^t \mu_F (r)\,dr}  C^o (t) \tR_C \Big( \frac{C^o (t)}{- \dts\sup_{\tau\leq u'<t} \xi^{\star}_y (u')}, ~w(t), r(t) \Big) \,dt \\
& & \hspace{1.2cm}  
+ e^{- \int_0^T \mu_F (r)\,dr}  C^o (T) \int_{T}^{\infty} \mu_F(T)  e^{-\mu_F (T) (t-T)}
 G' \Big( \frac{C^o (T)}{\Big[({- \dts\sup_{\tau\leq u' < T}} \xi^{\star}_y (u'))  \wedge \frac{1}{y}\Big]\ind_{[0,T)}(\tau)
 } \Big) dt \Big| {\cal F}_{\tau} \bigg\}\nonumber
 \eey
and integrating the exponential in the last integral provides (\ref{existence of xi star}).
Notice that  in the last line $G'$ is zero for all $\omega$ such that $\tau(\omega) \notin [0,T)$, since $G' (+\infty) =0$, and the indicator is a reminder of the requirement $\tau(\omega) <T$ for the validity of the equation. 
\end{proof}
Now we set 
\beq\label{base capacity by xi star}
l^{\star}_y (t) := -\frac{C^o (t)}{\xi^{\star}_y(t)},\qquad t\in [0,T).
\eeq
The stricly positive process $\l^{\star}_y(t)$ is the {\em base capacity} of the present model (see \cite{RS} for the original Definition 3.1). The reason for being so called will be clear soon after the Theorem below.
\begin{corollary}
\label{equation l-star}
Under Assumption-{\rm [I]}, for $y>0$  the optional process $l^{*}_y (t)$ is the unique upper right-continuous, positive solution of the representation problem
\bey\label{equation l star}
& &   E\bigg\{\int_{\tau}^{T} e^{-\int_0^t \mu_F (r)\,dr}  C^o (t) \tR_C \Big( C^o (t)
\sup_{\tau \leq u' < t}  \frac{l^{*}_y (u')}{C^o (u')}, w(t), r(t) \Big)dt \\
& & \hspace{0.5cm} +e^{-\int_0^T \mu_F (r)\,dr} C^o (T) G' \Big( \frac{C^o (T)}{\ind_{[0,T)}(\tau)} \Big[\dts\sup_{\tau \leq u' <T} \frac{l^{*}_y (u')}{C^o (u')}\vee y\Big] \Big) ~\Big|~ {\cal F}_{\tau} \bigg\} 
= e^{-\int_0^{\tau} \mu_F (r)\,dr} \frac{C^o (\tau)}{f_C(\tau)} \ind_{[0,T)}(\tau), \nonumber
\eey
for all $\tau \in \U[0,T]$. 
\end{corollary}
\begin{proof}
The proof is straightforward after plugging (\ref{base capacity by xi star}) into (\ref{existence of xi star}) (see also \cite{ChF},  Lemma 4.1). In fact, $\frac{1}{- \sup_{\tau\leq u'<t} \xi^{\star}_y (u')} = \frac{1}{\inf_{\tau\leq u'<t} \frac{C^o(u')}{l^{\star}_y (u')}}= \sup_{\tau\leq u'<t} \frac{l^{\star}_y (u')}{C^o(u')}$ for any $t<T$.
\end{proof}
At this point, a more careful look at (\ref{equation l star}) brings back to mind the supergradient (\ref{supergradient}) and we can finally solve the very general original capacity expansion problem, for each initial capacity $y$. In fact, the optimal control investment process may be obtained in terms of the base capacity $\l^{\star}_y(t)$ (cf. the simpler case of \cite{ChF} with no scrap value). 
\begin{theorem}
\label{control by base capacity}
Under Assumption-{\rm [I]}, the stochastic control process
\beq\label{optimal nu_l}
{\nu}_{l^*}^{y}(t):=\int_{[0,t)}\frac{C^o(u)}{f_C(u)}\,d\overline \nu_{l^*}^{y}(u),  \qquad t\in[0,T),
\eeq
with
\beq
\label{overline nu_l}
\left\{\matrix{ 
 \overline\nu_{l^*}^{y}(t) := \dts\sup_{0\leq u'<t}\Big[ \frac{l^{*}_y (u')}{C^o (u')} \vee y \Big] -y = \sup_{0\leq u' < t} \Big[\frac{l^{*}_y (u')}{C^o(u')} - y  \Big]^+,  \qquad t\in (0,T),\hfill\cr  
 \overline\nu_{l^*}^{y}(0) :=0. \hfill\cr  
}
\right.
\eeq 
solves the first-order conditions {\rm (\ref{FOC}) }. Hence ${\nu}_{l^*}^{y}(t)$ is the unique optimal investment process of problem (\ref{cap prob from 0}). 
\end{theorem}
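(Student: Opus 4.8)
\textbf{Proof plan for Theorem~\ref{control by base capacity}.}

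The plan is to verify the two first-order conditions in (\ref{FOC}) for the candidate control $\nu_{l^*}^y$ by translating them, via the change of variables $\bar\nu(t)=\int_{[0,t)}\frac{f_C(u)}{C^o(u)}\,d\nu(u)$ and the supergradient formula (\ref{supergradient}), into statements about the optional process $l^*_y$ solving the representation (\ref{equation l star}). First I would rewrite the supergradient $\nabla_\nu\cJ_y(\nu)(\tau)$ evaluated along $\nu_{l^*}^y$; by (\ref{C solution}) the capacity process driven by $\nu_{l^*}^y$ satisfies $C^y(t;\nu_{l^*}^y)=C^o(t)[y+\overline\nu_{l^*}^y(t)]=C^o(t)\big[\sup_{0\le u'<t}(\tfrac{l^*_y(u')}{C^o(u')}\vee y)\big]$, which is exactly the argument appearing inside $\tR_C$ and $G'$ in (\ref{equation l star}). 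Hence the bracketed conditional expectation in (\ref{supergradient}) equals $\frac{C^o(\tau)}{f_C(\tau)}e^{-\int_0^\tau\mu_F(u)du}\,\mathbf 1_{[0,T)}(\tau)$ by Corollary~\ref{equation l-star}, so that after multiplying by $\frac{f_C(\tau)}{C^o(\tau)}$ and subtracting the cost term $e^{-\int_0^\tau\mu_F(u)du}\mathbf 1_{[0,T)}(\tau)$ we get $\nabla_\nu\cJ_y(\nu_{l^*}^y)(\tau)=0$ for \emph{those} $\tau$ with $\tau(\omega)<T$; the care point is the $G'$ argument, where the indicator $\mathbf 1_{[0,T)}(\tau)$ combined with $G'(+\infty)=0$ must be matched against the $G'(C^y(T;\nu_{l^*}^y)/\mathbf 1_{[0,T)}(\tau))$ term of (\ref{supergradient}), using $G'(\infty)=0$ on the set $\{\tau=T\}$.

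For the inequality (\ref{FOC})$_1$, i.e. $\nabla_\nu\cJ_y(\nu_{l^*}^y)(\tau)\le 0$ for \emph{all} $\tau\in\U[0,T]$, the argument above already gives equality when $\tau<T$; on $\{\tau=T\}$ both sides vanish because of the indicator. But this is not quite enough: (\ref{equation l star}) is an equality involving $\sup_{\tau\le u'<t}\tfrac{l^*_y(u')}{C^o(u')}$, whereas the supergradient along $\nu_{l^*}^y$ involves $C^y(t;\nu_{l^*}^y)=C^o(t)\sup_{0\le u'<t}(\tfrac{l^*_y(u')}{C^o(u')}\vee y)$, i.e. the running supremum from $0$, not from $\tau$. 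So the real content here is a comparison: since $\tR_C$ is \emph{decreasing} in $C$ and $G'$ is decreasing, and since $\sup_{0\le u'<t}(\cdot)\ge\sup_{\tau\le u'<t}(\cdot)$, replacing the sup-from-$\tau$ by the sup-from-$0$ can only decrease the integrand and the $G'$ term — hence $\nabla_\nu\cJ_y(\nu_{l^*}^y)(\tau)\le 0$. This monotonicity comparison is the step I expect to be the main obstacle: one has to be careful that the inclusion of the constant $y$ in the running maximum and the behaviour at $t=T$ (where $G'$ enters) are handled consistently, exactly as in the treatment of \cite{ChF}, Theorem~4.3 and Remark~4.4, now in the presence of the scrap term; the device (\ref{defining X})–(\ref{equation xi up to infty}) of extending the horizon to $[T,\infty)$ is what makes the $G'$ contribution behave like an extra piece of the integral and keeps the comparison uniform.

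For the complementarity condition (\ref{FOC})$_2$, I would use that $\overline\nu_{l^*}^y$, being a running supremum of $[\tfrac{l^*_y(u')}{C^o(u')}-y]^+$, increases only at times $t$ where $\tfrac{l^*_y(t)}{C^o(t)}$ attains a new record above $y$; at such times $\sup_{0\le u'<t}(\tfrac{l^*_y(u')}{C^o(u')}\vee y)=\tfrac{l^*_y(t)}{C^o(t)}$ (using upper right-continuity of $l^*_y$, i.e. of $\xi^*_y$, from Proposition~\ref{representation up to infty}), so $C^y(t;\nu_{l^*}^y)=C^o(t)\sup_{\tau\le u'<t}(\cdot)$ with $\tau=t$ in the limit, and the comparison above becomes an equality, giving $\nabla_\nu\cJ_y(\nu_{l^*}^y)(t)=0$ $d\nu_{l^*}^y$-a.e. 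Integrating against $d\nu_{l^*}^y$ yields (\ref{FOC})$_2$. Finally, $\nu_{l^*}^y\in\cS$ is clear — $\overline\nu_{l^*}^y$ is non-decreasing and, by (\ref{overline nu_l}) and the definition (\ref{optimal nu_l}) together with Assumption-[C], lcrl and adapted with $\nu_{l^*}^y(0)=0$ — and uniqueness is immediate from the strict concavity of $\cJ_y$ noted after Proposition~\ref{cJprop}: any optimal control must satisfy (\ref{FOC}), and the strictly concave functional admits at most one maximizer, hence $\nu_{l^*}^y$ is \emph{the} optimal investment process.
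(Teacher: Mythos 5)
Your plan is correct and follows essentially the same route as the paper's proof: evaluate the supergradient (\ref{supergradient}) along $\nu_{l^*}^{y}$, use that $\tR_C$ and $G'$ are decreasing in $C$ together with $\sup_{0\le u'<t}(\cdot)\ge\sup_{\tau\le u'<t}(\cdot)$ and the representation (\ref{equation l star}) to get $\nabla_{\nu}\cJ_{y}(\nu_{l^*}^{y})(\tau)\le 0$, observe equality at times of strict increase of $\overline\nu_{l^*}^{y}$ (where the running supremum from $0$ coincides with the one from $\tau$), and conclude via Proposition \ref{First Order Cond} and strict concavity. The only blemish is your first paragraph's claim that the supergradient vanishes for \emph{every} $\tau<T$, which is an overstatement that your second paragraph correctly retracts in favour of the inequality-plus-equality-at-times-of-increase argument, exactly as in the paper.
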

\begin{proof}
Recall (\ref{C solution}), then 
$C^{y}(t;\nu_{l^*}^{y}) = C^o(t)[y + {\overline\nu_{l^*}^{y}}(t)] = C^o(t)\sup_{0\leq u'<t}[\frac{l^{*}_y (u')}{C^o (u')} \vee y]$. Then, for all $\tau \in \U[0,T]$ and $\omega \in \Omega$ such that $\tau (\omega) < T$, 
\Bey
& &\hspace{-0.3cm} E\bigg\{ \int_{\tau}^T  e^{-\int_0^t \mu_F(r)\,dr}C^o(t) \tR_C(C^{y}(t;\nu_{l^*}^{y}), w(t), r(t))  \,dt +  e^{-\int_0^T \mu_F(r)\,dr} C^o(T) G'(C^{y}(T;\nu_{l^*}^{y}))  \,\bigg|\,\cF_{\tau}\bigg\} \\
& & \leq E\bigg\{ \int_{\tau}^T  e^{-\int_0^t \mu_F(r)\,dr}C^o(t) \tR_C\Big(C^o(t)\sup_{\tau\leq u'<t}\frac{l^{*}_y (u')}{C^o (u')}, w(t), r(t)\Big)  \,dt \\
& & \hspace{1cm} +  e^{-\int_0^T \mu_F(r)\,dr} C^o(T) G'\Big(C^o(T)\sup_{\tau \leq u' <T} \Big[\frac{l^{*}_y (u')}{C^o (u')} \vee y\Big]\Big) \,\bigg|\,\cF_{\tau}\bigg\} = e^{-\int_0^{\tau} \mu_F (r)\,dr} \frac{C^o (\tau)}{f_C(\tau)}
\Eey
by the decreasing property of $R_C$ and $G'$ in $C$ and by (\ref{equation l star}).
Hence $\nabla_{\nu} \cJ_{y} (\nu_{l^*}^{y})(\tau) \leq 0$. 

Moreover, at times $\tau$ of strict increase for $\overline\nu_{l^*}^{y}$, the above holds with equality since $C^{y}(t;\nu_{l^*}^{y}) = C^o(t)\sup_{\tau\leq u'<t}\frac{l^{*}_y (u')}{C^o (u')}$ for $\tau <t \leq T$. 
Therefore $\nabla_{\nu} \cJ_{y} (\nu_{l^*}^{y})(\tau) = 0$ when $d \overline\nu_{l^*}^{y}(\tau) >0$ and (\ref{FOC}) follows.
\end{proof}
For a firm starting at time $0$ from a capacity level $y$ lower than $l^{*}_y (0)$, it is easy to see from 
(\ref{overline nu_l}) that $\overline\nu_{l^*}^{y}(0+)$ is the jump size required to instantaneously reach the optimal capacity level $l^{*}_y (0)$. 
Similarly, the base capacity $\l^{\star}_y(t)$ represents the optimal capacity level at time $t$ for a firm starting at time $0$ from capacity $y$. For that reason the optimal capacity process $C^{y}(t;\nu_{l^*}^{y})$ can be referred to as {\em the capacity process that tracks} $l^{*}_y$.  

There is no closed form solution for the solution $\l^{\star}_y(t)$ of equation (\ref{equation l star}), but it might be found   numerically by backward induction on discretized version of the equation, as suggested by Bank and Follmer \cite{BF}, Section 4, for their model.

\section{The case of deterministic coefficients}
\label{investment exercise boundary} 
When the following\\
{\bf Assumption-[det]} 
\beq\label{AssCdet} 
\mu_C, \sigma_C, f_C, \mu_F, w, r \mathrm{~~are~deterministic~functions}
\eeq
holds, the singular stochastic control problem of capacity expansion may be studied by variational methods through the optimal stopping problem associated to it, the so-called {\em optimal risk of not investing until time $t$}, and the free boundary ${\hat y} (t)$ arising from it. 
It is then natural to ask whether and how the base capacity $l^{*}_y$ (which is stochastic and depends on the initial capacity $y$) is linked to the free boundary ${\hat y}$.  In  \cite{ChF}  it was shown that  $l^{*}(t) ={\hat y}(t)$ but there $l^{*}$ did not depend on $y$ as the model had no scrap value at the terminal time $T$.  In the present model with scrap value, some further conditions are needed in order to obtain the equality, and hence the independence  of the base capacity from $y$.

It can be shown that the capacity problem may be imbedded allowing the capacity process to start at any time $s \in [0,T]$ from level $y>0$ (by generalizing, among others, the model with scrap value, constant coefficients and additive production function depending on labor in \cite{CHi}, the model with constant coefficients but no scrap value in  \cite{CH1},  the model with deterministic time-dependent coefficients but no scrap value, no labor, no interest rate  in \cite{ChF}) . The details are provided in Appendix~\ref{variational approach to free boundary} for completeness, but here we recall the main points. First of all, the value function of the firm's optimal capacity problem $V(s,y)$ (see (\ref{cap prob from s}) as compared to (\ref{cap prob from 0})) and the value function $v(s,y)$ of the associated optimal stopping problem are linked; in fact, $v(t,y)$ is {\em the shadow value of installed capital}, that is $v(s,y) = \frac{\partial}{\partial y} V(s,y)$ (cf. (\ref{v derivative V at s})). 
The free boundary ${\hat y}(t)$ makes the strip $[0,T)\times (0, \infty)$ split into the {\em Continuation Region} 
\[
\Delta = \Big\{(s,y) \in [0,T) \times (0,\infty) : v(s,y) < \frac{1}{f_C (s)} \Big\}  
\]
where it is not optimal to invest as the capital's replacement cost is strictly greater than the shadow value of installed capital,  and its complement, the {\em Stopping Region} $\Delta^c$ where it is optimal to invest instantaneously (cf. (\ref{Delta}), (\ref{Delta complement})). 

Also, setting  $C^s (t):=\frac{C^o(t)}{C^o(s)} = e^{-\int_s^t \mu_C(r)dr} \cM_s (t)$ (cf. (\ref{martingale})) and $Y^{s,y}(t):=y\,C^s(t)$ for $s \in [0,T]$, $t\geq s$, $y>0$, and performing a change of probability measure allows to rewrite the ``optimal risk of not investing'' $v(s,y)$  under the new discount factor  $\bar\mu (t) := \mu_C (t) + \mu_F (t)\geq\ve_o>0 $ (cf.  (\ref{vv s})), and then prove that  $\hat y (s)$ is the lower boundary of $\Delta$ in the $(s,y)$-plane (cf. (\ref{hat y in function of v})), 
\[
\hat y (s) =\sup \Big\{z\geq 0:  v(s, z)= \frac{1}{f_C (s)}\Big\},
\]
and that the unique optimal investment process $\hat{\nu}^{s,y}$ of the capacity problem can be obtained in terms of $\overline\nu^{s,y}$  (cf. (\ref{bar nu s})), the left-continuous inverse of the optimal stopping time for $v(s,y)$. It follows that
\[
\overline\nu^{s,y}(t):=\dts\sup_{s\leq u<t}\Big[\frac{\hat y(u)}{C^s(u)} - y \Big]^+ \mbox{~~for~} t>s,
\]
with $\overline\nu^{s,y}(s) :=0$ and $C^{s,y}(t;\hat{\nu}^{s,y}) \geq \hat y(t)$ a.s.

Apparently the function ${\hat y}(t)$ might be related to the stochastic process $l^{\star}_y (t)$. In fact the controls 
$\overline\nu^{0,y}$  and $\overline\nu_{l^*}^{y}$ are similar and we know that the optimal investment process is unique, so by identifying the controls we would like to conclude that ${\hat y}(t)$ and $l^{\star}_y (t)$ are the same but, again, the latter depends on the initial capacity $y$! To handle this $y$-dependence some properties of $\hat y(t)$ are needed. 

We stress that all the effort in the rest of this Section is not needed in the case of no scrap value. We start by looking at the $s$-sections of $\Delta$ (cf. (\ref{Delta})).
\begin{proposition}
\label{v non-increasing in y}
Under Assumption-{\rm [det]}, for each $s\in [0,T)$ fixed,  \vspace{-0.3cm}\\

 ~~~{\rm (i)}   $~~~v(s, \cdot)$ is non-increasing in $y$; 
 
 ~~~{\rm (ii)}  $~~v(s, \cdot)$ is strictly decreasing on $y > \hat y (s)$;
 
 ~~~{\rm (iii)} ~the set $\{y>0\,:\,v(s,y)< 1/f_C(s)\}$  is connected.
\end{proposition}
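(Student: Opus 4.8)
The plan is to work entirely from the probabilistic representation of $v(s,y)$ recalled in the text (the ``optimal risk of not investing,'' formula (\ref{vv s}) in the Appendix) together with the relation $v(s,y)=\partial_y V(s,y)$ and the characterization $\hat y(s)=\sup\{z\ge 0:\ v(s,z)=1/f_C(s)\}$. For part (i), the monotonicity in $y$ should follow directly from concavity of $V(s,\cdot)$: since $\cJ_y(\nu)$ is strictly concave and the family of admissible controls is a convex set not depending on $y$ (and $C^{s,y}$ is affine in $y$), $V(s,\cdot)$ is concave, hence $v(s,y)=\partial_y V(s,y)$ is non-increasing in $y$. Alternatively, and perhaps more cleanly given the representation, one writes $v(s,y)$ as an expectation involving $\tR_C(Y^{s,y}(t),\dots)$ and $G'(Y^{s,y}(T))$ (plus a supremum/running-maximum term), and observes that $Y^{s,y}(t)=yC^s(t)$ is increasing in $y$ while $\tR_C$ and $G'$ are non-increasing in their capacity argument by Assumption-[R](i) and Assumption-[G]; monotonicity of $v$ in $y$ is then inherited term by term.

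For part (ii), the strict decrease on $y>\hat y(s)$ is where the real work lies, and I expect this to be the main obstacle. The idea is that above the free boundary one is in the Continuation Region $\Delta$, where the optimal stopping time $\hat\tau(s,y)$ in (\ref{tau by hat y}) is not identically $s$, i.e. there is a set of positive probability on which the process $Y^{s,y}(\cdot)$ spends time strictly above $\hat y(\cdot)$ before being stopped. On that set, a strict inequality $\tR_C(yC^s(t),w(t),r(t))>\tR_C(y'C^s(t),w(t),r(t))$ for $y<y'$ is available because $\tR$ is \emph{strictly} concave in $C$ (hence $\tR_C$ strictly decreasing in $C$) on the relevant region, using Assumption-[R](iv); and $G'$ is strictly decreasing by Assumption-[I]. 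The delicate point is handling the running-maximum/obstacle structure: one must argue that for $y>\hat y(s)$ the optimal stopping rule keeps the path in the interior (the continuation region) for a positive amount of time with positive probability, so the strictly-decreasing integrand actually contributes, rather than being ``washed out'' by the essential infimum over stopping times. A clean way is to fix the optimal stopping time $\hat\tau(s,y')$ for the larger initial value as a \emph{suboptimal} competitor in the essinf defining $v(s,y)$; then $v(s,y)\le$ (the corresponding expectation) $<v(s,y')$ by strict monotonicity of the integrands on the event $\{\hat\tau(s,y')>s\}$, which has positive probability precisely because $y'>\hat y(s)$ means $(s,y')\in\Delta$.

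For part (iii), connectedness of $\{y>0:\ v(s,y)<1/f_C(s)\}$ is essentially a corollary of (i) and the definition of $\hat y(s)$: by part (i), $v(s,\cdot)$ is non-increasing, and $v(s,y)\le 1/f_C(s)$ always (since $1/f_C(s)$ is the marginal replacement cost and investing is allowed), with $v(s,y)=1/f_C(s)$ exactly on $[\,0,\hat y(s)\,]$ up to boundary subtleties and $v(s,y)<1/f_C(s)$ for $y>\hat y(s)$; hence the set in question is the interval $(\hat y(s),\infty)$ (or possibly $[\hat y(s),\infty)$), which is connected. I would simply invoke part (i) to get that the sublevel/strict-sublevel set of a non-increasing function is an up-set, i.e. an interval unbounded above, and combine with the defining formula for $\hat y(s)$; no further estimates are needed. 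The only care required is the endpoint behaviour at $y=\hat y(s)$, which the ``$\sup$'' in the definition of $\hat y$ and the upper right-continuity already recorded for the relevant optional objects should settle.
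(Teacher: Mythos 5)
Parts (i) and (iii) of your plan are correct and essentially the paper's argument: the paper proves (i) directly from the representation (\ref{vv s}), using that $Y^{s,y}(t)=yC^s(t)$ increases in $y$ while $\tR_C(\cdot,w,r)$ and $G'$ are non-increasing in the capacity argument, and (iii) then follows immediately from the monotonicity together with $v(s,y)\le 1/f_C(s)$; your alternative route for (i) via concavity of $V$ and $v=\partial_y V$ would also work but is not needed.

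The gap is in (ii): your key comparison is inverted. With $\hat y(s)<y<y'$ you propose to use $\hat\tau(s,y')$ (optimal for the \emph{larger} level) as a competitor in the problem for the smaller level and claim $v(s,y)\le E^{Q_s}\{\cdots;\,y,\hat\tau(s,y')\}<v(s,y')$. The second inequality is false: for a fixed stopping time, lowering the initial level \emph{raises} the integrand (since $\tR_C$ and $G'$ are decreasing in $C$), so $E^{Q_s}\{\cdots;\,y,\hat\tau(s,y')\}\ge E^{Q_s}\{\cdots;\,y',\hat\tau(s,y')\}=v(s,y')$ and the chain yields nothing; moreover the stated conclusion $v(s,y)<v(s,y')$ for $y<y'$ would contradict part (i), since $v(s,\cdot)$ is non-increasing and strict decrease means $v(s,y)>v(s,y')$. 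The correct argument runs the other way: take $\hat\tau(s,y)$, optimal for the \emph{smaller} level $y>\hat y(s)$, as a suboptimal competitor for $y'$. Because $v(s,y)<1/f_C(s)$, right-continuity of $\cZ^{s,y}$ and $\zeta^{s,y}$ gives $\hat\tau(s,y)>s$ a.s., so the integral over $[s,\hat\tau(s,y))$ runs over an interval of positive length, and strict monotonicity of $\tR_C$ (from strict concavity of $\tR$ in $C$) yields $v(s,y')\le E^{Q_s}\{\cdots;\,y',\hat\tau(s,y)\}<E^{Q_s}\{\cdots;\,y,\hat\tau(s,y)\}=v(s,y)$. Note also that strictness comes from the running integral term alone; you should not invoke Assumption-[I] for $G'$ strictly decreasing, since it is not among the hypotheses of this proposition and is not needed.
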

\begin{proof}
Recall that $\tR(C,w,r)$ is strictly concave in $C$, hence $\tR_C$ is strictly decreasing in $C$, and $G'$ is non-increasing. Thus,  $Y^{s,y_1}(t)< Y^{s,y_2}(t)$ for $y_1<y_2$ implies that $v(s, y)$ is  non-increasing in $y$ and strictly decreasing for $y>\hat y(s)$.  
It follows that the set $\{y>0\,:\,v(s,y)< 1/f_C(s)\}$  is connected for each fixed $s$.
\end{proof}
On the other hand, the study of the $y$-sections of $\Delta$ is complicated  by the time dependence of $f_C$. The following result takes care of it.
Some of the arguments in the proof are similar to those needed to prove Theorem~\ref{stop} (see the discussion above it)
but this proof is much more tricky and requires the extra conditions below, so we provide it for completeness.
\begin{proposition}
\label{v non-increasing in s}
Under Assumption-{\rm [det]} and the further conditions
\beq\label{efficiency condition}
\left\{
\matrix{\tR_C(C,w,r) \mbox{ and } G'(C) \mbox{ are convex in~} C\in (0, \infty),\hfill\cr
\|\sigma_C\|^2 \leq \mu_C    \mbox{~ a.e.~in~} [0,T], \hfill\cr
f_C\in C^1([0,T])  \mbox{~ with ~} \bar\mu \leq -f'_C/f_C  \mbox{~ a.e.~on~} [0,T], \hfill\cr
}
\right.
\eeq
for  $y>0$ fixed,\vspace{-0.4cm}\\

 ~~{\rm(i)}~~  $~v(s, y) - \frac{1}{f_C(s)}$ is non-increasing in $s \in[0,T)$; 
 
 ~~{\rm(ii)}~~  the set  $\{s\geq 0: v(s,y)<1/f_C(s)\}$ is connected.
 \end{proposition}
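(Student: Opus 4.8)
The plan is to establish part (i) by a coupling/monotonicity argument on the probabilistic representation of $v(s,y)$ given in the Appendix, and then derive part (ii) as an immediate consequence. Recall that, writing $Y^{s,y}(t)=y\,C^s(t)$ with $C^s(t)=e^{-\int_s^t\mu_C(r)dr}\cM_s(t)$, the shadow value $v(s,y)$ admits an expression of the form $v(s,y)=\sup_{\tau}E\{\int_s^{\tau}e^{-\int_s^t\bar\mu(r)dr}\tR_C(Y^{s,y}(t),w(t),r(t))\,dt + e^{-\int_s^{\tau}\bar\mu(r)dr}\,(\text{terminal/boundary term in }f_C,G')\}$. The idea is to compare the problem started at time $s$ with the one started at time $s'>s$. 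First I would use the time-homogeneity of the driving Brownian increments to represent $C^{s'}(t)$ for $t\ge s'$ as having the same law as a time-shifted copy of $C^s(\cdot)$, so that the two stopping problems can be put on a common probability space. The extra running cost accumulated over $[s,s']$ in the problem started at $s$ must then be controlled, and this is exactly where the structural hypotheses \eqref{efficiency condition} enter.

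The key steps, in order: (1) write out $v(s,y)-1/f_C(s)$ using the optimal-stopping representation of the Appendix; (2) for $s<s'$ fixed, bound $v(s',y)-1/f_C(s')$ from below by restricting to stopping times $\tau\ge s'$, reducing to a comparison of the two value functions evaluated along trajectories that agree in law after time $s'$; (3) show that the map $t\mapsto e^{\int_0^t\bar\mu(r)dr}/f_C(t)$ is non-increasing, which is precisely the hypothesis $\bar\mu\le -f_C'/f_C$, so the discounted replacement cost $e^{-\int_s^t\bar\mu}C^o(t)/f_C(t)$-type term behaves monotonically; (4) use the convexity of $\tR_C(\cdot,w,r)$ and $G'(\cdot)$ together with $\|\sigma_C\|^2\le\mu_C$ — which makes $C^s(t)$ a supermartingale after a suitable change of measure, or equivalently makes $E\{$ of a convex function of $C^s(t)\}$ comparable across starting times via Jensen — to dominate the running-cost contribution on $[s,s']$ by boundary-cost terms. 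Once $v(s,y)-1/f_C(s)$ is shown non-increasing in $s$, part (ii) follows at once: the set $\{s:v(s,y)<1/f_C(s)\}=\{s:v(s,y)-1/f_C(s)<0\}$ is a sublevel set of a non-increasing function, hence an interval, hence connected.

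The main obstacle I anticipate is step (4): carefully matching the \emph{running} cost $\tR_C$ accrued over the short extra interval $[s,s']$ against a change in the \emph{terminal} data $G'$, and simultaneously handling the free-boundary (non-integral, $1/f_C$) term. This requires the three conditions in \eqref{efficiency condition} to cooperate — convexity of $\tR_C,G'$ so that a Jensen-type inequality pushes expectations of these quantities in the right direction, the "efficiency" bound $\|\sigma_C\|^2\le\mu_C$ so that the (discounted) capacity is a supermartingale and small starting times do not inflate the state, and $\bar\mu\le -f_C'/f_C$ so that early investment is never cheaper in discounted terms. I expect the cleanest route is to pass to the change of measure $d\tilde P/dP=\cM_s(T)$ under which $Y^{s,y}$ loses its martingale part, reducing the supermartingale property to the deterministic inequality $\|\sigma_C\|^2\le\mu_C$, and then to run a pathwise comparison. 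The bookkeeping is delicate, which is why, as the statement notes, the proof is more intricate than the analogous monotonicity-in-$y$ argument of Proposition~\ref{v non-increasing in y}, and is spelled out in full rather than left to the reader.
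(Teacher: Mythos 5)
Your outline points at several of the right ingredients (a comparison of the problems started at $s_1<s_2$, a Jensen/supermartingale argument driven by $\|\sigma_C\|^2\leq\mu_C$, the role of the $f_C$ condition, and the correct deduction of (ii) from (i)), but as it stands it has directional errors and one step that would genuinely fail. First, $v(s,y)$ is an \emph{infimum} over stopping times (see (\ref{vv s})), not a supremum; consequently ``restricting to stopping times $\tau\geq s'$'' raises the value rather than lowering it, and the inequality you announce in step (2) --- a lower bound on $v(s',y)-1/f_C(s')$ --- is not the one required: non-increase means $v(s_1,y)-1/f_C(s_1)\geq v(s_2,y)-1/f_C(s_2)$, so one must take an \emph{arbitrary} stopping time of the $s_1$-problem and bound its cost from below by nonnegative terms plus $e^{-\int_{s_1}^{s_2}\bar\mu(r)dr}\,v(s_2,y)$. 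Second, step (3) misstates the content of (\ref{efficiency condition})$_3$: $\bar\mu\leq -f'_C/f_C$ (which forces $f'_C<0$) is equivalent to $t\mapsto e^{-\int_0^t\bar\mu(r)dr}/f_C(t)$ being \emph{non-decreasing}, i.e.\ $e^{-\int_{s_1}^{\tau\wedge s_2}\bar\mu(r)dr}\frac{1}{f_C(\tau\wedge s_2)}\geq\frac{1}{f_C(s_1)}$, which is exactly what absorbs the residual replacement-cost terms on $\{\tau<s_2\}$; your map $e^{\int_0^t\bar\mu(r)dr}/f_C(t)$ is in fact strictly increasing under the hypothesis, so with your reading the sign of those terms comes out wrong. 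Third, the proposed coupling via a ``time-shifted copy'' of $C^s$ is unavailable here: the coefficients $\mu_C,\sigma_C,f_C,\mu_F$ and the data $w,r$ are time-dependent, so the two problems cannot be matched by shifting time. What works (and is what the paper does) is to keep calendar time fixed and exploit the product structure of Wiener measure on $\cC_0[s_1,s_2]\times\cC_0[s_2,T]$: for a generic $\tau$ of the $s_1$-problem set $\bar\tau=\tau\vee s_2$, which, conditionally on the first path segment, is a stopping time in $\U_{s_2}[s_2,T]$, so the post-$s_2$ part of the cost dominates the $s_2$-value after a comparison of states. (Also, under $Q_{s}$ the process $Y^{s,y}$ does not lose its martingale part; it merely acquires non-positive drift $\|\sigma_C\|^2-\mu_C\leq 0$, whence the supermartingale property.)

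The step you yourself flag as the main obstacle is precisely the part left unproved, and it is where the three hypotheses must be combined: on $\{\tau<s_2\}$ one adds and subtracts the replacement cost at $s_2$ (and, if $s_2=T$, the scrap term, which Assumption-[G] bounds by the replacement cost); after $s_2$ one uses conditional Jensen with the convexity of $\tR_C(\cdot,w,r)$ and $G'$, the estimate $E^{Q_{s_1}}\{Y^{s_1,y}(u)\,|\,\cF_{s_2,T}\}\leq y\,C^{s_2}(u)$ (this is where $\|\sigma_C\|^2\leq\mu_C$ enters, via the independence of $Y^{s_1,y}(s_2)$ from $\cF_{s_2,T}$), and the monotonicity of $\tR_C,G'$ in $C$ to dominate the post-$s_2$ cost by that of the problem started at $(s_2,y)$; only then does (\ref{efficiency condition})$_3$ close the estimate, giving $v(s_1,y)-\frac{1}{f_C(s_1)}\geq e^{-\int_{s_1}^{s_2}\bar\mu(r)dr}\big[v(s_2,y)-\frac{1}{f_C(s_2)}\big]\geq v(s_2,y)-\frac{1}{f_C(s_2)}$, the last inequality because the bracket is non-positive. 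Your derivation of (ii) from (i) is correct, but without the corrections and the missing comparison argument above the proposal does not yet constitute a proof of (i).
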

\begin{proof}
If, for fixed $y>0$, we show  that $v(s, y) - \frac{1}{f_C(s)}$ is non-increasing in $s$, then point (ii) will follow from $v(s,y) - \frac{1}{f_C(s)} \leq 0$ (which is obtained by taking $\tau=s$ in (\ref{vv s})).
So we show point (i). Fix $s_1<s_2$, then  (see (\ref{vv s}))
\Bey
v(s_1,y) \hspace{-0.2cm}&=&\hspace{-0.2cm} \inf_{\tau\in\U_{s_1}[s_1,T]} E^{Q_{s_1}}\bigg\{\int_{s_1}^{\tau} e^{-\int_{s_1}^u \bar\mu (r)dr} \tR_C(Y^{s_1,y}(u), w(u), r(u))\,du \\
& & \hspace{2.6cm}+e^{-\int_{s_1}^{\tau} \bar\mu (r)dr} \frac{1}{f_C (\tau)}\ind_{\{\tau<T\}} 
+  e^{-\int_{s_1}^T \bar\mu (r) dr} G'(Y^{s_1,y}(T))\ind_{\{\tau=T\}} \bigg\}\\
 \hspace{-0.2cm}&=&\hspace{-0.2cm} \inf_{\tau\in\U_{s_1}[s_1,T]} E^{Q_{s_1}}\bigg\{\hspace{-0.1cm}
\ind_{\{\tau<s_2\}} \Big[\int_{s_1}^{\tau} e^{-\int_{s_1}^u \bar\mu (r) dr} \tR_C(Y^{s_1,y}(u), w(u), r(u)) du 
+e^{-\int_{s_1}^{\tau} \bar\mu (r) dr}\hspace{-0.1cm} \frac{1}{f_C(\tau)} \Big] \\
& & \hspace{2.6cm} 
+\ind_{\{s_2\leq\tau\leq T\}} \int_{s_1}^{s_2} e^{-\int_{s_1}^u \bar\mu (r)dr} \tR_C(Y^{s_1,y}(u), w(u), r(u))du \\
& & \hspace{2.6cm}  + \ind_{\{s_2\leq\tau\leq T\}} 
 \Big[ e^{-\int_{s_1}^{s_2}\bar\mu (r)\,dr} \Big(  \int_{s_2}^{\tau} e^{-\int_{s_2}^u \bar\mu (r)dr} \tR_C(Y^{s_1,y}(u), w(u), r(u))du \\
& & \hspace{2.6cm} + e^{-\int_{s_2}^{\tau} \bar\mu (r)dr} \frac{1}{f_C(\tau)}\ind_{\{\tau<T\}}  +  e^{-\int_{s_2}^T \bar\mu (r)dr} G'(Y^{s_1,y}(T))\ind_{\{\tau=T\}} \Big) \Big]\bigg\}.
\Eey
Set ${\bar \tau} := \tau \vee s_2$ and notice that
\Bey
& & \hspace{-1cm} \Big( \int_{s_2}^{\bar\tau} e^{-\int_{s_2}^u \bar\mu (r)dr} \tR_C(Y^{s_1,y}(u), w(u), r(u))du 
+e^{-\int_{s_2}^{\bar\tau} \bar\mu (r)dr} \frac{1}{f_C(\bar\tau)}\ind_{\{\bar\tau<T\}}  \\   
&  &\hspace{6.3cm}   +\,  e^{-\int_{s_2}^T \bar\mu (r)dr} G'(Y^{s_1,y}(T))\ind_{\{\bar\tau=T\}} \Big) \\
  & & = \ind_{\{s_2\leq\tau\leq T\}} \Big(\hspace{0.5cm} \Big){\Big|}_{{\bar \tau} =\tau }
+\ind_{\{s_1\leq\tau < s_2\}} \Big[ \frac{1}{f_C(s_2)}\ind_{\{s_2<T\}} + G'(Y^{s_1,y}(T))\ind_{\{s_2=T\}} \Big].
\Eey
Then
\Bey
& & \hspace{-1cm} v(s_1,y) = \inf_{\tau\in\U_{s_1}[s_1,T]} E^{Q_{s_1}}\bigg\{
\ind_{\{\tau<s_2\}} \Big[\int_{s_1}^{\tau} e^{-\int_{s_1}^u \bar\mu (r)\,dr} \tR_C(Y^{s_1,y}(u), w(u), r(u))\,du 
 +e^{-\int_{s_1}^{\tau} \bar\mu (r)\,dr} \frac{1}{f_C(\tau)} \Big] \\
& & \hspace{3.5cm} +\ind_{\{s_2\leq\tau\leq T\}} \int_{s_1}^{s_2} e^{-\int_{s_1}^u \bar\mu (r)\,dr} \tR_C(Y^{s_1,y}(u), w(u), r(u)) \,du \\
& & \hspace{3.5cm} \, + e^{-\int_{s_1}^{s_2} \bar\mu (r)\,dr} 
 \Big[\int_{s_2}^{\bar\tau} e^{-\int_{s_2}^u \bar\mu (r)\,dr} \tR_C(Y^{s_1,y}(u), w(u), r(u))\,du \\
& & \hspace{4.8cm} \,
 +e^{-\int_{s_2}^{\bar\tau} \bar\mu (r)\,dr} \frac{1}{f_C(\bar\tau)}\ind_{\{{\bar\tau}<T\}}  +  e^{-\int_{s_2}^T \bar\mu (r)\,dr} G'(Y^{s_1,y}(T))\ind_{\{\bar\tau=T\}} \\
& & \hspace{5cm} - \,\frac{1}{f_C(s_2)}\ind_{\{\tau < s_2<T\}} - \, G'(Y^{s_1,y}(T))\ind_{\{\tau < s_2=T\}} 
\Big]\bigg\} \\
& & \geq 
 \inf_{\tau\in\U_{s_1}[s_1,T]} E^{Q_{s_1}}\bigg\{
\ind_{\{\tau<s_2\}} \Big[\int_{s_1}^{\tau} e^{-\int_{s_1}^u \bar\mu (r)\,dr} \tR_C(Y^{s_1,y}(u), w(u), r(u))\,du \\
& & \hspace{1.4cm} +e^{-\int_{s_1}^{\tau} \bar\mu (r)\,dr} \frac{1}{f_C(\tau)}  - e^{-\int_{s_1}^{s_2}\bar\mu (r)\,dr} \frac{1}{f_C(s_2)}\ind_{\{s_2<T\}} - e^{-\int_{s_1}^{s_2}\bar\mu (r)\,dr} G'(Y^{s_1,y}(T))\ind_{\{s_2=T\}} \Big] \\
& & \hspace{3.1cm} 
+\ind_{\{s_2\leq\tau\leq T\}} \int_{s_1}^{s_2} e^{-\int_{s_1}^u \bar\mu (r)\,dr} \tR_C(Y^{s_1,y}(u), w(u), r(u))\,du \bigg\}\\
& &  \hspace{0.7cm} +  \inf_{\tau\in\U_{s_1}[s_1,T]} E^{Q_{s_1}}\bigg\{ e^{-\int_{s_1}^{s_2} \bar\mu (r)\,dr} 
 \Big[\int_{s_2}^{\bar\tau} e^{-\int_{s_2}^u \bar\mu (r)\,dr} \tR_C(Y^{s_1,y}(u), w(u), r(u))\,du   \\
& & \hspace{3.8cm} +e^{-\int_{s_2}^{\bar\tau} \bar\mu (r)\,dr} \frac{1}{f_C(\bar\tau)}\ind_{\{{\bar\tau}<T\}}
  +  e^{-\int_{s_2}^T \bar\mu (r)\,dr} G'(Y^{s_1,y}(T))\ind_{\{\bar\tau=T\}}  
\Big]\bigg\}.
\Eey
Assumption-[G] provides an upper bound on the second negative term and hence
\Bey
v(s_1,y) &\geq& 
\inf_{\tau\in\U_{s_1}[s_1,T]} E^{Q_{s_1}}\bigg\{\ind_{\{\tau<s_2\}} \int_{s_1}^{\tau} \hspace{-0.1cm} e^{-\int_{s_1}^u \bar\mu (r)\,dr} \tR_C(Y^{s_1,y}(u), w(u), r(u))\,du \\
& & \hspace{2.7cm} +\ind_{\{\tau<s_2\}} \Big[ e^{-\int_{s_1}^{\tau} \bar\mu (r)\,dr} \frac{1}{f_C(\tau)} 
 - e^{-\int_{s_1}^{s_2}\bar\mu (r)\,dr} \frac{1}{f_C(s_2)} \Big] \bigg\}\\
& &  +  e^{-\int_{s_1}^{s_2} \bar\mu (r)\,dr} \hspace{-0.2cm} \inf_{\tau\in\U_{s_1}[s_1,T]} E^{Q_{s_1}}\bigg\{ 
 \Big[\int_{s_2}^{\bar\tau} e^{-\int_{s_2}^u \bar\mu (r)\,dr} \tR_C(Y^{s_1,y}(u), w(u), r(u))\,du   \\
& & \hspace{2.5cm} +e^{-\int_{s_2}^{\bar\tau} \bar\mu (r)\,dr} \frac{1}{f_C(\bar\tau)}\ind_{\{{\bar\tau}<T\}}
  +  e^{-\int_{s_2}^T \bar\mu (r)\,dr} G'(Y^{s_1,y}(T))\ind_{\{\bar\tau=T\}}  
\Big]\bigg\}.
\Eey
For the latter expectation we proceed as in the proof of Theorem \ref{stop}. We consider the canonical probability space  $(\bar\Omega,\bar P)$ where now $\bar \Omega = \cC_0 [s_1,T]$ (the space of continuous functions on $[s_1,T]$ which are zero at $s_1$) and $\bar P$ is the Wiener measure on $\bar \Omega$. 
We define $W^{Q_{s_1}}(t,\bar\omega)=\bar\omega (t)$ the coordinate mapping on $\cC_0 [s_1,T]$ with 
$\bar\omega=(\bar\omega_1,\bar\omega_2)$, where 
$\bar\omega_1=\{W^{Q_{s_1}}(u) - W^{Q_{s_1}}(s_1): s_1\leq u\leq s_2\},\ \bar\omega_2=\{W^{Q_{s_1}}(u)-W^{Q_{s_1}}(s_2): s_2\leq u\leq T\}$. 
Hence  $\bar P$ is a product measure on $\cC_0[s_1,T]=\cC_0[s_1,s_2]\times\cC_0[s_2,T]$, due to independence of the increments of $W^{Q_{s_1}}$. 
Then,  for each $\bar\omega_1 \in \bar\Omega$ fixed, ${\bar\tau}_{\bar\omega_1} (\newdot) := \tau ({\bar\omega_1}, \newdot) \vee s_2 \in \U_{s_2}[s_2,T]$ with $\tau(\bar\omega_1,\newdot)$  measurable w.r.t. $\cF_{s_2,T}$.
If $E^{{\bar P}_{s_i}}_{\bar\omega_i} \{\cdot\}$ denotes expectation over $\bar\omega_i$ and, for each ${\bar\omega_1} \in {\bar \Omega}$, the expectation over ${\bar\omega_2}$ is denoted by $\Phi (t, Y^{s_1,y}(t); {\bar\tau}_{\bar\omega_1})$ (as in the proof of Theorem \ref{stop}), then the last expectation above is written as $E^{{\bar P}_{s_1}}_{\bar\omega_1}\{\Phi (t, Y^{s_1,y}(t); {\bar\tau}_{\bar\omega_1})\}$.
Therefore for the last infimum above  it holds 
$\dts\inf_{\tau\in\U_{s_1}[s_1,T]} E^{{\bar P}_{s_1}}_{\bar\omega_1}\{\Phi (t, Y^{s_1,y}(t); {\bar\tau}_{\bar\omega_1})\}
\geq \dts\inf_{\tau' \in\U_{s_2}[s_2,T]} E^{{\bar P}_{s_1}}_{\bar\omega_1}\{\Phi (t, Y^{s_1,y}(t); \tau')\}$, 
and hence
\bey\label{geq 2 inf}
& & v(s_1,y) \geq 
\inf_{\tau\in\U_{s_1}[s_1,T]} E^{Q_{s_1}}\bigg\{\ind_{\{\tau<s_2\}} \int_{s_1}^{\tau} \hspace{-0.1cm} e^{-\int_{s_1}^u \bar\mu (r)\,dr} \tR_C(Y^{s_1,y}(u), w(u), r(u))\,du \nonumber\\
& & \hspace{4.6cm} +\ind_{\{\tau<s_2\}} \Big[ e^{-\int_{s_1}^{\tau} \bar\mu (r)\,dr} \frac{1}{f_C(\tau)} 
 - e^{-\int_{s_1}^{s_2}\bar\mu (r)\,dr} \frac{1}{f_C(s_2)} \Big] \bigg\} \\
& &  \hspace{2cm} +\, e^{-\int_{s_1}^{s_2} \bar\mu (r)\,dr}  \hspace{-0.2cm} \inf_{\tau'\in\U_{s_2}[s_2,T]} E^{Q_{s_1}}\bigg\{ 
\int_{s_2}^{\tau'} e^{-\int_{s_2}^u \bar\mu (r)\,dr} \tR_C(Y^{s_1,y}(u), w(u), r(u))\,du \nonumber \\
& & \hspace{4.5cm} +e^{-\int_{s_2}^{\tau'} \bar\mu (r)\,dr} \frac{1}{f_C(\tau')}\ind_{\{{\tau'}<T\}}
  +  e^{-\int_{s_2}^T \bar\mu (r)\,dr} G'(Y^{s_1,y}(T))\ind_{\{\tau'=T\}}  
\bigg\}. \nonumber
\eey

Now consider the second infimum in (\ref{geq 2 inf}). For the first term we have  
\Bey
& & E^{Q_{s_1}}\bigg\{
 \int_{s_2}^{\tau'} e^{-\int_{s_2}^u \bar\mu (r)\,dr} \tR_C(Y^{s_1,y}(u), w(u), r(u))\,du \bigg\} \\
 & & \hspace{1cm} = E^{Q_{s_1}}\bigg\{ 
\int_{s_2}^{\tau'} e^{-\int_{s_2}^u \bar\mu (r)\,dr} E^{Q_{s_1}} \Big\{\tR_C(Y^{s_1,y}(u), w(u), r(u)) \Big|\cF_{s_2, T}\Big\}\,du \bigg\}\\
& & \hspace{1cm} \geq E^{Q_{s_1}}\bigg\{
\int_{s_2}^{\tau'} e^{-\int_{s_2}^u \bar\mu (r)\,dr} \tR_C(E^{Q_{s_1}} \{Y^{s_1,y}(u) |\cF_{s_2, T}\}, w(u), r(u))\,du \bigg\}
\Eey
since $\tR_C$ is convex in $C$ by (\ref{efficiency condition}) and $Q_{s_1}|\cF_{s_2, T}$ is a regular conditional probability distribution. Moreover $C^{s_2} (\newdot)$ is $\cF_{s_2, T}$-measurable and $Y^{s_1,y}(s_2)$ is independent of $\cF_{s_2, T}$. Then, for $u>s_2$, using $\mu_C \geq \|\sigma_C\|^2$  a.e. (cf. (\ref{efficiency condition})$_2$) we get
\Bey 
 E^{Q_{s_1}} \{Y^{s_1,y}(u) |\cF_{s_2, T}\} 
\hspace{-0.2cm} &=& \hspace{-0.2cm} E^{Q_{s_1}} \{Y^{s_1,y}(s_2) \, C^{s_2} (u)|\cF_{s_2, T}\} 
= E^{Q_{s_1}} \{Y^{s_1,y}(s_2) |\cF_{s_2, T}\} \, C^{s_2} (u) \\
\hspace{-0.2cm} &=& \hspace{-0.2cm} E^{Q_{s_1}} \{Y^{s_1,y}(s_2)\} \, C^{s_2} (u) \\
\hspace{-0.2cm} &=& \hspace{-0.2cm} y\, C^{s_2}(u) 
E^{Q_{s_1}}\Big\{e^{- \int_{s_1}^{s_2} [\mu_C (u) - \|\sigma_C(u)\|^2  ]\,du} \cM^{Q_{s_1}}_{s_1} (s_2) 
\Big\} \leq y\, C^{s_2}(u) 
\Eey
where $\cM^{Q_{s_1}}_{s_1} (t) = e^{[\int_{s_1}^t \sigma_C^{\top}(r) dW^{Q_{s_1}}(r) - \frac{1}{2} \int_{s_1}^t \| \sigma_C(r)\|^2 dr]}$ is a martingale under $Q_{s_1}$ (cf. (\ref{martingale})).
Hence $\tR_C$ decreasing in $C$ implies
$\tR_C(E^{Q_{s_1}} \{Y^{s_1,y}(u) |\cF_{s_2, T}\}, w(u), r(u))
\geq \tR_C(y C^{s_2}(u), w(u), r(u))$,
and $Q_{s_1} = Q_{s_1, s_2} \otimes Q_{s_2}$ gives 
\Bey
& & E^{Q_{s_1}}\bigg\{
\int_{s_2}^{\tau'} e^{-\int_{s_2}^u \bar\mu (r)\,dr} \tR_C(Y^{s_1,y}(u), w(u), r(u))\,du \bigg\}\\
& & \hspace{1cm} \geq ~ E^{Q_{s_2}}\bigg\{ 
\int_{s_2}^{\tau'} e^{-\int_{s_2}^u \bar\mu (r)\,dr} \tR_C(y\, C^{s_2}(u), w(u), r(u))\,du \bigg\}
\Eey
being the last integral independent of $\cF_{s_1, s_2}$.

Similar arguments apply to the term involving $G'$ in (\ref{geq 2 inf}), using the fact that $G'$ is non-increasing and convex  by  (\ref{efficiency condition}). Therefore the last infimum in (\ref{geq 2 inf}) is greater or equal $v(s_2, y)$, and we have
\bey \label{new geq 2 inf}
& & \hspace{-1cm} v(s_1,y) \geq
  \inf_{\tau\in\U_{s_1}[s_1,T]} E^{Q_{s_1}}\bigg\{\ind_{\{\tau<s_2\}} \int_{s_1}^{\tau} \hspace{-0.1cm} 
 e^{-\int_{s_1}^u \bar\mu (r)\,dr} \tR_C(Y^{s_1,y}(u), w(u), r(u))\,du  \\
& & \hspace{2.5cm} +\ind_{\{\tau<s_2\}} \Big[ e^{-\int_{s_1}^{\tau} \bar\mu (r)\,dr} \frac{1}{f_C(\tau)} 
 - e^{-\int_{s_1}^{s_2}\bar\mu (r)\,dr} \frac{1}{f_C(s_2)} \Big] \bigg\} 
 + e^{-\int_{s_1}^{s_2} \bar\mu (r)\,dr}\, v(s_2,y). \nonumber
\eey
Now subtracting $\frac{1}{f_C(s_1)}$ from both sides, adding and subtracting $e^{-\int_{s_1}^{s_2} \bar\mu (r)\,dr} \frac{1}{f_C(s_2)} $ on the right-hand side, and recalling that $\bar\mu>0$ give
\Bey
\hspace{-0.8cm} v(s_1,y) -\frac{1}{f_C(s_1)} &\geq& \hspace{-0.3cm}  \inf_{\tau\in\U_{s_1}[s_1,T]} E^{Q_{s_1}}\bigg\{ \ind_{\{\tau<s_2\}} \int_{s_1}^{\tau}   e^{-\int_{s_1}^u \bar\mu (r)\,dr} \tR_C(Y^{s_1,y}(u), w(u), r(u))\,du \\
& &  \hspace{2.5cm} +\ind_{\{\tau<s_2\}} \Big[ e^{-\int_{s_1}^{\tau} \bar\mu (r)\,dr} \frac{1}{f_C(\tau)} 
 - e^{-\int_{s_1}^{s_2}\bar\mu (r)\,dr} \frac{1}{f_C(s_2)} \Big] \bigg\} \\
& &  -\frac{1}{f_C(s_1)}  +e^{-\int_{s_1}^{s_2} \bar\mu (r)\,dr} \frac{1}{f_C(s_2)} +e^{-\int_{s_1}^{s_2} \bar\mu (r)\,dr} \Big[v(s_2,y) - \frac{1}{f_C(s_2)} \Big] \\
&=& \inf_{\tau\in\U_{s_1}[s_1,T]}   E^{Q_{s_1}}\bigg\{\ind_{\{\tau<s_2\}} \int_{s_1}^{\tau} e^{-\int_{s_1}^u \bar\mu (r)\,dr}  \tR_C(Y^{s_1,y}(u), w(u), r(u))\,du \\
&  &    + e^{-\int_{s_1}^{\tau\wedge s_2} \bar\mu (r)\,dr}\hspace{-0.1cm}  \frac{1}{f_C(\tau\wedge s_2)}  -\frac{1}{f_C(s_1)} \bigg\}  
  +e^{-\int_{s_1}^{s_2} \bar\mu (r)\,dr} \Big[v(s_2,y) - \frac{1}{f_C(s_2)} \Big].
\Eey
By condition (\ref{efficiency condition})$_3$  the last expectation above is non-negative, hence  $e^{-\int_0^{s} \bar\mu (r)\,dr} \Big[v(s,y) - \frac{1}{f_C(s)} \Big]$ 
is non-increasing in $s$, but the exponential is decreasing and $v(s,y) - \frac{1}{f_C(s)} \leq 0$ (by (\ref{vv s})), therefore $v(s,y) - \frac{1}{f_C(s)}$ itself must be non-increasing in $s$ and point (i) is proved.
\end{proof}
Now the above results enable us to prove the following Theorem that will be needed in Section~\ref{unifying views}, Theorem~\ref{base capacity indepent of y} to  identify the free boundary with the base capacity in the present time-inhomogeneous model with scrap value at the terminal time $T$. It is a generalization of \cite{CHi}, Proposition 3.3 which was limited to the constant coefficients case.  

\begin{theorem}
\label{boundary decreasing}
Under Assumption-{\rm[det]} and conditions {\rm (\ref{efficiency condition})}, 

~~{\rm(i)}~~~~  $\hat y(s)$ is non-increasing on $[0,T)$;

~~{\rm(ii)}~~~  $\hat y(s)$ is strictly positive on $[0,T)$  if also 
Assumption-{\rm [I]}$_1$ of Section~\ref{Bank-El Karoui approach} holds.
\end{theorem}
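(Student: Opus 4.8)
The strategy is to read off both claims from three facts already in hand: the $s$-monotonicity of $v(s,y)-1/f_C(s)$ (Proposition~\ref{v non-increasing in s}), the $y$-monotonicity of $v(s,\cdot)$ together with the shape of its $1/f_C$-level set (Proposition~\ref{v non-increasing in y}), and the identity $\hat y(s)=\sup\{z\ge0:v(s,z)=1/f_C(s)\}$. As a preliminary observation, for each fixed $s$ the level set $\{z\ge0:v(s,z)=1/f_C(s)\}$ equals $\{z\ge0:v(s,z)\ge1/f_C(s)\}$ (since $v(s,\cdot)\le1/f_C(s)$, by taking $\tau=s$ in (\ref{vv s})); being a sub-level set of the non-increasing function $v(s,\cdot)$ containing $0$, it is an interval $[0,\hat y(s)]$ or $[0,\hat y(s))$. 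In either case one has $v(s,z)=1/f_C(s)$ for $0\le z<\hat y(s)$, and $v(s,z)<1/f_C(s)$ for $z>\hat y(s)$ (this last by Proposition~\ref{v non-increasing in y}(ii)--(iii)).

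For part (i) I would argue by contradiction: suppose $\hat y(s_1)<\hat y(s_2)$ for some $s_1<s_2$ in $[0,T)$ and pick any $y\in(\hat y(s_1),\hat y(s_2))$. By the preliminary observation $v(s_1,y)-1/f_C(s_1)<0$ while $v(s_2,y)-1/f_C(s_2)=0$, contradicting the fact that $s\mapsto v(s,y)-1/f_C(s)$ is non-increasing (Proposition~\ref{v non-increasing in s}(i)). Hence $\hat y$ is non-increasing on $[0,T)$. (Equivalently: each strict sub-level set $\{s:\hat y(s)<y\}=\{s:v(s,y)<1/f_C(s)\}$ is, by Proposition~\ref{v non-increasing in s}, an up-set in $[0,T)$, and a function all of whose strict sub-level sets are up-sets is non-increasing.)

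For part (ii) I would again argue by contradiction. Suppose $\hat y(s_0)=0$ for some $s_0\in[0,T)$. By part (i) then $\hat y\equiv0$ on $[s_0,T)$, so for every $s\in[s_0,T)$ and every $y>0$ the pair $(s,y)$ lies in the Continuation Region and, since the uncontrolled process $Y^{s,y}(t)=y\,C^s(t)$ stays strictly above $\hat y\equiv0$, it never reaches the Stopping Region $\Delta^c$ before $T$; hence the optimal stopping rule for $v(s,y)$ reduces to $\hat\tau(s,y)\equiv T$. Evaluating (\ref{vv s}) at $\tau=T$ therefore gives
\[
v(s,y)=E^{Q_s}\Big\{\int_s^T e^{-\int_s^u\bar\mu(r)\,dr}\,\tR_C(y\,C^s(u),w(u),r(u))\,du+e^{-\int_s^T\bar\mu(r)\,dr}\,G'(y\,C^s(T))\Big\}.
\]
Since $\tR_C(\cdot,w,r)\ge0$ is decreasing in its first argument and, by Assumption-{\rm [I]}$_1$, $\tR_C(C,w(u),r(u))\uparrow+\infty$ as $C\downarrow0$ (legitimate because $(w(u),r(u))\in[k_w,\kap_w]\times[k_r,\kap_r]\subset(0,\kap_w]\times(0,\kap_r]$), while $y\,C^s(u)\downarrow0$ a.s.\ as $y\downarrow0$, monotone convergence forces $v(s,y)\to+\infty$ as $y\downarrow0$. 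This contradicts $v(s,y)\le1/f_C(s)\le1/k_f$. Hence $\hat y(s)>0$ for all $s\in[0,T)$.

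I do not expect a genuine obstacle, as the only substantive analytic input — the $s$-monotonicity of $v-1/f_C$ — has already been established in Proposition~\ref{v non-increasing in s}. The two points needing a little care are purely structural: verifying that the $1/f_C$-level set of $v(s,\cdot)$ is an interval containing $0$, so that ``$\hat y(s)<y$'' is equivalent to ``$v(s,y)<1/f_C(s)$'' (this is what makes part (i) a one-line contradiction); and, in part (ii), making explicit that once $\hat y$ vanishes on $[s_0,T)$ the optimal stopping time is forced to equal $T$ there — precisely the step that converts the vanishing of $\hat y$ into the blow-up of $v$ and hence the contradiction.
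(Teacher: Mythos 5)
Your proposal is correct and follows essentially the same route as the paper: part (i) is obtained by combining Proposition~\ref{v non-increasing in s}(i) with the characterization $\hat y(s)=\sup\{z\ge 0: v(s,z)=1/f_C(s)\}$ (you argue by contradiction, the paper argues directly, which is the same substance), and part (ii) is the paper's own contradiction argument: $\hat y\equiv 0$ after some time forces $\hat\tau(s,y)=T$, so $v(s,y)$ equals the $\tau=T$ expression, which blows up as $y\downarrow 0$ by Assumption-[I]$_1$, contradicting $v\le 1/f_C$. Your extra remarks (the level-set/interval observation and the explicit monotone-convergence step) only make explicit what the paper leaves implicit.
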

\begin{proof}
Point (i) follows from (\ref{hat y in function of v}) and point (i) of Proposition \ref{v non-increasing in s}. In fact, $0\geq v(s_1,y) - \frac{1}{f_C(s_1)} \geq v(s_2,y) - \frac{1}{f_C(s_2)}$ for $s_2>s_1$ implies $\hat y(s_2)\leq \hat y(s_1)$.
The lower boundary of the Borel set $\Delta$, the continuation region of problem (\ref{vv s}), is graph$(\hat y)$, 
and $\Delta$  lies above it. 
The first exit time of $(t,Y^{s,y}(t))$ from $\Delta$ is ${\hat\tau}(s,y)$ by (\ref{tau*bdy}).
To prove point (ii), suppose to the contrary that Assumption-{\rm [I]}$_1$ holds and $\hat y(t)=0$ for some $t<T$, then $\hat y(s) \equiv 0$ for $s \in [t,T)$ since $\hat y$ is non-increasing. Then by the dynamics of $Y^{s,y}$ it follows that  ${\hat\tau}(s,y) =T$ for all $(s,y) \in [t,T)\times (0, \infty)$ and hence
\[
 v(s,y)=E^{Q_{s}}\bigg\{\int_{s}^T e^{-\int_{s}^u \bar\mu (r)\,dr} \tR_C(Y^{s,y}(u), w(u), r(u))\,du
+  e^{-\int_{s}^T \bar\mu (r)\,dr} G'(Y^{s,y}(T)) \bigg\} <\frac{1}{f_C (s)},
\]
which is impossible since the expected value blows up as $y\downarrow 0$ by Assumption-[I]$_1$. 
\end{proof}
As a byproduct we obtain continuity of the optimal investment process.
\begin{corollary}
\label{nu continuous}
Under Assumption-{\rm[det]} and conditions {\rm (\ref{efficiency condition})}, 
the optimal investment process $\hat\nu^{s,y}(t)$ is continuous except possibly for an initial jump, 
 hence so is the optimal capacity process  $C^{s,y}(t;\hat\nu^{s,y})$.
\end{corollary}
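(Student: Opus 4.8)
The plan is to reduce everything to the continuity, on $(s,T)$, of the running supremum $\overline\nu^{s,y}(t)=\sup_{s\leq u<t}\big[\hat y(u)/C^s(u)-y\big]^+$ (with $\overline\nu^{s,y}(s)=0$), recalled in Section~\ref{investment exercise boundary}. Indeed, by the analogue of (\ref{C solution}) for a capacity process started at time $s$ one has $C^{s,y}(t;\hat\nu^{s,y})=C^s(t)[y+\overline\nu^{s,y}(t)]$, with $C^s(\cdot)$ a.s.\ continuous and strictly positive, whereas $\hat\nu^{s,y}$ is recovered from $\overline\nu^{s,y}$ by integrating a continuous, strictly positive factor against $d\overline\nu^{s,y}$ (cf.\ (\ref{optimal nu_l})); hence $\overline\nu^{s,y}$, $\hat\nu^{s,y}$ and $C^{s,y}(\cdot;\hat\nu^{s,y})$ share the same set of continuity points. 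I would also note that conditions (\ref{efficiency condition}) enter here only through Theorem~\ref{boundary decreasing}(i), that is, the monotonicity of $\hat y$.

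Set $g(u):=\big[\hat y(u)/C^s(u)-y\big]^+$, so that $\overline\nu^{s,y}(t)=\sup_{s\leq u<t}g(u)$. First I would record the regularity that comes for free: since $[s,t)=\bigcup_{t'<t}[s,t')$, the map $t\mapsto\overline\nu^{s,y}(t)$ is non-decreasing and left-continuous on $(s,T)$. Next, since $\hat y$ is a deterministic monotone function (Assumption-[det] together with Theorem~\ref{boundary decreasing}(i)) and $C^s(\cdot)$ is a.s.\ continuous and strictly positive, $g$ has one-sided limits at every $t\in(s,T)$, and the monotonicity of $\hat y$ forces $\hat y(t^-)\geq\hat y(t)\geq\hat y(t^+)$, whence, applying the continuous non-decreasing map $x\mapsto[x-y]^+$, $g(t^-)\geq g(t)\geq g(t^+)$, i.e.\ $g$ has no upward jumps. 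A short computation with the decreasing intervals $[s,t')$ as $t'\downarrow t$, using $g(t^+)\leq g(t)$, then gives $\overline\nu^{s,y}(t+)=\max\big(\overline\nu^{s,y}(t),g(t)\big)$, so right-continuity of $\overline\nu^{s,y}$ at $t\in(s,T)$ is equivalent to $g(t)\leq\overline\nu^{s,y}(t)=\sup_{s\leq u<t}g(u)$; and this holds because $g(t)\leq g(t^-)=\lim_{u\uparrow t}g(u)\leq\sup_{s\leq u<t}g(u)$. Therefore $\overline\nu^{s,y}$ is a.s.\ continuous on $(s,T)$, while at $t=s$ one computes $\overline\nu^{s,y}(s+)=g(s)=[\hat y(s)-y]^+$ (using $C^s(s)=1$): this is the only possible discontinuity, namely the instantaneous jump raising the capacity to $\hat y(s)$ when $y<\hat y(s)$, in accordance with the remark following Theorem~\ref{control by base capacity}. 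By the first paragraph, $\hat\nu^{s,y}$ and $C^{s,y}(\cdot;\hat\nu^{s,y})$ inherit this continuity.

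The one delicate point (and the reason the statement is not immediate) is that $\hat y$ is \emph{a priori} only monotone, not known to be continuous, so continuity of the running supremum cannot be obtained merely from continuity of its integrand. What makes the argument work is precisely that a non-increasing boundary can jump only downwards; combined with the continuity and strict positivity of $C^s$, this excludes upward jumps of $g$ and thereby forces $\overline\nu^{s,y}$ to be continuous on $(s,T)$.
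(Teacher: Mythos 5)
Your proof is correct, but it takes a genuinely different route from the paper's. The paper argues at the level of the optimal stopping time: using the connectedness of the $y$- and $s$-sections of the continuation region (Propositions \ref{v non-increasing in y} and \ref{v non-increasing in s}) together with the non-increasing boundary, it shows that $y\mapsto{\hat\tau}(s,y)$ is \emph{strictly} increasing on $y>\hat y(s)$ (if $z>y>\hat y(s)$, then at $u={\hat\tau}(s,y)$ the path $(u,Y^{s,z}(u))$ is still strictly inside $\Delta$ because the boundary does not rise), and then concludes that the left-continuous inverse (\ref{bar nu s}) has no jumps after time $s$. You instead bypass the stopping times entirely and work with the closed-form expression (\ref{bar nu by hat y}): since $\hat y$ is non-increasing (Theorem \ref{boundary decreasing}(i)) and $C^s$ is a.s.\ continuous and strictly positive, the reflector $g(u)=[\hat y(u)/C^s(u)-y]^+$ can only jump downwards, and a running supremum of such a function is continuous; the computation $\overline\nu^{s,y}(t+)=\max(\overline\nu^{s,y}(t),g(t))$ together with $g(t)\le g(t^-)\le\overline\nu^{s,y}(t)$ is exactly the right check, and your passage from $\overline\nu^{s,y}$ to $\hat\nu^{s,y}$ and $C^{s,y}(\cdot;\hat\nu^{s,y})$ via the positive continuous densities is sound (the relevant formula is (\ref{hnu s}) rather than its $s=0$ analogue (\ref{optimal nu_l}), a cosmetic point). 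Your route is more elementary once (\ref{bar nu by hat y}) is granted, isolates that only the monotonicity of $\hat y$ is used from conditions (\ref{efficiency condition}), and yields the explicit size $[\hat y(s)-y]^+$ of the initial jump; the paper's route does not presuppose the rewriting (\ref{bar nu by hat y}) and instead exhibits directly why $\hat\tau(s,\cdot)$ has no flat stretches above $\hat y(s)$, which is the structural fact behind the continuity of its inverse. Both arguments hinge on the same key input, the non-increasing property of the free boundary.
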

\begin{proof}
 Recall that ${\hat\tau}(s,y)$ is non-decreasing in $y$ a.s. as pointed out below (\ref{bar nu s}). 
Let $z>y>\hat y(s)$, then the connectness properties proved in Proposition \ref{v non-increasing in y} and Proposition \ref{v non-increasing in s} imply $(s,z), (s,y) \in \Delta$ and ${\hat\tau}(s,y)>s$. Also $(t,Y^{s,y}(t))$ lies strictly below $(t,Y^{s,z}(t))$ since $Y^{s,z}(t)-Y^{s,y}(t)=(z-y)C^s(t)>0$. Hence at $u={\hat\tau}(s,y)$ the process $(u,Y^{s,y}(u))$ lies in the boundary, but $(u,Y^{s,z}(u))$ still lies in the interior of $\Delta$ since its boundary is non-increasing. It follows that ${\hat\tau}(s,y)<{\hat\tau}(s,z)$. So ${\hat\tau}(s,y)$ is strictly increasing a.s. on $y>\hat y(s)$, therefore
its left-continuous inverse (modulo a shift) $\overline\nu^{s,y}$  is continuous except possibly for an initial jump, and so is $C^{s,y}(t;\hat\nu^{s,y})$.
\end{proof}

\section{A unifying view on the optimal investment boundary}
\label{unifying views}
Under Assumption-[I] of Section~\ref{Bank-El Karoui approach}, Assumption-[det] of Section~\ref{investment exercise boundary}  and conditions (\ref{efficiency condition}) of Proposition \ref{v non-increasing in s}, in this Section we manage to show that the investment exercise boundary obtained by variational methods coincides with the base capacity provided by the Representation Theorem approach. Hence in the presence of scrap value at the terminal time $T$ getting a unifying view on the curve at which it is optimal to invest is possible but it requires adding extra conditions.   
 
Certainly  uniqueness of the optimal control, Proposition \ref{hnu s.thm} and Theorem \ref{control by base capacity} 
imply the identification of $\hat{\nu}^{0,y}(t)$ with the optimal control ${\nu}_{l^{\star}}^{y}(t)$ obtained via the Bank and El Karoui Representation Theorem.  
Moreover $\hat y(t)$ is strictly positive and its non-increasing property implies upper right-continuity \`{a} la Bourbaki, properties naturally enjoyed by the base capacity $l^{\star}_{y} (t)$.

Recalling (\ref{vv s}), writing (\ref{Gamma y,xi}) under the new probability measure $Q_0$ and taking care of the conditioning by arguments as in the proof  of Theorem \ref{stop} (see also \cite{ChF}, Proposition 5.2) provide \beq\label{Gamma -1/y same as vv s}
\hspace{0.2cm} \ti\Gamma^{y;-\frac{1}{y}} (t):= e^{\int_0^t \mu_F (r) dr} \dts\frac{1}{C^o(t)} \Gamma^{y;-\frac{1}{y}} (t) = v(t,yC^o(t)) \leq \frac{1}{f_C(t)}, \quad t\in (0,T), ~\mbox{ a.s.} 
\eeq
but, contrary to the no-scrap value case of \cite{ChF}, Proposition 5.3, $\ti\Gamma^{y;\xi} (t)$ cannot be written in terms of $v$ for a generic value of $\xi$ due to the $y$-parameter dependence of $\Gamma^{y;\xi}$ (carried over to $\xi^{\star}_{y}(t)$ and $l^{\star}_{y} (t)$) which accounts for the term $G'(y C^o(T))$ due to the scrap value.  Therefore (cf. (\ref{base capacity by xi star})),  
for $t\in [0,T)$,
\beq\label{l star compared to v}
\hspace{0.5cm}
l^{\star}_{y} (t) := -\frac{C^o(t)}{\xi^{\star}_{y} (t)} = \frac{C^o(t)}{-\sup\Big\{\xi<0 : \ti\Gamma^{y;\xi} (t) = \frac{1}{f_C(t)} \Big\}} = \sup\Big\{\hspace{-0.1cm} -\frac{1}{\xi} C^o(t) >0 : \ti\Gamma^{y;\xi} (t) = \frac{1}{f_C(t)} \Big\}
\eeq
  does not appear immediately linked to $\hat y(t)$  (cf. (\ref{hat y in function of v})).
So some work is needed in order to compare the two approaches. Notice that, for $t\in [0,T)$ and $z>y>0$, $l^{\star}_{y}(t) \geq l^{\star}_{z}(t)$ since $\ti\Gamma^{y;\xi} (t) \geq  \ti\Gamma^{z;\xi} (t)$ by the decreasing property of  $G'(\cdot)$, which together with the decreasing property of $\tR_C(\cdot,w, r)$ also implies 
\[
\frac{1}{f_C(t)} \geq  \ti\Gamma^{y;-\frac{1}{y}} (t) \geq \ti\Gamma^{y;-\frac{1}{z}} (t) \geq \ti\Gamma^{z;-\frac{1}{z}} (t). 
\]
The main result of this Section is the following
\begin{theorem}\label{base capacity indepent of y}
Under Assumption-{\rm [I]},  Assumption-{\rm [det]} and conditions (\ref{efficiency condition}),  for $t\in [0,T)$, the process  $l^{\star}_{y}(t) $ equals $\hat y(t)$ a.s., hence it may be assumed deterministic and independent of $y$.
\end{theorem}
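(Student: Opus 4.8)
The plan is to show the two-sided inequality $l^{\star}_y(t)\le\hat y(t)$ and $l^{\star}_y(t)\ge\hat y(t)$ a.s.\ for each fixed $t\in[0,T)$, and then conclude that $l^{\star}_y$ cannot depend on $y$ (since $\hat y$ does not). The first inequality should be the easy direction: by (\ref{Gamma -1/y same as vv s}) we have $\ti\Gamma^{y;-1/y}(t)=v(t,yC^o(t))\le 1/f_C(t)$, and by the characterization (\ref{l star compared to v}), $l^{\star}_y(t)=\sup\{-C^o(t)/\xi:\ti\Gamma^{y;\xi}(t)=1/f_C(t)\}$. When $yC^o(t)\ge\hat y(t)$ (i.e.\ $(t,yC^o(t))\in\Delta^c$) one expects $\xi^{\star}_y(t)=-1/y$ exactly, so $l^{\star}_y(t)=yC^o(t)\cdot(1/(yC^o(t)))\cdot C^o(t)$... more carefully: one first handles the region where the capacity has overshot the boundary, showing there the base capacity tracks the current level, which is $\le\hat y$; and the genuine content is comparing in the continuation region. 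The cleaner route is to use the optimal control identification already noted at the start of this section: $\hat\nu^{0,y}={\nu}_{l^{\star}}^{y}$, hence from (\ref{C solution}), (\ref{overline nu_l}) and (\ref{bar nu by hat y}) the two capacity processes that track $l^{\star}_y$ and $\hat y$ coincide, which gives $\sup_{0\le u<t}[l^{\star}_y(u)/C^o(u)\vee y]=\sup_{0\le u<t}[\hat y(u)/C^s(u)\vee\cdots]$ pathwise; the difficulty is that this identity of \emph{running suprema} does not immediately yield pointwise equality of $l^{\star}_y$ and $\hat y$.

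To extract pointwise equality from equality of running maxima, I would exploit that \emph{both} $l^{\star}_y(\cdot)$ and $\hat y(\cdot)/C^o(\cdot)$... wait, $\hat y$ is non-increasing (Theorem \ref{boundary decreasing}(i)) and $C^o$ is a positive martingale-type process, so $\hat y(u)/C^o(u)$ is not monotone; the right normalization is different. Instead, the key structural facts to combine are: (a) $\hat y$ is non-increasing and strictly positive on $[0,T)$ by Theorem \ref{boundary decreasing}; (b) $l^{\star}_y$ is upper right-continuous \`a la Bourbaki; (c) at a time $\tau$ of strict increase of $\overline\nu^{s,y}_{l^*}$ one has $C^o(\tau)\sup_{0\le u'<\tau}[l^{\star}_y(u')/C^o(u')\vee y]$ equal to $l^{\star}_y(\tau)$ in the limit from the right, and simultaneously the tracked capacity equals $\hat y(\tau)$ there (by the free-boundary description below (\ref{bar nu by hat y})). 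So on the support of $d\overline\nu$ the two thresholds agree. Off that support, i.e.\ when $(t,Y^{t,y}(t))$ is strictly inside $\Delta$, one must argue directly on the representation equations: plug a deterministic candidate $\hat y$ into (\ref{equation l star}) and check it solves that representation problem; uniqueness of the solution of (\ref{equation l star}) (Corollary \ref{equation l-star}) then forces $l^{\star}_y(t)=\hat y(t)$. Verifying that $\hat y$ satisfies (\ref{equation l star}) is essentially the content of the Appendix's variational characterization of $v$ together with (\ref{Gamma -1/y same as vv s}) and the fact, from Proposition \ref{v non-increasing in y} and Proposition \ref{v non-increasing in s}, that the continuation/stopping sets are genuine intervals in both $y$ and $s$ (so that $v(t,z)=1/f_C(t)$ exactly on $z\in(0,\hat y(t)]$ and $<1/f_C(t)$ above).

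More concretely, the decisive step uses strict positivity of $\hat y$ (Theorem \ref{boundary decreasing}(ii)): it guarantees that for any $y>0$ and any $t$, one can choose a starting capacity below $\hat y$ on all of $[0,t]$, so that the controls (\ref{bar nu by hat y}) and (\ref{overline nu_l}) are both active and the running-supremum identity can be "differentiated" — at the first time the tracked capacity process of (\ref{overline nu_l}) touches its threshold, that threshold must equal the value of $\hat y$ there, because the Bank--El Karoui threshold $l^{\star}_y$ and the free boundary $\hat y$ both describe the \emph{same} reflecting barrier for the \emph{same} optimal capacity process, and strict positivity rules out the degenerate possibility $l^{\star}_y\equiv 0$ that would otherwise be consistent with a never-binding barrier. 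Iterating over a dense set of starting levels $y$ (using $l^{\star}_{y}(t)\ge l^{\star}_{z}(t)$ for $z>y$, monotone in $y$, noted after (\ref{l star compared to v})) pins down $l^{\star}_y(t)=\hat y(t)$ for a.e.\ $t$ first, then for all $t\in[0,T)$ by the upper right-continuity of $l^{\star}_y$ and the right-continuity of the non-increasing $\hat y$. Finally, since the right-hand side $\hat y(t)$ is deterministic and free of $y$, so is $l^{\star}_y(t)$, completing the proof.

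\textbf{Main obstacle.} The genuine difficulty is passing from the pathwise identity of the two tracked capacity processes (equivalently, of two running suprema) to the pointwise identity $l^{\star}_y(t)=\hat y(t)$ on the full continuation region, not merely on the random support of the investment measure. This is exactly where strict positivity of $\hat y$ (Theorem \ref{boundary decreasing}(ii)) and the interval structure of the stopping regions (Propositions \ref{v non-increasing in y} and \ref{v non-increasing in s}) must be brought in, and where one needs to be careful that $l^{\star}_y$ is being compared to a deterministic object so that the uniqueness clause of Corollary \ref{equation l-star} can be invoked after substituting $\hat y$ into (\ref{equation l star}).
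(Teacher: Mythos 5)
You have correctly identified the ingredients (strict positivity and monotonicity of $\hat y$ from Theorem \ref{boundary decreasing}, the interval structure from Propositions \ref{v non-increasing in y} and \ref{v non-increasing in s}, the identity (\ref{Gamma -1/y same as vv s})), and you correctly diagnose that equality of the two optimal controls, i.e.\ of running suprema, does not give pointwise equality of the thresholds. But the step you propose to close this gap is not proved: you want to plug the deterministic candidate $\hat y$ into the representation problem (\ref{equation l star}) and invoke the uniqueness clause of Corollary \ref{equation l-star}, asserting that the verification is ``essentially the content of the Appendix's variational characterization of $v$ together with (\ref{Gamma -1/y same as vv s})''. It is not. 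The identity (\ref{Gamma -1/y same as vv s}) covers only the single value $\xi=-\frac1y$; for generic $\xi$ the paper explicitly remarks (right after that display) that $\ti\Gamma^{y;\xi}$ \emph{cannot} be expressed through $v$, precisely because of the $y$-dependent scrap term $G'(yC^o(T))$. Verifying (\ref{equation l star}) for $\hat y$ means proving an equality of conditional expectations at \emph{every} stopping time $\tau$, with the supremum restarted at $\tau$ and with the $\vee\, y$ inside $G'$; this is essentially Theorem \ref{integral equation hat y}, which in the paper is a \emph{consequence} of the present theorem, not an input, and proving it directly from the stopping problem would require a priori boundary regularity or local-time machinery that the paper is designed to avoid. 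So your route is either circular or leaves the hardest step unproved; likewise your claim that ``on the support of $d\overline\nu$ the two thresholds agree'' only controls the running maximum, not $l^{\star}_y$ at individual times.

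The paper's actual argument, which your sketch never reaches, is an $\omega$-wise case analysis on whether $yC^o(\omega,t)$ lies below or above $\hat y(t)$. In each case one writes $l^{\star}_y(\omega,t)$ as a supremum of levels $z$ with $\ti\Gamma^{y;-1/z}(\omega,t)=\frac1{f_C(t)}$, compares $\ti\Gamma^{y;-1/z}$ with $\ti\Gamma^{z;-1/z}=v(t,zC^o(t))$ using that the two stopping problems differ only through the scrap term (so their essential infima over $\tau<T$, which do not involve $G'$ and hence not $y$, coincide), and derives from the assumption $l^{\star}_y(\omega,t)\neq\hat y(t)$ that the optimal stopping time of one of them must equal $T$ — equivalently that the process $zC^o(\omega,t)C^t(u)$ never reaches the boundary before $T$ — which is impossible unless $\hat y\equiv 0$ after $t$; this is exactly where Theorem \ref{boundary decreasing}(ii) enters. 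Your proposal invokes strict positivity only rhetorically (``rules out a never-binding barrier'') and offers no mechanism for converting the control identification into pointwise equality off the support of the investment measure; that conversion, carried out through the comparison of the two families of stopping problems above, is the substance of the proof and is missing from your proposal.
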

\begin{proof}
If $\omega$ is such that $0<yC^o(\omega, t) \leq {\hat y} (t)$ (cf. (\ref{hat y in function of v})), 
then $\ti\Gamma^{y;-\frac{1}{y}}(\omega,t) = \frac{1}{f_C(t)}$  and 
$\ti\Gamma^{y;-\frac{1}{z}} (\omega,t) = \frac{1}{f_C(t)}$ for all $0<z<y$ as well. 
Therefore
\Bey
& & l^{\star}_{y}(\omega,t) = \frac{C^o(\omega,t)}{-\sup\Big\{-\frac{1}{z}\in [-\frac{1}{y}, 0) : \ti\Gamma^{y;-\frac{1}{z}} (\omega,t) = \frac{1}{f_C(t)} \Big\}} = \frac{C^o(\omega,t)}{\inf\Big\{\frac{1}{z}\in (0, \frac{1}{y}] : \ti\Gamma^{y;-\frac{1}{z}} (\omega,t) = \frac{1}{f_C(t)} \Big\}} \\
& &  = C^o(\omega,t) \sup\Big\{z \geq y : \ti\Gamma^{y;-\frac{1}{z}} (\omega,t) = \frac{1}{f_C(t)} \Big\}
\, \geq \, C^o(\omega,t)\sup\Big\{z \geq y : v(t, zC^o(\omega,t))= \frac{1}{f_C(t)} \Big\}. 
\Eey
Also
\Bey
& & \Big\{zC^o(\omega,t) \geq yC^o(\omega,t) : v(t, zC^o(\omega,t))= \frac{1}{f_C(t)} \Big\} 
\subset \Big\{z' \geq yC^o(\omega,t) : v(t, z')= \frac{1}{f_C(t)} \Big\}\\
& & \hspace{0.5cm} = \Big\{ \ti z' (\omega) C^o(\omega,t)  \geq yC^o(\omega,t) : 
v(t, \ti z' (\omega) C^o(\omega,t))= \frac{1}{f_C(t)}\Big\} 
\Eey 
with $\ti z' (\omega) := \frac{z'}{C^o(\omega,t)}$. Clearly the last set is contained into the first one, hence all sets coincide and
\beq\label{first step l-star}
 l^{\star}_{y}(\omega,t) \geq \sup\Big\{z' \geq yC^o(\omega,t) : v(t, z')= \frac{1}{f_C(t)} \Big\} = \hat y (t).
\eeq
We claim $l^{\star}_{y}(\omega,t) = \hat y (t)$. In fact assume not, then there exists $z_o > {\hat y}(t)$ such that $\ti\Gamma^{y;-\frac{C^o(\omega,t)}{z_o}} (t) = \frac{1}{f_C(t)}$. 
It follows that $\ti\Gamma^{y;-\frac{1}{z}} (\omega,t) = \frac{1}{f_C(t)}$
for all $\frac{\hat y (t)}{C^o(\omega,t)} <z \leq \frac{z_o}{C^o(\omega,t)}$ since $\tR_C$ and $G'$ are decreasing in $C$. As $\ti\Gamma^{y;-\frac{1}{z}} (\omega,t)$ is obtained at $\tau^{y;-\frac{1}{z}}(\omega,t)=t$, it coincides with the infimum over  $\tau \in [t,T)$ (which does not involve $G'$, thus not $y$); that is,
\[
\frac{1}{f_C(t)} = \mbox{ess}\dts\inf_{\hspace{-0.5cm} t\leq\tau < T} \ti\gamma^{y;-\frac{C^o(\omega,t)}{z_o}} (\tau)
= \mbox{ess}\dts\inf_{\hspace{-0.5cm} t\leq\tau < T} \ti\gamma^{y;-\frac{1}{z}} (\tau)
= \mbox{ess}\dts\inf_{\hspace{-0.5cm} t\leq\tau < T} \ti\gamma^{z;-\frac{1}{z}} (\tau)
\]
where, for convenience, we have denoted $\ti\gamma^{y;\xi} (\tau)$
the argument of the $\mbox{ess}\dts\inf$ of the corresponding $\ti\Gamma$. 
Also, $\hat y (t) < z C^o(\omega,t)$ implies 
$\ti\Gamma^{z;-\frac{1}{z}} (\omega,t) =v(t, zC^o(\omega,t)) < \frac{1}{f_C(t)}$, and hence $\ti\Gamma^{z;-\frac{1}{z}} (\omega,t)$ is not obtained before $T$  for all $zC^o(\omega,t) \in (\hat y (t), z_o]$ (cf. (\ref{tau*bdy})). In other words,  the process $z C^o(\omega,t) C^t(u)$, for $u>t$, never reaches the boundary $\hat y(u)$ before $T$, and that may only happen if $\hat y (u)\equiv 0$ for $u>t$, contraddicting point (ii) of Theorem \ref{boundary decreasing},
or else  if $\omega$ is in a null set.

On the other hand, if $\omega$ is such that $yC^o(\omega, t) > {\hat y} (t)$, 
then  $\ti\Gamma^{y;-\frac{1}{z}} (\omega,t) \leq \ti\Gamma^{y;-\frac{1}{y}}(\omega,t)< \frac{1}{f_C(t)}$  for all $z>y$; 
as well as 
$\ti\Gamma^{y;-\frac{1}{y}} (\omega,t) \leq \ti\Gamma^{y;-\frac{1}{z}} (\omega,t) \leq \ti\Gamma^{z;-\frac{1}{z}} (\omega,t)< \frac{1}{f_C(t)} \, (= \ti\Gamma^{\frac{{\hat y}(t)}{C^o(\omega,t)} ;-\frac{C^o(\omega,t)}{{\hat y}(t)}} (t))$  for all $\frac{{\hat y}(t)}{C^o(\omega,t)} <z <y$. 
Therefore to determine $l^{\star}_{y} (\omega,t)$  it suffices to consider  $z \leq \frac{{\hat y}(t)}{C^o(\omega,t)}$; in fact,
\Bey
&  & l^{\star}_{y}(\omega,t)  = \frac{C^o(\omega,t)}{-\sup\Big\{-\frac{1}{z}\in (- \infty, -\frac{C^o(\omega,t)}{{\hat y}(t)}] : \ti\Gamma^{y;-\frac{1}{z}} (\omega,t) = \frac{1}{f_C(t)} \Big\}} \\
&  &= C^o(\omega,t) \sup\Big\{z \leq \frac{{\hat y}(t)}{C^o(\omega,t)} : \ti\Gamma^{y;-\frac{1}{z}} (\omega,t) = \frac{1}{f_C(t)} \Big\}
\leq \sup\Big\{zC^o(\omega,t) \leq {\hat y}(t) : \ti\Gamma^{z;-\frac{1}{z}} (\omega,t) = \frac{1}{f_C(t)} \Big\}
\\
&  &
= \sup\Big\{zC^o(\omega,t) \leq {\hat y}(t) : v(t, zC^o(\omega,t)) =  \frac{1}{f_C(t)} \Big\},
\Eey
hence also
\beq\label{second step l-star}
l^{\star}_{y}(\omega,t) \leq \sup\Big\{z' \leq {\hat y}(t) : v(t, z') = \frac{1}{f_C(t)} \Big\} \, =\,  \hat y (t).  
\eeq
To prove $l^{\star}_{y}(\omega,t) = \hat y (t)$ assume not, then for $z_o \in (l^{\star}_{y}(\omega,t),\hat y (t)]$ 
it holds $\ti\Gamma^{y;-\frac{C^o(\omega,t)}{z_o}} (\omega,t) < \frac{1}{f_C(t)}$ 
and $\ti\Gamma^{\frac{z_o}{C^o(\omega,t)};-\frac{C^o(\omega,t)}{z_o}} (\omega,t)=v(t, z_o) = \frac{1}{f_C(t)}$ 
with  $\tau^{\frac{z_o}{C^o(\omega,t)};-\frac{C^o(\omega,t)}{z_o}}(\omega,t)=t$. It follows that must necessarily be $\tau^{y;-\frac{C^o(\omega,t)}{z_o}}(\omega,t)=\inf\Big\{u\in [t, T)\! : \ti\Gamma^{y;-\frac{C^o(\omega,t)}{z_o}} (\omega,u) = \frac{1}{f_C(u)} \Big\}\wedge T =T$ (see (\ref{equation tau xi}) and (\ref{best time up to T})), since the two $\ti\Gamma$ differ only for the scrap value, 
and this is a contraddiction since
\[ 
\mbox{ess}\dts\inf_{\hspace{-0.5cm} t\leq\tau < T} \ti\gamma^{y;-\frac{C^o(\omega,t)}{z_o}} (\tau)
\, = \,\mbox{ess}\dts\inf_{\hspace{-0.5cm} t\leq\tau \leq T} \ti\gamma^{\frac{z_o}{C^o(\omega,t)};-\frac{C^o(\omega,t)}{z_o}} (\tau)
\, = \,\frac{1}{f_C (t)}.
\]
\end{proof}
The above Theorem has an interesting consequence. The integral equation in Corollary \ref{equation l-star},  whose unique upper right-continuous, positive solution is $l^* (t)$, may be used to characterize uniquely the investment exercise boundary ${\hat y}(t)$ (see also \cite{ChF}, Theorem 5.5 for the no scrap value model). 
In fact, by writing the equation for $\tau=t \in [0,T)$ and 
$y = \frac{\hat y(t)}{C^o(t)}$, switching to the new probability measure $Q :=Q_{t}$, using the fact that $\frac{C^o (u)}{C^o (t)}$ for $u>t$ is independent of ${\cal F}_t$, we obtain the following 
\begin{corollary}\label{integral equation hat y}
Under Assumption-{\rm [I]},  Assumption-{\rm [det]} and conditions (\ref{efficiency condition}), the investment exercise boundary $\hat y (t)$ is the unique upper right-continuous, strictly positive solution of the integral equation 
\bey\label{integral equation}
& & \hspace{-1cm} E^{Q_t}\bigg\{\int_{t}^{T} e^{-\int_{t}^u \bar\mu (r)\,dr} \tR_C \Big(
\sup_{t\leq u' < u} \hat y (u') C^{u'}(u), ~w(u), r(u) \Big) \,du \\
& & \hspace{2.9cm} + e^{-\int_{t}^T \bar\mu (r)\,dr} G' \Big(\sup_{t\leq u' < T}  \hat y (u') C^{u'}(T)\Big) \bigg\} ~=~ \frac{1}{f_C(t)},\qquad \forall t\in[0,T). \nonumber
\eey
\end{corollary}
This is a useful result as it allows to find numerically the investment exercise boundary for any choice of ``cost'' functions $w$ and $r$, in a quite complex singular stochastic control problem of capacity expansion. 

In conclusion, putting together the steps, the following algorithm for the continuous optimal investment process $\hat{\nu}^{s,y}$ holds, \vspace{-0.2 cm}
\begin{enumerate}
\item given the cost functions $w(t), r(t)$, use equality (\ref{formula Appendix of[8]}) to calculate the partial $C$-derivative $\tR_C(C, w, r)$ of the reduced production function; \vspace{-0.7cm}\\
\item plug it into (\ref{integral equation}) and solve such integral equation (numerically after a discretization) to find its unique upper right-continuous, positive solution ${\hat y}(t)$;\vspace{-0.7cm}\\
\item plug ${\hat y}(t)$ into (\ref{bar nu by hat y}) to determine the continuous process $\overline\nu^{s,y}$; \vspace{-0.7cm}\\
\item finally plug $\overline\nu^{s,y}$ into (\ref{hnu s}) and obtain the optimal investment process $\hat{\nu}^{s,y}$.
\end{enumerate}

\appendix
\section{Variational approach review}
\label{variational approach to free boundary}
\renewcommand{\theequation}{A.\arabic{equation}}

Under Assumption-{\rm [det]} of Section~\ref{investment exercise boundary}, as in \cite{CH1}, \cite{CHi}, \cite{ChF},   the problem may be imbedded by considering the dynamics of the capacity process $C^{s,y}(t;\nu)$ starting at time $s \in [0,T]$ from $y>0$ and controlled by $\nu$, 
\beq
\label{C.eq starting at s}
\left\{
\matrix{
dC^{s,y}(t;\nu) = C^{s,y}(t;\nu) [-\mu_C(t)  dt + \sigma_C^{\top}(t) dW(t)] + f_C (t) d\nu(t),              
\qquad t\in (s, T], \hfill\cr
C^{s,y}(s;\nu) =y> 0, \hfill\cr
}
\right.
\eeq
with $\nu \in {\cal S}_s \! := \! \{ \nu: [s,T] \rightarrow \real: \!\!\! \mbox{\rm ~non-decreasing, lcrl, adapted  process s.t.} \,\nu(s)=0 \!\mbox{\rm ~ a.s.} \}$. 
Set  (cf. (\ref{martingale}))
\beq
\label{C^s.eq}
C^s (t):=\frac{C^o(t)}{C^o(s)} = e^{-\int_s^t \mu_C(r)dr} \cM_s (t),
\eeq
then the solution of (\ref{C.eq starting at s}) is 
$C^{s,y}(t;\nu) =C^s(t)\Big[y + \int_{[s,t)}\frac{f_C(u)}{C^s(u)}\,d\nu(u)\Big] =C^s(t)[y + {\overline\nu}(t)] $
where ${\overline\nu}(t) := \int_{[s,t)}\frac{f_C(u)}{C^s(u)}\,d\nu(u)$. 
The expected total discounted profit plus scrap value, net of investment, is 
\begin{eqnarray} \label{Js}
& & \hspace{-1cm}  
\cJ_{s,y}(\nu)= E\bigg\{\int_s^T  e^{-\int_s^t \mu_F (u) du} \ti R(C^{s,y}(t;\nu), w(t), r(t))\, dt 
        + e^{- \int_s^T \mu_F (u) du}G(C^{s,y}(T;\nu))  \nonumber\\ 
 &  & \hspace{1.5cm}  
        - \int_{[s, T)}  e^{- \int_s^t \mu_F (u) du} \,d\nu(t) \bigg\},   
\end{eqnarray}
and the firm's optimal capacity expansion problem is  
\beq\label{cap prob from s}
V(s, y) := \dts{\max_{\nu \in {\cal S}_s}}\ \cJ_{s,y}(\nu). 
\eeq 
Notice that, due to Assumption-[det], $\nu(t)$  and $C^s(t)$ are ${\cal F}_{s,t}$-measurable  where 
\[
{\cal F}_{s,t} := \sigma\{W(u)-W(s) : s\leq u \leq t \}. 
\]

The variational approach to the singular stochastic control problem (\ref{cap prob from s}) is based on the study of the optimal stopping problem naturally associated to it, 
 \beq\label{Z s}
Z^{s,y}(t):= \mbox{\rm ess}\hspace{-0.3cm}\inf_{\hspace{-0.3cm}\tau\in\U_s[t,T]} E\{\zeta^{s,y}(\tau)\,|\,{\cal F}_{s,t}\}, \qquad t\in [s,T],  
\eeq
where $\U_s [t,T]$ denotes the set of all $\{{\cal F}_{s,u}\}_{u\in [t,T]}$- stopping times taking values in $[t,T]$ (i.e. such that $\{\tau <u\} \in {\cal F}_{s,u}$), and 
\begin{eqnarray}\label{opport cost s}
\hspace{0.5cm}\zeta^{s,y}(t)&:=&
 \int_s^t e^{-\int_s^u \mu_F(r)\,dr}\, C^s(u) \tR_C(yC^s(u), w(u), r(u) )\,du \\
& &\hspace{0.3cm}+e^{-\int_s^t\mu_F(r)\,dr}\,\frac{C^s(t)}{f_C(t)} \ind_{\{t<T\}} + ~ e^{-\int_s^T\mu_F(r)\,dr}\,C^s(T)G'(yC^s(T)) \ind_{\{t=T\}}\nonumber
\end{eqnarray}
is the {\em opportunity cost of not investing until time} $t$ when the capacity is $y$ at time $s$.
Then the {\em optimal risk of not investing until time} $t$ is defined as 
\beq\label{v s}
v(s,y) := \cZ^{s,y}(s),
\eeq
where  $\cZ^{s,y}(\newdot)$ is a modification of $Z^{s,y}(\newdot)$ having right-continuous paths with left limits (``rcll"). 
Hence, up to a null set, $v(s,y) = Z^{s,y}(s)$. 

Consider the optimal stopping time
\beq\label{optimal tau s}
{\hat\tau}(s,y):=\inf\{t\in[s,T):\cZ^{s,y}(t)= \zeta^{s,y}(t)\}\wedge T,
\eeq
and take its left-continuous inverse (modulo a shift) 
\beq\label{bar nu s}
\left\{
\matrix{
\overline\nu^{s,y}(t):=\left[\sup\{z\geq y: {\hat\tau}(s,z+)<t\}-y\right]^+~\mathrm{~if~} t>s,\hfill\cr
\overline\nu^{s,y}(s) :=0. \hfill\cr
} 
\right.
\eeq
Notice that ${\hat\tau}(s,y)$ is non-decreasing in $y$ a.s. (cf. \cite{BK}, Lemma 1).
The following result follows by arguments as in the proof of  \cite{CHi}, Theorem~3.1, based on the  estimates in Proposition \ref{cJprop} and the strict concavity of $\cJ_{s,y}$ on $\cS_s$. 
\begin{proposition} \label{hnu s.thm}
Under Assumption-{\rm [det]}, for fixed $y>0$ and $s\in [0, T)$  set
\beq \label{hnu s}
\left\{
\matrix{
\hat{\nu}^{s,y}(t):= \int_{[s,t)}\frac{C^s(u)}{f_C(u)}\,d\overline \nu^{s,y}(u), \quad t\in(s,T),\hfill\cr
\hat{\nu}^{s,y}(s):= 0, \hfill\cr
}\right.
\eeq
then\\
$~~~~~$ {\rm (i)}~ $\hat{\nu}^{s,y}$  is the unique optimal solution of  $V(s,y) := \dts\max_{\nu\in\cS_s}  \cJ_{s,y}(\nu)$,
 
 {\rm (ii)}~ $ E\{\|\hat{\nu}^{s,y}\|_T \}\leq 2K_\cJ(1+y)\,{\dts\,\max_{t \in [s, T]}}\,e^{\int_s^t\mu_F(r)\,dr}$,

 {\rm (iii)}~ if $C(T;\hat{\nu}^{s,y}) \equiv 0$ a.s. then  $y=0$, ${\hat\nu}^{s,y}\equiv 0$; moreover
\begin{eqnarray}\label{Cns}
& & E\bigg\{ \int_t^T  e^{-\int_s^u \mu_F(r)\,dr}\tR_C(y C^s(u), w(u), r(u))\frac{C^s(u)}{C^s(t)}\,du \\
& & \hspace{2.4cm} +  e^{-\int_s^T \mu_F(r)\,dr}G'(y C^s(T)) \frac{C^s(T)}{C^s(t)}\,\bigg|\,\cF_{s,t}\bigg\} 
\leq  e^{-\int_s^t \mu_F(r)\,dr}\,\frac{1}{ f_C(t)} \,\mbox{~ a.e.}, \mbox{~a.s.} \nonumber
\end{eqnarray}
 \end{proposition}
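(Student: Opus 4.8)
The plan is to establish Proposition~\ref{hnu s.thm} by transplanting the proof of \cite{CHi}, Theorem~3.1 to the present time-inhomogeneous setting, the new ingredients being the random coefficients replaced by deterministic ones (which actually simplifies matters through the $\cF_{s,t}$-measurability noted after (\ref{cap prob from s})) and the scrap value term, which is handled exactly as in the Bank--El Karoui part of the paper. First I would verify existence and uniqueness: uniqueness is immediate from the strict concavity of $\cJ_{s,y}$ on $\cS_s$ established below Proposition~\ref{cJprop}; existence follows from the estimates in Proposition~\ref{cJprop}(a)--(b), which give a priori bounds allowing one to extract, along a maximizing sequence $(\nu_n)\subset\cS_s$ with $\cJ_{s,y}(\nu_n)\to V(s,y)$, a limit point in $\cS_s$ (in the sense of weak convergence of the associated measures, using Helly-type compactness together with the $L^1$-bound (b) on $\int_{[s,T)}e^{-\int_s^t\mu_F\,du}\,d\nu_n$), with upper semicontinuity of $\cJ_{s,y}$ secured by Fatou and the usc/concavity of $\tR$ and $G$ from Assumption-[R] and Assumption-[G].

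Next I would identify the optimizer with $\hat\nu^{s,y}$ as defined in (\ref{hnu s}). The argument is the classical verification via the associated optimal stopping problem (\ref{Z s})--(\ref{v s}): one shows that $\overline\nu^{s,y}$ of (\ref{bar nu s}), being (modulo a shift) the left-continuous inverse of the nondecreasing-in-$y$ optimal stopping time $\hat\tau(s,\cdot)$ (the monotonicity recalled from \cite{BK}, Lemma~1), produces a control $\hat\nu^{s,y}$ whose first-order conditions --- the analogue of (\ref{FOC}) on $[s,T)$ --- are satisfied, so that by concavity of $\cJ_{s,y}$ it is optimal. Concretely, the supergradient of $\cJ_{s,y}$ evaluated at $\hat\nu^{s,y}$ at a stopping time $\tau$ is $\le 0$ because the stopped process $\cZ^{s,y}(\cdot\wedge\hat\tau)$ dominates $\zeta^{s,y}(\cdot\wedge\hat\tau)$, and it is $=0$ at times of strict increase of $\overline\nu^{s,y}$, which are precisely the times where $\cZ^{s,y}=\zeta^{s,y}$; the scrap value contributes the boundary term with $G'(yC^s(T))$ exactly as the non-integral term does in (\ref{supergradient}). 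This simultaneously establishes (i) and formula (\ref{v derivative V at s}) relating $v$ and $\partial_y V$ (used later), and it yields the complementarity inequality (\ref{Cns}) as the statement ``$\nabla_\nu\cJ_{s,y}(\hat\nu^{s,y})(t)\le 0$'' rewritten with $y C^s(u)$ in place of the tracked capacity in the region where $\hat\nu^{s,y}$ is flat (i.e.\ where $C^{s,y}(t;\hat\nu^{s,y})=yC^s(t)$, which holds in particular when $C^{s,y}(T;\hat\nu^{s,y})\equiv 0$, forcing $y=0$ and $\hat\nu^{s,y}\equiv 0$ as in (iii)).

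For the moment estimate (ii), I would start from Proposition~\ref{cJprop}(b): since $\hat\nu^{s,y}$ is optimal, $\cJ_{s,y}(\hat\nu^{s,y})=V(s,y)\ge\cJ_{s,y}(0)\ge 0$ (the latter because $\tR\ge 0$ on $\real^3_+$ and $G\ge 0$), so (b) applies and gives $E\{\int_{[s,T)}e^{-\int_s^t\mu_F\,du}\,d\hat\nu^{s,y}(t)\}\le 2K_\cJ(1+y)$; multiplying the integrand by $e^{\int_s^t\mu_F\,du}\le\max_{t\in[s,T]}e^{\int_s^t\mu_F\,du}$ and using $\|\hat\nu^{s,y}\|_T=\int_{[s,T)}d\hat\nu^{s,y}(t)$ (total variation equals the terminal value for a nondecreasing process started at $0$) yields (ii). The main obstacle I anticipate is not any single estimate but the bookkeeping around the scrap-value boundary term: one must be careful, just as in Proposition~\ref{First Order Cond} and Theorem~\ref{control by base capacity}, that the $\ind_{\{\tau=T\}}$ / $\ind_{\{t<T\}}$ contributions in $\zeta^{s,y}$ and in the supergradient match up, and that the passage to the limit in the existence argument does not lose the terminal $G$-term --- this is exactly where the usc of $\tR$ and the continuity of $G$ in Assumption-[R](i)--(ii) and Assumption-[G](i) are needed. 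Everything else is a routine transcription of \cite{CHi}, Section~3 and \cite{CH1}, with the random coefficients there replaced by the deterministic ones of Assumption-[det].
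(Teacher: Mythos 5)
Your overall route is the same one the paper takes: the paper gives no self-contained proof here, but points to the arguments of \cite{CHi}, Theorem~3.1, combined with the estimates of Proposition~\ref{cJprop} and the strict concavity of $\cJ_{s,y}$ on $\cS_s$, and your handling of uniqueness, of (ii) (via Proposition~\ref{cJprop}(b) and $\cJ_{s,y}(\hat\nu^{s,y})\ge \cJ_{s,y}(0)\ge 0$, then multiplying by $e^{\int_s^t\mu_F}$), and of (iii) follows that template. The abstract Helly-compactness existence step is superfluous (and itself delicate): once the first-order conditions are verified for $\hat\nu^{s,y}$, concavity yields optimality and hence existence.

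The genuine gap is at the verification itself, the only nontrivial step. First, your stated reason is backwards: $\cZ^{s,y}$ is an essential infimum, so $\cZ^{s,y}\le \zeta^{s,y}$ everywhere; what is actually used is the martingale property of the stopped envelope $\cZ^{s,y}(\cdot\wedge\hat\tau)$ together with $\cZ^{s,y}(\hat\tau)=\zeta^{s,y}(\hat\tau)$. Second, and more seriously, the single stopping problem at level $y$ cannot deliver $\nabla_\nu\cJ_{s,y}(\hat\nu^{s,y})(\tau)\le 0$: the conditional expectation appearing in the supergradient is a ``never stop'' payoff, which dominates the value $v$ from above, so $v\le 1/f_C$ gives nothing; and bounding the tracked capacity below by $yC^s(\cdot)$ (or by the level reached at $\tau$) only bounds the supergradient by a never-stop payoff, which may well exceed $e^{-\int_s^\tau\mu_F}C^s(\tau)/f_C(\tau)$ (take $y$ small under Assumption-[I]). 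What must be shown is that the expected marginal profit along the capacity that tracks the boundary, i.e.\ with argument $C^s(t)\sup_{\tau\le u<t}\hat y(u)/C^s(u)$ and the matching $G'$ term, is $\le e^{-\int_s^\tau\mu_F}C^s(\tau)/f_C(\tau)$ --- the analogue for $\hat y$ of (\ref{equation l star}) and (\ref{integral equation}). In the appendix setting this has to be extracted from the whole family of stopping problems $Z^{s,z}$, $z\ge y$, using monotonicity of $\hat\tau(s,z)$ in $z$, the martingale property of each stopped envelope and the equality $v=1/f_C$ at the boundary (the change-of-level argument of \cite{BK}, \cite{EKK} as carried out in \cite{CHi}); likewise, equality at times of increase concerns the stopping problem at the current (running-maximum) level, not $\cZ^{s,y}=\zeta^{s,y}$. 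This is exactly the content you label ``routine transcription'' and your one-line justification does not supply it. A smaller point: to use the supergradient calculus at $\hat\nu^{s,y}$ you must first know $E\{\|\hat\nu^{s,y}\|_T\}<\infty$ (so that $\cJ_{s,y}(\hat\nu^{s,y})>-\infty$); deriving (ii) only after optimality is mildly circular, so integrability should be obtained directly from the construction before the verification, with (ii) then sharpened via Proposition~\ref{cJprop}(b) as you do.
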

\begin{remark}\label{Cobb-Douglas continued}\rm
\vspace{-0.3cm} 
If $R$ is of the Cobb-Douglas type (cf. Remark \ref{Appendix of [5]}), then $\tR_C(0, wt), r(t)) = +\infty$ for any $t$, so (\ref{Cns}) fails and $C^{y}(t;\hat\nu)>0$ for all $t>0$  (see also \cite{CHi}, Remark~3.2).
{\qed \par}\end{remark}
A generalization of \cite{BK}, Proposition 2 and Theorem 3, shows that the stopping time ${\hat\tau}(s,y)$ (cf. (\ref{optimal tau s}))  is optimal for $v(s,y)$ and the value function $v(s,y)$ is {\em the shadow value of installed capital}, i.e.
\vspace{-0.2cm}
\beq\label{v derivative V at s}
v(s,y) = \frac{\partial}{\partial y} V(s,y). \vspace{-0.2cm}
\eeq

Having defined our ingredients for all $s$ and $y$, observe that if ${\hat\tau}(s,y)=s$ for $s<T$,  then $\frac{1}{f_C(s)}=\zeta^{s,y}(s) =\cZ^{s,y}(s) \leq \cZ^{s,z}(s) \leq \zeta^{s,z}(s) = \frac{1}{f_C(s)}$ for all $z<y$, since $\tR_C$ and $G'$ are non-increasing.  Hence
${\hat\tau}(s,z)=s$ for all $z<y$ and the mapping
\beq
\label{freebdy} 
\hat y (s):=\sup\{z> 0\,:\,{\hat\tau}(s,z)=s\}
\eeq
 is well defined. In essence, $\hat y (s)$ is the maximal initial capacity at time $s$ for which it is optimal to invest instantaneously.

Define the probability measure $Q_s \sim P$ by 
\[
\frac{dQ_s}{dP}=\exp\left\{\int_s^T\sigma_C^\top (t) \,dW(t) -\frac{1}{2} \int_s^T \|\sigma_C (t)\|^2 du \right\}
= C^s (T)\, e^{\int_s^T \mu_C (t) dt},
\]
then $W^{Q_s} (t) :=  W(t) - W(s)- \int_s^t \sigma_C (u) du $ is a Wiener process. Under the new probability $Q_s$ the process $Y^{s,y}(t):=y\,C^s(t)$ evolves according to the dynamics
\bey
\label{newY s}
\left\{ \begin{array}{l}
dY^{s,y}(t)=Y^{s,y}(t)\left[(\|\sigma_C(t)\|^2 -\mu_C (t))\,dt + \sigma_C^\top (t) \,dW^{Q_s} (t) \right],\qquad t\in (s,T],\\
Y^{s,y}(s)=y,
\end{array}\right.
\eey
and the ``optimal risk of not investing'' becomes
\bey
\label{vv s}
& &  \hspace{-1cm} v(s,y) = \inf_{\tau\in\U_s[s,T]}E^{Q_s}\bigg\{\int_s^\tau e^{- \int_s^u \bar\mu (r)\, dr} 
\tR_C(Y^{s,y}(u),  w(u), r(u))\,du \\
& & \hspace{3cm} + e^{-\int_s^{\tau} \bar\mu (r) \, dr} \frac{1}{f_C(\tau)} \ind_{\{\tau<T\}}
 + e^{-\int_s^{\tau} \bar\mu (r) \, dr} G'(Y^{s,y}(\tau)) \ind_{\{\tau=T\}}\bigg\} \nonumber
\eey
with $\bar\mu (t) := \mu_C (t) + \mu_F (t)\geq\ve_o>0$ (as it is in Section~\ref{model}).
Taking $\tau=s$ in (\ref{vv s}) shows that $v(s, y)\leq\frac{1}{f_C(s)}$ for all $y>0$, $s\in[0,T)$. 
Hence the strip $[0,T) \times (0,\infty)$ splits into two regions (see also \cite{CHi}),
the {\em Continuation Region} (or inaction region) where it is not optimal to invest as the capital's replacement cost strictly exceeds the shadow value of installed capital, 
\beq\label{Delta}
\Delta = \Big\{(s,y) \in [0,T) \times (0,\infty) : v(s,y) < \frac{1}{f_C (s)} \Big\},  
\eeq
and its complement, the {\em Stopping Region} (or action region),
\beq\label{Delta complement}
\Delta^c = \Big\{(s,y) \in [0,T) \times (0,\infty) : v(s,y) = \frac{1}{f_C (s)} \Big\},  
\eeq
where it is optimal to invest instantaneously.

Under our reduced production function $\tR$, time-inhomogeneous capacity process and scrap value at terminal time $T$, the following result holds  and shows that the optimal stopping time ${\hat\tau}(s,y)$ of (\ref{vv s}) may be characterized as the first time $(t,Y^{s,y}(t))$ exits the Borel set $\Delta$.  The proof is omitted as it is a generalization of  \cite{CH1}, Theorem 4.1, established for the model with constant coefficients, discount factor and conversion factor $f_C$  but without scrap value,  later on extented to the model with scrap value and additive production function in \cite{CHi}, Proposition 3.3. Similar arguments were then used in \cite{ChF}, Theorem A.1,   in the case of time-dependent coefficients, discount factor and conversion factor $f_C$, but without scrap value. In all these results the derivative of the production function w.r.t. capacity $C$ was a function of $C$ only. 
\begin{theorem}
\label{stop} 
Under Assumption-{\rm [det]}, 
the optimal stopping time of problem $(\ref{vv s})$ is characterized as 
\beq
\label{tau*bdy}
{\hat\tau}(s,y)=\inf\Big\{t\in[s,T):v(t,Y^{s,y}(t))= \frac{1}{f_C (t)}\Big\}\wedge T
\eeq
for each starting time $s\in[0,T)$ and initial state $y\in(0,\infty)$. 
\end{theorem}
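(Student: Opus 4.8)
The plan is to prove that the optimal stopping time for the problem (\ref{vv s}) coincides with the first exit time of the process $(t,Y^{s,y}(t))$ from the continuation region $\Delta$. I would follow the blueprint of \cite{CH1}, Theorem~4.1, adapted to accommodate the reduced production function $\tR_C$ (which now depends on the additional processes $w(t), r(t)$ rather than on $C$ alone) and the scrap value term $G'(Y^{s,y}(T))\ind_{\{\tau=T\}}$ at the terminal time. The overall strategy has two halves: first, show that the candidate stopping time $\tau^*(s,y):=\inf\{t\in[s,T):v(t,Y^{s,y}(t))=1/f_C(t)\}\wedge T$ is admissible and not worse than $\hat\tau(s,y)$; second, show it cannot do strictly better, using a martingale/submartingale argument on the value process evaluated along $Y^{s,y}$.

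\medskip
\noindent\textbf{Step 1 (Preliminaries on $v$).} First I would record the measurability and regularity of $(s,y)\mapsto v(s,y)$: by (\ref{vv s}), $v$ is an infimum of a family of conditional expectations, hence measurable; concavity in $y$ (Proposition~\ref{v non-increasing in y}, or directly from strict concavity of $\cJ$) and the bound $v(s,y)\le 1/f_C(s)$ give joint regularity sufficient to make $t\mapsto v(t,Y^{s,y}(t))$ a well-defined adapted process. One needs enough continuity of $v$ in $(s,y)$ — as in \cite{CH1} this follows from the uniform estimates of Proposition~\ref{cJprop} together with the continuity of the coefficients under Assumption-[det] — so that $\tau^*(s,y)$ is a genuine $\{\cF_{s,t}\}$-stopping time. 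I would also invoke the dynamic programming principle for $v$ along $Y^{s,y}$: for $\sigma\le\tau$ stopping times,
\[
v(\sigma,Y^{s,y}(\sigma)) \;\le\; E^{Q_s}\Big\{\textstyle\int_{\sigma}^{\tau} e^{-\int_\sigma^u\bar\mu(r)dr}\tR_C(Y^{s,y}(u),w(u),r(u))\,du + e^{-\int_\sigma^{\tau}\bar\mu(r)dr}v(\tau,Y^{s,y}(\tau))\Big|\cF_{s,\sigma}\Big\},
\]
with equality up to $\tau^*$. This is the key structural identity and it encodes, via the boundary value $v(t,\cdot)=1/f_C(t)$ on $\Delta^c$ and the terminal value $v(T,\cdot)=G'(\cdot)$, exactly the obstacle and terminal data of problem (\ref{vv s}).

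\medskip
\noindent\textbf{Step 2 (Upper bound $v(s,y)\le$ cost of $\tau^*$).} Stop the DPP identity at $\tau=\tau^*(s,y)$. On $\{\tau^*<T\}$ we have $v(\tau^*,Y^{s,y}(\tau^*))=1/f_C(\tau^*)$ by right-continuity of $t\mapsto v(t,Y^{s,y}(t))$ and closedness of $\Delta^c$; on $\{\tau^*=T\}$ we have $v(T,Y^{s,y}(T))=G'(Y^{s,y}(T))$. Substituting into the (equality form of the) DPP gives that the $Q_s$-expectation defining the cost of $\tau^*$ equals $v(s,y)$, hence $v(s,y)$ is attained at $\tau^*$; in particular $\tau^*(s,y)\ge\hat\tau(s,y)$ is not needed as an inequality — rather $\tau^*$ is optimal.

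\medskip
\noindent\textbf{Step 3 (Lower bound / optimality of $\tau^*$).} For the reverse direction, for an arbitrary $\tau\in\U_s[s,T]$ the supermartingale property of the discounted value process plus accumulated running cost — i.e. the $\le$ form of the DPP — shows that the cost functional of $\tau$ is $\ge v(s,Y^{s,y}(s))=v(s,y)$, with equality iff one stops before leaving $\Delta$ with zero probability of wasting the obstacle, which forces $\tau=\tau^*$ on the event where $\tau<\tau^*$ would incur a strict loss; here I would use the strict inequality $v(t,Y^{s,y}(t))<1/f_C(t)$ on $[\![s,\tau^*[\![$ together with $\bar\mu\ge\ve_o>0$ to conclude that stopping strictly before $\tau^*$ is strictly suboptimal unless it agrees with $\tau^*$. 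This combined with Step 2 identifies $\hat\tau(s,y)=\tau^*(s,y)$ (up to a null set, after passing to the rcll modification $\cZ$).

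\medskip
\noindent\textbf{Main obstacle.} The delicate point is the presence of the terminal scrap value $G'(Y^{s,y}(T))\ind_{\{\tau=T\}}$, which makes the terminal condition $v(T,\cdot)=G'(\cdot)$ rather than $v(T,\cdot)=1/f_C(T)$; consequently the obstacle and the terminal payoff need not match at $T$, and one must be careful that $\Delta^c$ is closed relative to $[0,T)\times(0,\infty)$ and that $t\mapsto v(t,Y^{s,y}(t))$ is rcll up to and including $T$ so that the stopping time $\tau^*$ hits $\Delta^c$ (rather than just approaching it). This is handled by the assumption $G'(0)f_C(T)\le 1$ in Assumption-[G](ii), which guarantees $(T^-,y)$-limits are consistent, and by the continuity in $(s,y)$ of $v$ derived from Proposition~\ref{cJprop} — exactly the ingredients flagged in the remark before the theorem that ``the derivative w.r.t.\ $C$ of the production function was only a function of $C$'' in the earlier works, whereas here $\tR_C$ also depends on the continuous processes $w,r$, so one must verify the relevant continuity/regularity estimates carry over, which they do under Assumption-[det] and Assumption-[LK].
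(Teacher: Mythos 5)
The central gap is that your ``key structural identity'' --- the dynamic programming principle for $v$ along $Y^{s,y}$, with equality up to $\tau^*$ --- is precisely the content of the theorem, not something you may simply invoke. Recall that ${\hat\tau}(s,y)$ is \emph{defined} in (\ref{optimal tau s}) as the d\'ebut of the contact set of the Snell envelope $\cZ^{s,y}$ with the obstacle $\zeta^{s,y}$. What has to be proved is the Markovian identification, for $t\in[s,T)$ a.s.,
\[
\cZ^{s,y}(t)=\int_s^t e^{-\int_s^u\mu_F(r)dr}\,C^s(u)\,\tR_C\big(yC^s(u),w(u),r(u)\big)\,du \;+\; e^{-\int_s^t\mu_F(r)dr}\,C^s(t)\,v\big(t,yC^s(t)\big),
\]
i.e.\ that the essential infimum over $\U_s[t,T]$, conditioned on $\cF_{s,t}$, can be computed by restarting the problem at the random point $(t,Y^{s,y}(t))$ and inserting the deterministic function $v$ of (\ref{vv s}) (the factor $C^t(u)=C^s(u)/C^s(t)$ supplying, via Girsanov, the measure $Q_t$). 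Once this identity is available, comparison with $\zeta^{s,y}(t)$ shows that $\{\cZ^{s,y}(t)=\zeta^{s,y}(t)\}=\{v(t,Y^{s,y}(t))=1/f_C(t)\}$ for $t<T$, and (\ref{tau*bdy}) follows immediately from (\ref{optimal tau s}); your Steps 2--3 then add nothing, and Step 3 as written is not conclusive anyway, since optimality of $\tau^*$ only yields $\tau^*\ge\hat\tau$ (the d\'ebut of the contact set is the \emph{minimal} optimal time), while the theorem asserts equality with the stopping time defined through the Snell envelope. The identification itself is exactly what the omitted proof, following \cite{CH1}, Theorem 4.1, establishes: one passes to the canonical space $\cC_0[s,T]=\cC_0[s,t]\times\cC_0[t,T]$, uses the product structure of Wiener measure, observes that for fixed $\bar\omega_1$ the map $\bar\omega_2\mapsto\tau(\bar\omega_1,\bar\omega_2)\vee t$ lies in $\U_t[t,T]$, and exploits $Y^{s,y}(u)=Y^{s,y}(t)\,C^t(u)$ with $C^t(\cdot)$ independent of $\cF_{s,t}$, the deterministic $w,r,f_C,\bar\mu$ and the scrap term passing through the conditioning; this is the same machinery replayed in the proof of Proposition \ref{v non-increasing in s}. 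Without this argument your proposal assumes the conclusion.

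Two secondary points. The joint continuity of $v$ which you claim follows ``from Proposition \ref{cJprop} and continuity of the coefficients'' is neither established in the paper nor needed: the paper deliberately avoids a priori regularity of $v$ and of $\hat y$, working with the rcll modification $\cZ^{s,y}$ so that the d\'ebut (\ref{optimal tau s}) is a well-defined stopping time; if you want $\tau^*$ to be a stopping time directly, you must either prove such regularity or, as above, deduce (\ref{tau*bdy}) from the identification. Finally, the difficulty you locate at the terminal time (and attribute to $G'(0)f_C(T)\le 1$) is not the one the paper flags: the scrap term is carried inside $\zeta^{s,y}$ exactly as in \cite{CHi}; what is new here, and what Assumption-[det] together with Assumption-[LK] neutralizes, is that $\tR_C$ depends on $w$ and $r$, which must be deterministic for the conditioning/factorization step to go through.
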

It follows that (\ref{freebdy})  is now written as 
\beq\label{hat y in function of v}
\hat y (s) =\sup \Big\{z\geq 0:  v(s, z)= \frac{1}{f_C (s)}\Big\},
\eeq
hence it is the lower boundary of the Borel set $\Delta$ in the $(s,y)$-plane; (\ref{tau*bdy}) is equivalently written as 
\beq\label{tau by hat y}
\hat\tau (s,y) =\inf\{t\in [s,T) :  [Y^{s,y}(t) -  \hat y (t)]^+ = 0 \} \wedge T
\eeq
and so, recalling $Y^{s,y}(t)=y C^s(t)$, its left-continuous inverse (\ref{bar nu s}) may be written in the form 
\beq\label{bar nu by hat y}
\left\{
\matrix{
\overline\nu^{s,y}(t):=\dts\sup_{s\leq u<t}\Big[\frac{\hat y(u)}{C^s(u)} - y \Big]^+~~\mathrm{~for~} t>s,\hfill\cr
\overline\nu^{s,y}(s) :=0. \hfill\cr
} 
\right.
\eeq
Hence $y+ \overline\nu^{s,y}(t) \geq \frac{\hat y(t)}{C^s(t)}$ a.s. (i.e. $C^{s,y}(t;\hat{\nu}^{s,y}) \geq \hat y(t)$ a.s.), and $\overline\nu^{s,y}(t) = \frac{\hat y(t) - y C^s(t)}{C^s(t)}$  at times of strict increase.


\end{document}